\newcommand{\MATLAB}{\textsc{Matlab}\xspace}
\newcommand{\sync}{\textsf{sync}\xspace}
\newcommand{\G}{\mathcal{G}}
\DeclareMathOperator{\vect}{vec}
\newcommand{\RR}{\mathbb{R}}
\newcommand{\nm}[1]{\left\|{#1}\right\|}
\newcommand{\ra}{\rightarrow}
\newcommand{\eg}{{\it e.g.}}
\newcommand{\ie}{{\it i.e.}}
\newcommand{\viz}{{\it viz.}}
\DeclareMathOperator*{\argmin}{argmin}
\DeclareMathOperator*{\argmax}{argmax}
\DeclareMathOperator{\Tr}{Tr}
\DeclareMathOperator{\Diag}{Diag}
\newcommand{\BlkDiag}{\mbox{BlkDiag}}
\newtheorem{lem}{Lemma}
\newtheorem{thm}{Theorem}
\newtheorem{defi}{Definition}
\newtheorem{cond}{Condition}
\newtheorem{prop}{Proposition}
\newtheorem{conj}{Conjecture}
\begin{document}
\title{\textbf{A Unified Approach to Synchronization Problems over Subgroups of the Orthogonal Group}}
\author{Huikang Liu\thanks{Shanghai University of Finance and Economics.  E--mail: {\tt liuhuikang@sufe.edu.cn}} \and  Man-Chung Yue\thanks{The University of Hong Kong.  E--mail: {\tt mcyue@hku.hk}} \and Anthony Man-Cho So\thanks{The Chinese University of Hong Kong.  E--mail: {\tt manchoso@se.cuhk.edu.hk}}}
\date{}
\maketitle
\begin{abstract}
The problem of synchronization over a group $\G$ aims to estimate a collection of group elements $G^*_1, \dots, G^*_n \in \G$ based on noisy observations of a subset of all pairwise ratios of the form $G^*_i {G^*_j}^{-1}$. Such a problem has gained much attention recently and finds many applications across a wide range of scientific and engineering areas. In this paper, we consider the class of synchronization problems in which the group is a closed subgroup of the orthogonal group. This class covers many group synchronization problems that arise in practice. Our contribution is fivefold. First, we propose a unified approach for solving this class of group synchronization problems, which consists of a suitable initialization step and an iterative refinement step based on the generalized power method, and show that it enjoys a strong theoretical guarantee on the estimation error under certain assumptions on the group, measurement graph, noise, and initialization. Second, we formulate two geometric conditions that are required by our approach and show that they hold for various practically relevant subgroups of the orthogonal group. The conditions are closely related to the error-bound geometry of the subgroup --- an important notion in optimization. Third, we verify the assumptions on the measurement graph and noise for standard random graph and random matrix models. Fourth, based on the classic notion of metric entropy, we develop and analyze a novel spectral-type estimator. Finally, we show via extensive numerical experiments that our proposed non-convex approach outperforms existing approaches in terms of computational speed, scalability,  and/or estimation error.
\end{abstract}

\bigskip

\noindent {\bf Keywords:} Group Synchronization, (Special) Orthogonal Matrix, Permutation Matrix, Cyclic Group, Generalized Power Method, Estimation Error, Metric Entropy, Error Bound

\bigskip

\section{Introduction}

\label{sec:intro}
In many real-world estimation problems, the signals of interest, which are commonly referred to as \emph{ground truths}, are constrained to lie in a \emph{group}.\footnote{Recall that a \emph{group} is a pair $(\mathcal{G}, \ast)$, where $\mathcal{G}$ is a set and $\ast$ is a binary operation on $\mathcal{G}$, such that (i) $\ast$ is associative (i.e., $g_1\ast(g_2\ast g_3) = (g_1\ast g_2)\ast g_3$ for all $g_1,g_2,g_3\in\mathcal{G}$); (ii) there exists an identity element ${\rm id} \in \mathcal{G}$ (i.e., $g \ast {\rm id} = {\rm id} \ast g = g$ for all $g\in\mathcal{G}$); (iii) for each $g\in\mathcal{G}$, there exists an inverse element $g^{-1}$ of $g$ (i.e., $g\ast g^{-1} = g^{-1} \ast g = {\rm id}$). For simplicity, we shall write $g_1g_2$ for $g_1\ast g_2$, where $g_1,g_2 \in \G$. Also, we shall abuse terminology and refer to $\G$ as the group.}
One such example is the \emph{group synchronization problem} (or simply \emph{synchronization problem}), in which the ground truth takes the form $G^*=(G^*_1,\dots,G^*_n)$ with $G^*_1,\ldots,G^*_n$ being elements of a group $\G$ and is to be estimated from noisy measurements of a subset of all pairwise ratios of the form $G^*_i {G^*_j}^{-1}$ that are computed using the binary operation on $\G$.
Synchronization problems have found applications across a wide range of areas, such as social science, distributed network, signal processing, computer vision, robotics, structural biology, computational genomics, and machine learning, and hence have gained much attention over the past decade.
Indeed, group synchronization problems often appear as sub-tasks in many important problems from the above areas, 
%
including community detection~\cite{cucuringu2015synchronization,abbe2016exact} where $\G$ is the Boolean group; ranking~\cite{cucuringu2016sync} and distributed clock synchronization~\cite{giridhar2006distributed} where $\G$ is the group of 2D rotations; sensor network localization and cryo-electron microscopy where $\G$ is the group of orthogonal matrices or special orthogonal matrices~\cite{cucuringu2012eigenvector,shkolnisky2012viewing}; the pose graph estimation problem~\cite{rosen2016se} where $\G$ is the group of Euclidean motions; the haplotype phasing problem~\cite{si2014haplotype,chen2018projected} where $\G$ is the cyclic group (integers with modulo arithmetics); the multi-graph matching problem~\cite{pachauri2013solving, huang2013consistent} where $\mathcal{G}$ is the group of permutations.
In most applications, after solving the synchronization problem, the estimated group elements will in turn be used to estimate another underlying signal, which is the ultimate target. In principle, one could estimate the underlying signal directly or jointly with the group elements. However, as pointed out in~\cite{bandeira2020non}, estimating the signal given the group elements is a considerably easier task and can be addressed using well-developed techniques for inverse problems. This motivates us to focus on the synchronization problem.

Numerical approaches to synchronization problems are roughly divided into three categories: Spectral-type estimators, semidefinite programming (SDP) relaxations, and non-convex approaches. In the context of synchronization problems, a spectral-type estimator was first introduced in \cite{singer2011angular} for phase synchronization (\ie, $\G $ is the group $\mathcal{SO}(2)$ of 2D rotations).
It has later been generalized to synchronization problems over other subgroups of the orthogonal group (see~\cite{bandeira2013cheeger, pachauri2013solving, arie2012global, ling2020near}) and even general compact groups~\cite{romanov2019noise}.
The main cost of computing a spectral-type estimator comes in two parts. First, the eigenvectors corresponding to the first few eigenvalues of the \emph{graph connection Laplacian}~\cite{bandeira2013cheeger} or a data matrix defined using the noisy observations are computed (see Sections~\ref{sec:problem} and \ref{sec:spectral_ini} for details). Second, a certain rounding procedure is invoked to ensure that the returned estimator lies in the feasible set $\G^n$. The major advantages of spectral-type estimators are their low computational cost and ease of implementation.

The SDP relaxation approach is mainly based on convex relaxations of either the least squares formulation or the least unsquared deviation formulation of the synchronization problem, both of which are non-convex. These convex relaxations are SDP problems (hence the name of the approach) that are obtained via the by-now classic \emph{lifting} technique (see, \eg, \cite{LMS+10}) and are shown to be tight --- the optimal solutions to the relaxed problem are also optimal for its non-relaxed counterpart --- under certain assumptions on the observation and noise models~\cite{wang2013exact,javanmard2016phase,rosen2016se,hajek2016achieving,bandeira2017tightness,bandeira2018random}. 
Recently, using results from harmonic analysis and representation theory, an SDP relaxation for general compact groups and general objective functions (as opposed to just the squared or unsquared deviation) has been derived and studied in~\cite{bandeira2020non}.
A major drawback of the SDP relaxation approach is that it does not scale well with the number $n$ of group elements to be estimated. Indeed, the approach requires solving an SDP problem with an $M\times M$ matrix decision variable, where $M$ is linear in $n$. As such, the computational cost is on the order of $n^{3.5}$~\cite[Section 6.6.3]{BN01}, which can be prohibitive when $n$ is large. One remedy of this is to devise specialized algorithms for solving the large SDP problem. For example, an alternating direction augmented Lagrangian method was developed in \cite{wang2013exact} to solve an SDP relaxation of the least unsquared deviation formulation of the synchronization problem over the special orthogonal group. However, the computational cost is still not satisfactory when $n$ is beyond a few hundreds. Moreover, the performance of the method is rather sensitive to how various parameters (\eg, step size, penalty parameter) are tuned. Another possible remedy is to solve the large SDP problem using the Burer-Monteiro heuristic~\cite{burer2003nonlinear, burer2005local}, which amounts to replacing the large semidefinite matrix variable $Z \in \mathbb{R}^{M\times M}$ by some factorization $YY^\top$ with $Y\in \mathbb{R}^{M\times f}$ and solving the resulting optimization problem with the smaller matrix variable $Y$. However, the Burer-Monteiro heuristic, being a non-convex approach, is prone to sub-optimality due to the presence of local minima, unless the factorization dimension $f$ is sufficiently large~\cite{bandeira2016low, boumal2016non, boumal2020deterministic, ling2020solving}.

In recent years, there have been attempts to solve the non-convex least squares formulation of the synchronization problem directly without relaxing it. Such a non-convex approach typically has two stages. In the first stage, a carefully designed initialization procedure is used to produce a point that is close enough to the ground truth. Then, in the second stage, an iterative procedure is used to refine the initial point. One procedure that is particularly suitable for the second stage is the \emph{generalized power method} (GPM)~\cite{journee2010generalized,luss2013conditional,liu2017discrete}. The idea of applying GPM to synchronization problems was first introduced in \cite{boumal2016nonconvex} for phase synchronization and later further developed in \cite{chen2018projected} for the joint alignment problem and in~\cite{wang2021nonconvex} for the community detection problem. In addition, the analysis of GPM in~\cite{boumal2016nonconvex} for phase synchronization was sharpened in~\cite{liu2016estimation,zhong2017near}. The advantage of the GPM-based non-convex approach is that it is much faster and more scalable than the SDP relaxation approach, as each iteration of GPM involves only matrix-vector multiplications and projections onto the group $\G$. As we shall see later, these projections can be computed efficiently for many concrete groups $\G$ of practical relevance. Also, the GPM-based approach is easy to implement and requires no parameter tuning. It is worth noting that although spectral-type estimators have been shown to be already qualitatively optimal in certain settings~\cite{boumal2016nonconvex, ling2020near}, we have observed in our experiments that by refining a spectral-type estimator using GPM, the quality of the resulting estimator can be substantially better (see Section~\ref{sec:exp}). In other words, we can greatly improve the performance of spectral-type estimators by paying a small amount of extra computational cost.

Interestingly, the procedures used to compute estimators of the ground truth can be viewed as \emph{approximation algorithms} for the non-convex formulation of the synchronization problem at hand. For instance, it is known that the least squares formulation of synchronization over the Boolean group is equivalent to the MAX-CUT problem~\cite{wang2013exact}, while that of phase synchronization is equivalent to a complex quadratic maximization problem with unit modulus constraint~\cite{bandeira2017tightness}. As such, the \emph{approximation accuracy} of various least squares estimators (\ie, the gap between the objective value attained by the estimator in question and the optimal value of the formulation) can be determined, see, \eg,~\cite{goemans1995improved,so2007approximating,trevisan2012max}. However, since an optimal solution to the non-convex formulation is in general different from the ground truth, a more relevant and faithful measure of the quality of an estimator for a synchronization problem is the \emph{estimation error}, which is defined as the deviation of the estimator from the ground truth. For synchronization over the special orthogonal group, the estimation error of various estimators has been studied in~\cite{wang2013exact,bandeira2017tightness,boumal2016nonconvex,liu2016estimation}.

It should be mentioned that when $\mathcal{G} $ is the group of rotations (\ie, special orthogonal matrices), the synchronization problem is equivalent to the problem of \emph{multiple rotation averaging}~\cite{hartley2013rotation}, and the GPM is also known as the Jacobi-type method~\cite[Section~7.4]{hartley2013rotation}. Moreover, various specialized algorithms are developed in the literature for multiple rotation averaging, such as the Weiszfeld algorithm. However, it is often difficult to extend the algorithm and/or its theory to a general subgroup of the orthogonal group. Since our paper focuses on algorithms for general subgroups, we do not go into the details of these specialized algorithms. We refer the interested reader to~\cite{hartley2013rotation} and the references therein.

Finally, we mention two relatively new yet effective approaches to general group synchronization problems, both of which fall into the category of \emph{message passing} algorithms. 
First, an \emph{approximate message passing} algorithm was derived in \cite{perry2018message} for solving synchronization over general compact groups. Based on ideas from statistical physics, the work \cite{perry2018message} provides a non-rigorous analysis on the asymptotic statistical guarantee of their algorithm in the large $n$ limit.
Second, a powerful framework has recently been developed in \cite{lerman2019robust} for removing unreliable observations in the input data to general synchronization problems by leveraging a notion called \emph{cycle-edge consistency}. 
It would be interesting to investigate both the theoretical and practical performance of our non-convex approach when combined with the framework in~\cite{lerman2019robust}. We leave this as a future work.


\subsection{Contributions}
In this paper, we consider synchronization problems over \emph{closed subgroups of the orthogonal group $\mathcal{O}(d)$}, which include, \eg, the orthogonal group $\mathcal{O}(d)$ itself, the special orthogonal group $\mathcal{SO}(d)$, the permutation group $\mathcal{P}(d)$, and the cyclic group $\mathcal{Z}_m$ (see Section~\ref{subsec:setup} for their definitions). 
We propose and analyze a non-convex approach for tackling this class of synchronization problems.
Our main contribution is fivefold. 
\begin{itemize}
\item First, we derive a \emph{master theorem} for the proposed non-convex approach. Under four assumptions that are respectively related to the subgroup, noise, measurement graph, and initialization, our master theorem establishes an upper bound on the estimation error of the iterates of GPM and hence provides performance guarantees for the proposed non-convex approach (see Theorem~\ref{thm:master}). The master theorem is applicable to general closed subgroups of the orthogonal group, measurement graphs, and noise. It also clearly reveals the roles played by these main components of a synchronization problem.

\item Second, we formulate two key geometric conditions on the subgroup that can be used to verify the assumptions in the master theorem. These conditions are closely related to the \emph{error-bound geometry} of the subgroup, which is a classic notion in optimization and plays an important role in the analysis of various iterative methods. We also prove that the two geometric conditions hold for the orthogonal group, the special orthogonal group, the permutation group, and the cyclic group, which are all practically relevant in the context of group synchronization problems.

\item Third, we study random models of the measurement graph and noise. In particular, we show that if the measurement graph is the \emph{Erd{\H{o}}s-R{\'e}nyi random graph} and the noise matrix is a random matrix with independent \emph{sub-Gaussian} entries, then the assumptions on the measurement graph and noise in the master theorem will hold with high probability when the number $n$ of target group elements is sufficiently large. This result is useful since many well-known random variables are sub-Gaussian, including the Gaussian, uniform, Bernoulli, and any bounded random variables.

\item Fourth, we develop a novel spectral-type estimator, named the \emph{entropic spectral estimator}, for our target class of synchronization problems. The entropic spectral estimator has an intimate connection to the classic geometric concept of \emph{metric entropy}. We prove that under the above-mentioned geometric conditions and random models of the measurement graph and noise, the entropic spectral estimator will satisfy the assumption on the initialization of the non-convex approach with high probability.

\item Finally, through extensive numerical experiments, we study the empirical performance of our proposed non-convex approach on synchronization problems over several subgroups of the orthogonal group. The experiment results show that the proposed approach outperforms existing ones in terms of computational speed, scalability, and/or estimation error.
\end{itemize}

Although the idea of applying GPM to solve synchronization problems over closed subgroups of the orthogonal group $\mathcal{O}(d)$ is natural in view of our earlier discussion, due to the non-commutativity of $\mathcal{O}(d)$, many of the key steps in \cite{liu2016estimation} that rely on the commutativity of $\mathcal{SO}(2)$ break down. Hence, extending the theoretical results in \cite{liu2016estimation} to synchronization problems over general subgroups of $\mathcal{O}(d)$ is highly non-trivial. Furthermore, the measurement and noise settings in this paper are significantly more general than those considered in \cite{liu2016estimation}. It should also be pointed out that for cyclic synchronization $\mathcal{Z}_m$-\sync, another non-convex approach was developed in \cite{chen2018projected}.
However, unlike the approach in \cite{chen2018projected}, the dimension of the iterates and the computational cost of our approach do not increase with $m$. Therefore, our approach is arguably more efficient.

\subsection{Organization}
The rest of the paper is organized as follows. We formally introduce the group synchronization problem and some definitions related to it in Section~\ref{sec:problem}. In Section~\ref{sec:non-convex}, we propose a unified non-convex approach for solving synchronization problems over closed subgroups of the orthogonal group. We then prove a master theorem on the performance guarantee of the proposed approach in Section~\ref{sec:master}. 
In Section~\ref{sec:verify}, we verify the conditions of the master theorem for various closed subgroups of the orthogonal group and standard random measurement graph and noise models.
Lastly, we present results on the numerical performance of the proposed approach in Section~\ref{sec:exp} and conclude the paper in Section~\ref{sec:conclusion}.

\subsection{Notation}
We use the following notation throughout the paper. For any $nd\times nd$ (resp. $nd\times d$) block matrix $Y$, we denote by $[Y]_{ij}$ (resp. $[Y]_i$) its $(ij)$-th (resp. $i$-th) $d\times d$ block. For any two matrices $X$ and $Y$, we denote their Kronecker product by $X \otimes Y$ and, if they have conformable dimensions, their inner product by $\langle X , Y\rangle = \Tr( X^\top Y)$. We use $\nm{X}$ and $\nm{X}_F$ to denote the operator norm and Frobenius norm of $X$, respectively. For any integer $k\ge 1$, we denote the $k\times k$ identity matrix by $I_k$. We use $c,c_0,c_1,\ldots$ to denote numerical constants in mathematical statements and proofs, whose values may change from appearance to appearance. For a graph with node set $[n]:= \{1,\dots, n\}$, we denote by $(i,j)$ the edge between nodes $i$ and $j$.

\section{Group Synchronization}\label{sec:problem}
\subsection{Basic Setup} \label{subsec:setup}
Let $d\ge 1$ be an integer. The basic objects in our study are the $d$-dimensional orthogonal group --- \ie, the set of $d\times d$ orthogonal matrices
\[
\mathcal{O}(d) := \left\lbrace Q\in\mathbb{R}^{d\times d} : Q Q^\top = Q^\top Q = I_d \right\rbrace
\]
with matrix multiplication as the binary operation --- and its closed subgroups --- \ie, closed subsets of $\mathcal{O}(d)$ that form a group under matrix multiplication. These include many of the groups mentioned in Section~\ref{sec:intro}, such as the orthogonal group $\mathcal{O}(d)$ itself (the Boolean group corresponds to $\mathcal{O}(1)=\{-1,+1\}$), the special orthogonal group
\[
\mathcal{SO}(d) := \left\lbrace Q\in\mathcal{O}(d) :  \det(Q) = 1 \right \rbrace,
\]
the permutation group 
\[
\mathcal{P}(d) := \mathcal{O}(d) \cap \{0,1\}^{d\times d},
\]
and the cyclic group of order $m$
\begin{equation*}
\mathcal{Z}_m := \left\lbrace \begin{bmatrix}
\cos \frac{2k\pi}{m} & - \sin \frac{2k\pi}{m}\\[1mm]
\sin \frac{2k\pi}{m} & \cos \frac{2k\pi}{m}
\end{bmatrix} : k = 0, \dots, m-1 \right\rbrace
\end{equation*}
(which is a closed subgroup of $\mathcal{O}(2)$).

Given a closed subgroup $\mathcal{G}$ of the orthogonal group, the problem of \emph{group synchronization over $\mathcal{G}$}, denoted by $\G$-\sync, is to estimate the ground truth $G^* = (G_1^*,\dots,G_n^*) \in \G^n$ based on noisy observations of the pairwise ratios
\begin{equation*}
C_{ij} \approx  G_i^* {G_j^*}^{-1} , \quad (i,j) \in  E ,
\end{equation*}
where $E  \subseteq \{ (i,j)\in [n]^2 : i < j \}$ is a collection of index pairs. We view $\mathcal{G}^n$ as a subset of $\RR^{nd\times d}$, so that $[G^*]_i = G^*_i$ for $i=1,\ldots,n$. Note that if we multiply the target elements $G_1^*,\dots,G_n^*$ by a common group element $Q\in\G$ from the right, then the resulting elements $G_1^* Q, \dots, G_n^* Q$ would yield precisely the same set of measurements since
\begin{equation}
\label{eq:1}
G_i^*Q {(G_j^* Q)}^{-1}  =  G_i^* Q Q^{-1} {G_j^* }^{-1}  = G_i^* {G_j^*}^{-1} .
\end{equation} Therefore, we can at best recover the target elements up to some unknown common transformation $Q\in \G$ from the right. This motivates us to define the \emph{estimation error} $\varepsilon(G)$ of an estimator $G \in \G^n$ as 
\begin{equation}\label{eq:est_err}
\varepsilon( G ) := \min_{Q\in \G} \| G - G^* Q \|_F.
\end{equation}
It is immediate from the definition that $\varepsilon (G) = \varepsilon (G Q')$ for any $Q'\in \G$.

The pair $([n],  E)$ forms a graph, called the \emph{measurement graph} of the synchronization problem $\G$-\sync.
There are various ways to model the noisy observations of the pairwise ratios. One way is to adopt the \emph{additive noise model} 
\[ 
C_{ij} = G_i^* {G_j^*}^{-1} + \Theta_{ij}, \quad (i,j) \in E,
\] 
where $\{\Theta_{ij}:(i,j)\in E\}$ are the \emph{noise matrices}. Such a model appears frequently in the group synchronization literature, see, \eg, \cite{javanmard2016phase,singer2011angular,pachauri2013solving,boumal2016nonconvex,liu2016estimation,   bandeira2017tightness,perry2018optimality}. Another way is to adopt the \emph{multiplicative noise model}
\begin{equation*}
C_{ij} = G_i^* {G_j^*}^{-1} \Theta_{ij}, \quad (i,j) \in E,
\end{equation*}
see, \eg,~\cite{chen2018projected,wang2013exact,romanov2019noise}. A special case of this is the so-called \emph{outlier noise model}, where $\Theta_{ij}$ ($(i,j) \in E$) is either a random element distributed uniformly (with respect to the \emph{Haar measure}) over $\G$ or the identity element of $\G$. We will report numerical results of our proposed non-convex approach for both the additive and multiplicative noise models in Section~\ref{sec:exp}.


Most existing works on computational approaches to synchronization problems assume that the noise matrices $\{\Theta_{ij}:(i,j) \in E\}$ are independent. We shall also make this assumption in our subsequent development. However, it is worth noting that such an assumption may not be ideal for recovery. Indeed, in applications such as cryo-electron microscopy~\cite{singer2020computational}, the noisy observations $\{C_{ij}:(i,j) \in E\}$ are usually estimated as maximizers of the cross-correlation, in which case the noise matrices are \emph{dependent}. For approaches that address the case of dependent noise matrices, we refer the reader to, \eg,~\cite{bendory2018bispectrum} and the references therein.

\subsection{Least Squares Formulation}
For any closed subgroup $\G$ of the orthogonal group $\mathcal{O}(d)$, since $Q^{-1} = Q^\top$ for any $Q\in \G$, we can formulate the least squares estimation problem associated with $\G$-\sync as 
\begin{equation*}
\begin{array}{c@{\quad}l}
\displaystyle\min_{G \in \RR^{nd\times d}} & \displaystyle\sum_{(i,j)\in E }\|[G]_i [G]_j^\top - C_{ij} \|_F^2 \\
\noalign{\smallskip}
\mbox{subject to} & [G]_i\in \G,\ i=1,\dots,n.
\end{array}
\end{equation*}
Any optimal solution to this problem is said to be a least squares estimator for the problem $\G$-\sync.\footnote{By the same argument as in \eqref{eq:1}, it can be seen that the least squares estimator is not unique.}
By the orthogonality of the blocks $[G]_1,\ldots,[G]_n$, we can rewrite the above problem as
\begin{equation}
\label{opt:LS}
\begin{array}{c@{\quad}l}
\displaystyle\max_{G \in \RR^{nd\times d}} & \displaystyle \Tr\left( G^\top C G \right) \\
\noalign{\smallskip}
\mbox{subject to} & G\in \G^n,
\end{array}
\end{equation}
where $C\in \RR^{nd \times nd}$ is the block matrix defined by
\begin{equation} \label{eq:def-C}
[C]_{ij} := \begin{cases}
      \hfill C_{ij},    \hfill & \text{ if } (i,j)\in  E , \\
      \hfill C_{ji}^\top, \hfill & \text{ if } (j,i) \in  E , \\
      \hfill I_d, \hfill & \text{ if } i=j, \\
      \hfill \mathbf{0},  \hfill & \text{ otherwise,}
  \end{cases}
  \quad i, j = 1,\ldots,n.
\end{equation}
Problem~\eqref{opt:LS} is non-convex and in general NP-hard, as it is equivalent to the MAX-CUT problem when $\G$ is the Boolean group $\mathcal{O}(1) = \{+1, -1\}$. A standard idea for tackling problem~\eqref{opt:LS} is to consider its SDP relaxations~\cite{arie2012global,wang2013exact,javanmard2016phase,rosen2016se,bandeira2017tightness}, which can be solved by off-the-shelf solvers in polynomial time. However, this approach does not scale well with $n$ since it requires solving an SDP problem with an $nd\times nd$ matrix variable.

\section{A Non-Convex Approach}\label{sec:non-convex}

Instead of convexification, we propose to tackle the non-convex problem~\eqref{opt:LS} directly by a two-stage approach. An important subroutine in our approach is the projection of a $d\times d$ matrix onto the group $\G$. This projection, denoted by $\Pi_\G$, is defined by
\begin{equation}\label{def:Pi}
\Pi_{\G} \left( X \right)  := \argmin_{Q \in \G} \| X - Q \|_F = \argmax_{Q \in \G} \, \langle X , Q \rangle,\quad X\in\mathbb{R}^{d\times d},
\end{equation}
so that
\begin{equation*}
\text{dist}(X, \G) := \min_{Q \in \G} \| X - Q \|_F = \| X - \Pi_{\G} (X) \|_F.
\end{equation*}
Using the projection $\Pi_{\G}$, we can map any $nd\times d$ matrix to the feasible region $\G^n$ of the problem $\G$-\sync using the \emph{block-wise projection} $\Pi_{\G}^n: \RR^{nd\times d} \ra \G^n$ given by\footnote{Strictly speaking, the maps $\Pi_{\G}$ and $\Pi^n_{\G}$ are set-valued since in general problem~\eqref{def:Pi} can have multiple minimizers. Nevertheless, all the results in this paper hold for any of the minimizers.}
\[
\left[ \Pi^n_{\G} \left( Y \right) \right]_i = \Pi_{\G} \left( [Y]_i \right),  \quad i = 1,\dots, n.
\]
When the group $\G$ is clear from the context, we omit the subscript $\G$ from the symbols $\Pi^n_{\G}$ and $\Pi_{\G}$. We record a useful property of the projection $\Pi$ before moving on.
\begin{lem}\label{lem:1}
For any $X \in \RR^{d\times d}$ and any $r > 0$, we have
\begin{equation*}
\Pi ( r \cdot X) = \Pi ( X ).
\end{equation*}
\end{lem}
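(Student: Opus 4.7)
The plan is to work with the second characterization of the projection given in \eqref{def:Pi}, namely $\Pi(X) = \argmax_{Q\in\mathcal{G}} \langle X, Q\rangle$, rather than the Frobenius-distance form. Working with the inner-product form is strictly more convenient for a scaling statement: by bilinearity of the trace inner product, $\langle \eta X, Q\rangle = \eta\, \langle X, Q\rangle$ for every $Q\in\mathcal{G}$, so scaling the feasible direction $X$ by a positive constant $\eta$ simply multiplies the objective of the maximization problem by $\eta$, a strictly increasing transformation on $\mathbb{R}$, and therefore preserves the argmax set. Chaining these equalities,
\[
\Pi(\eta X) \;=\; \argmax_{Q\in\mathcal{G}}\, \langle \eta X, Q\rangle \;=\; \argmax_{Q\in\mathcal{G}}\, \eta\,\langle X, Q\rangle \;=\; \argmax_{Q\in\mathcal{G}}\, \langle X, Q\rangle \;=\; \Pi(X),
\]
which is the claim.

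There is no real obstacle here; the only subtlety worth noting is that one should not attempt the proof directly from the distance form $\argmin_{Q\in\mathcal{G}}\|\eta X - Q\|_F$, because the expansion $\|\eta X - Q\|_F^2 = \eta^2\|X\|_F^2 - 2\eta\langle X, Q\rangle + \|Q\|_F^2$ has a $Q$-independent term that scales quadratically in $\eta$ and a $Q$-dependent term that scales only linearly, so one would still need to invoke the fact that $\|Q\|_F^2 = d$ is constant on $\mathcal{G}\subseteq\mathcal{O}(d)$ to eliminate the $\|Q\|_F^2$ term. The equivalence already recorded in \eqref{def:Pi} packages exactly this observation, so citing it is the cleanest route.
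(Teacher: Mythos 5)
Your proof is correct; the paper omits the proof of Lemma~\ref{lem:1} as straightforward, and your argument via the $\argmax$ characterization in \eqref{def:Pi} (positive scaling of the objective preserves the maximizer set) is exactly the intended one-line reasoning, with the remark about $\|Q\|_F^2=d$ correctly identifying why that characterization is the right one to use.
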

\noindent The proof of Lemma~\ref{lem:1} is straightforward and thus omitted.

We now describe the details of the two stages of our approach. The first stage aims to find a feasible point $G^0 \in \G^n$ that has a sufficiently small estimation error $\varepsilon (G^0)$. Theorem~\ref{thm:master} below provides an explicit upper bound on the estimation error $\varepsilon (G^0)$ that our non-convex approach requires. In Section~\ref{sec:spectral_ini}, we propose a novel spectral-type estimator, called the \emph{entropic spectral estimator}, for general group synchronization problems and show that under certain random models of the measurements and noise, the entropic spectral estimator satisfies the said upper bound on the estimation error with overwhelming probability. Moreover, the entropic spectral estimator can be computed efficiently. Hence, it can be used as $G^0$ in the first stage. 

In the second stage, starting with the initial point $G^0 $ obtained from the first stage, we iteratively refine the estimates using GPM, which is described below.
\begin{algorithm}[H]
\caption{Generalized Power Method (GPM) for $\G$-\sync} \label{alg:GPM}
\begin{algorithmic}[1]
\State \textbf{Input:} the matrix $C$ and an initial point $G^0$
\For {$t = 0,1,2,\dots$}
\State $G^{ t + 1 } = \Pi^n (CG^t)$
\EndFor
\end{algorithmic}
\end{algorithm}
The name ``generalized power method'' of Algorithm~\ref{alg:GPM} comes from its close resemblance to the classic power method~\cite{saad2011numerical} for computing the dominant eigenvector of a matrix. In fact, problem~\eqref{opt:LS} can be seen as a constrained eigenvalue problem. More precisely, each column $v$ of $G$ is not only normalized to a fixed length (indeed, $\nm{v}_2 = \sqrt{n}$) and orthogonal to all other columns as in the usual eigenvalue problem, but also subject to the extra constraint that the $d$-dimensional sub-vectors $v_{1:d}, v_{d+1: 2d},\dots, v_{(n-1)d+1:nd}$ are all of unit length (here, we use the \MATLAB notation $v_{k:\ell}$ to denote the sub-vector $(v_k,\cdots, v_\ell)^\top$ of $v$). Despite this resemblance, the analysis of GPM for solving group synchronization problems is highly non-trivial due to the said extra constraint.

Overall, the proposed non-convex approach enjoys several computational advantages and is very efficient. 
For the first stage, the main cost of computing the entropic spectral estimator lies in the computation of the first $d$ eigenvectors of the matrix $C$ and the generation of independent copies of uniformly random orthogonal matrices, which can be done efficiently by a host of modern eigen-solvers~\cite{saad2011numerical,liu2019quadratic} and random orthogonal matrix samplers~\cite{stewart1980efficient, genz2000methods, mezzadri2006generate}, respectively.
For the second stage, we can see from Algorithm~\ref{alg:GPM} that GPM does not require the tuning of any parameter and can be implemented extremely easily. The computational cost in each iteration consists of two parts: (i) $d$ matrix-vector multiplications for forming the product $CG^t$ and (ii) the block-wise projection $\Pi^n$, which can be decomposed into $n$ projections $\Pi$ onto the group $\G$. 
As we will see in Section~\ref{sec:group}, for the orthogonal group $\mathcal{O}(d)$, the special orthogonal group $\mathcal{SO}(d)$, and the permutation group $\mathcal{P}(d)$, the projection $\Pi$ reduces to a singular value decomposition (SVD) or a $d$-dimensional linear programming problem; for the cyclic group $\mathcal{Z}_m$, the projection $\Pi$ can be computed using a simple, explicit formula involving trigonometric functions. Thus, the two parts mentioned above are well-suited for parallelization and can be implemented efficiently for many closed subgroups of the orthogonal group.

%

\section{A Master Theorem}\label{sec:master}
In this section, we present a master theorem on the estimation performance of our proposed non-convex approach. We begin with some definitions. Let $ \bar{ E } =  E \cup \{(1,1),\dots, (n,n)\}$ and consider the \emph{extended measurement graph} $([n], \bar{ E })$. Every node in this graph has precisely one self-loop. For $i,j \in [n]$, let $w_{ij} = 1$ if either $(i,j) \in \bar{ E }$ or $(j,i) \in \bar{ E }$; otherwise let $w_{ij} = 0$. The degree $r_i$ of node $i \in [n]$ of the extended measurement graph is then given by $r_i = \sum_{j=1}^n w_{ij}$. Note that for any $i\in [n]$, we have $r_i \ge 1$ since $w_{ii} = 1$. 
Let $\Delta \in \RR^{nd \times nd}$ be the block matrix defined by
\begin{equation} \label{eq:Delta-def}
[\Delta]_{ij} := w_{ij} \left( \left[ C \right]_{ij} - \left[G^* {G^*}^\top \right]_{ij} \right) = \begin{cases}
      \hfill C_{ij} - G_i^* {G_j^*}^\top,   \hfill & \text{ if } (i,j)\in  E, \\
      \hfill C_{ji}^\top - G_i^* {G_j^*}^\top, \hfill & \text{ if } (j,i) \in  E, \\
      \hfill \mathbf{0}, \hfill & \text{ otherwise,}
  \end{cases}
  \quad i,j = 1,\ldots,n,
\end{equation}
where $C \in \RR^{nd \times nd}$ is the matrix defined in~\eqref{eq:def-C} and $G^* \in \G^n$ is the ground truth. Furthermore, let $\bar{D} := {\rm Diag}(r_1,\ldots,r_n) \in \RR^{n \times n}$, $\bar{W} \in \RR^{n \times n}$ be the matrix defined by $\bar{W}_{ij} = w_{ij}$, $i,j=1,\ldots,n$, and $e \in \RR^n$ be the all-one vector. Set $D = \bar{D} \otimes I_d \in \RR^{nd \times nd}$, $W = \bar{W} \otimes I_d \in \RR^{nd \times nd}$, and $F = ee^\top \otimes I_d \in \RR^{nd \times nd}$. We define the following parameter for the measurement graph:
\[ \kappa := \left\| D^{-1}W - \frac{1}{n}F \right\|. \]
The parameter $\kappa$ is related to the connectedness of the measurement graph $([n],E)$, see the discussion after Theorem~\ref{thm:master}.
For any $G \in \G^n$, we define 
\begin{equation}\label{def:Qt}
Q_G := \argmin_{ Q \in \G } \nm{ G - G^* Q }_F = \argmax_{Q \in \G} \, \left\langle {G^*}^\top G , Q \right\rangle  = \Pi \left( {G^*}^\top G \right),
\end{equation}
where the last equality follows from the definition of $\Pi$ in~\eqref{def:Pi}.
In particular, we have $\varepsilon (G) = \nm{ G - G^* Q_G }_F$. For simplicity, we write $Q^t = Q_{G^t}$ for all $t \ge 0$. 

The following theorem is the first main theoretical result of this paper, which provides a bound on the rate at which the estimation error of the iterates generated by the proposed non-convex approach decays under certain assumptions on the subgroup, measurement graph, noise, and initialization.

\begin{thm}[Master Theorem for GPM]
\label{thm:master}
Suppose that 
\begin{enumerate}[(i)]
\item\label{master_condi_i} 
there exists a constant $\alpha \ge 1$ such that for any $t \ge 0$,
\[ \| {G^*}^\top G^t - n\cdot Q^t\|_F \le \frac{\alpha}{2}\nm{ G^t - G^* Q^t }_F^2; \]
\item\label{master_condi_ii} $\kappa \le \tfrac{1}{32}$;
\item\label{master_condi_iii} $\nm{D^{-1} \Delta} \le \tfrac{1}{32}$ and $\nm{ \Pi^n \left( G^* + 2 D^{-1} \Delta G^* \right) - G^* }_F \le \tfrac{(\sqrt{2} -1)\sqrt{n}}{48\alpha}$; 
\item\label{master_condi_iv} $\varepsilon (G^0) \le \tfrac{\sqrt{n}}{8\alpha}$.
\end{enumerate}
Then, the iterates $\{G^t\}_{t\ge1}$ generated by GPM satisfy
\begin{equation*}
\varepsilon (G^t ) \le \left( \frac{1}{\sqrt{2}}\right)^t \varepsilon(G^0) + 6(\sqrt{2} +1)\nm{ \Pi^n \left( G^* + 2 D^{-1} \Delta G^* \right) - G^* }_F,  \quad \forall t \ge 1. 
\end{equation*}
\end{thm}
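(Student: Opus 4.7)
The plan is to prove a one-step contraction of the form $\epsilon(G^{t+1}) \le \tfrac{5}{8}\,\epsilon(G^t) + 2\nm{D^{-1}\Delta G^*}_F$, valid under the induction hypothesis $\epsilon(G^t) \le \sqrt{n}/(2\rho)$, and then iterate. Condition~(iv) supplies the base case, condition~(iii) ensures the hypothesis is preserved at each step, and summing the geometric series with ratio $5/8$ produces the factor $2\cdot(8/3) = 16/3$ in front of the noise term.

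The first step is to unpack the GPM update blockwise. Using the decomposition $[C]_{ij} = w_{ij} G^*_i (G^*_j)^\top + [\Delta]_{ij}$ and the equivariance $\Pi(RX) = R\,\Pi(X)$ for $R\in\G$ (immediate from the change of variables $Q\mapsto R^{-1}Q$ in the variational definition of $\Pi$), one obtains
\begin{equation*}
[G^{t+1}]_i \;=\; G^*_i\,\Pi\!\left(S^t_i + (G^*_i)^\top [\Delta G^t]_i\right), \qquad S^t_i \;:=\; \sum_j w_{ij}\,(G^*_j)^\top [G^t]_j.
\end{equation*}
The second step is a perturbation bound for the projection: if $X = \alpha Q_0 + Y$ with $Q_0\in\G$ and $\alpha>0$, then $\nm{\Pi(X)-Q_0}_F \le (2/\alpha)\,\nm{Y}_F$. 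This follows in one line from $\langle X,\Pi(X)\rangle \ge \langle X,Q_0\rangle$ combined with $\nm{Q_0}_F^2 = \nm{\Pi(X)}_F^2 = d$. Applying it with $\alpha=r_i$, $Q_0=Q^t$, and $Y=(S^t_i-r_iQ^t) + (G^*_i)^\top[\Delta G^t]_i$, and then aggregating across blocks via Minkowski's inequality, yields
\begin{equation*}
\nm{G^{t+1} - G^*Q^t}_F \;\le\; 2\left(\sum_i r_i^{-2}\,\nm{S^t_i - r_iQ^t}_F^2\right)^{1/2} + 2\nm{D^{-1}\Delta G^t}_F.
\end{equation*}

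The combinatorial heart of the argument is to control the radicand. Setting $X_j := (G^*_j)^\top[G^t]_j - Q^t$ so that $S^t_i - r_iQ^t = \sum_j w_{ij}X_j$, and interchanging summations, gives
\begin{equation*}
\sum_i r_i^{-2}\,\nm{S^t_i - r_iQ^t}_F^2 \;=\; \sum_{j,k}\mu_{jk}\,\langle X_j,X_k\rangle \;=\; \tfrac{1}{n}\nm{\sum_j X_j}_F^2 + \sum_{j,k}\!\left(\mu_{jk}-\tfrac{1}{n}\right)\langle X_j,X_k\rangle.
\end{equation*}
For the first piece, $\sum_j X_j = (G^*)^\top G^t - nQ^t$, and the identity $\epsilon(G^t)^2 = 2\,\Tr\!\left(nI_d - (Q^t)^\top(G^*)^\top G^t\right)$ (which uses $\nm{[G^t]_i}_F^2 = d$ from $\G\subseteq\mathcal{O}(d)$) combined with condition~(i) yields $\nm{\sum_j X_j}_F \le (\rho/2)\,\epsilon(G^t)^2$. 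For the second piece, the symmetric matrix $M := [\mu_{jk}-1/n]_{j,k}$ has operator norm at most $\kappa_1+\kappa_2 = \kappa$ (split into diagonal and off-diagonal parts, and bound the latter by its maximum absolute row sum), so the piece is at most $\kappa\sum_j\nm{X_j}_F^2 = \kappa\,\epsilon(G^t)^2$; the last equality uses $\sum_j\nm{X_j}_F^2 = \nm{G^t - G^*Q^t}_F^2$, which follows from the orthogonality of each $G^*_j$. The noise term is handled by $\nm{D^{-1}\Delta G^t}_F \le \nm{D^{-1}\Delta G^*}_F + \nm{D^{-1}\Delta}\,\epsilon(G^t)$, using the right-invariance of the Frobenius norm under $Q^t\in\G$.

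Finally, combining these ingredients with $\sqrt{a+b}\le\sqrt a+\sqrt b$ and substituting $\sqrt\kappa\le 1/32$, $\nm{D^{-1}\Delta}\le 1/32$, and the induction hypothesis $\rho\,\epsilon(G^t)/\sqrt n\le 1/2$, the coefficient on $\epsilon(G^t)$ accumulates as $\tfrac{1}{2}+\tfrac{1}{16}+\tfrac{1}{16} = \tfrac{5}{8}$ and the additive contribution is $2\nm{D^{-1}\Delta G^*}_F$. Condition~(iii) then gives $\epsilon(G^{t+1}) \le \tfrac{5}{8}\cdot\sqrt n/(2\rho) + \sqrt n/(16\rho) = (3/8)\sqrt n/\rho < \sqrt n/(2\rho)$, preserving the induction hypothesis. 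Iterating the contraction from $G^0$ and summing $\sum_{k\ge 0}(5/8)^k = 8/3$ produces the stated bound. I expect the main obstacle to be the radicand-control step: reshuffling the double sum through $\mu_{jk}$, proving the operator-norm bound $\nm{M}\le\kappa$ on the associated symmetric matrix, and apportioning the numerical constants tightly enough for the contraction ratio to land exactly at $5/8$ rather than something larger.
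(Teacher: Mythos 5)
Your proposal is correct and follows essentially the same route as the paper: a one-step contraction obtained from a blockwise projection-perturbation bound, the reshuffling of $\sum_i r_i^{-2}\|S^t_i - r_iQ^t\|_F^2$ into $\sum_{j,k}\mu_{jk}\langle X_j,X_k\rangle$, the split into the $1/n$-average piece (controlled by condition~(i) via the identity $\epsilon(G^t)^2 = 2\,\Tr(nI_d - (Q^t)^\top (G^*)^\top G^t)$) plus a $\kappa$-sized correction, and geometric iteration. The paper packages the projection bound as a standalone lemma (Lemma~\ref{lem:proj_contraction}) via the triangle inequality and then works with the global norm $\|D^{-1}CG^t - G^*Q^t\|_F$; you rederive the same $2/\alpha$ bound from the variational inequality $\langle X,\Pi(X)\rangle \ge \langle X,Q_0\rangle$ together with $\|\Pi(X)-Q_0\|_F^2 = 2(d - \langle Q_0,\Pi(X)\rangle)$, and organize the estimate blockwise via Minkowski --- both are equivalent. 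One small imprecision to flag: the operator norm of $M = [\mu_{jk}-1/n]_{j,k}$ is \emph{not} in general bounded by $\kappa_1+\kappa_2$, because the diagonal block contributes $\max_j|\mu_{jj}-1/n|$, which can exceed $\kappa_2 = \max_j(\mu_{jj}-1/n)$ when some $\mu_{jj} < 1/n$. What you actually need (and what the paper implicitly uses) is only the one-sided bound $M \preceq \kappa I$, equivalently $\lambda_{\max}(M) \le \kappa$, which does hold: $M_{\mathrm{diag}} \preceq \kappa_2 I$ and $\|M_{\mathrm{off}}\| \le \kappa_1$ by Gershgorin. Since you are upper-bounding a quadratic form rather than estimating it in absolute value, replacing ``operator norm'' by ``largest eigenvalue'' fixes the statement with no other change.
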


\noindent Let us elaborate on conditions~\eqref{master_condi_i}--\eqref{master_condi_iv} in and the implications of Theorem~\ref{thm:master} before giving its proof.
Condition~\eqref{master_condi_i} reflects the geometry of the subgroup $\G$ through the projection map $\Pi_{\G}$. Indeed, as we shall see in Section~\ref{sec:group}, it can be interpreted as an \emph{error bound condition}, which provides a measure of proximity of points in the convex hull of $\G$ to $\G$ itself. We will also show in Section~\ref{sec:group} that condition (i) holds with $\alpha = 1$ when $\G$ is the orthogonal group $\mathcal{O}(d)$, the special orthogonal group $\mathcal{SO}(d)$, or the permutation group $\mathcal{P}(d)$. We conjecture that condition~\eqref{master_condi_i} actually holds for arbitrary closed subgroups of the orthogonal group, see Conjecture~\ref{conj:group}. 
Condition~\eqref{master_condi_ii} is related to the measurement graph $([n],  E )$ and represents the requirement that the measurement graph needs to be sufficiently connected. In particular, we have $\kappa = 0$ when the measurement graph is complete (\ie, $w_{ij} = 1$ for all $1\le i<j\le n$). 
Condition~\eqref{master_condi_iii} depends on normalized deviation matrix $D^{-1}\Delta$ and, roughly speaking, requires that the noise in the measurements cannot be too large. This condition suggests that when we quantify the information contained in the observations $\{ C_{ij}: (i,j) \in E\}$, we should normalize the deviations matrices $\{ \Delta_{ij}: (i,j) \in E\}$ by the degrees $\{ r_i: i \in [n]\}$ of the nodes in the (extended) measurement graph. This is consistent with our intuition that under the same level of noise, more information is available at node $i$ if there are more observations related to $G^*_i$. For the special case of $\mathcal{SO}(2)$-\sync with a complete measurement graph, a similar condition is used in the analysis of the SDP relaxation approach, see \cite[Definition 3.1]{bandeira2017tightness}. 
Condition~\eqref{master_condi_iv} captures the requirement that GPM needs to be initialized with a point $G^0$ of sufficiently small estimation error. 

Under the setting of Theorem~\ref{thm:master}, we see that any accumulation point $G^\infty$ of the sequence of iterates $\{G^t\}_{t\ge0}$ generated by GPM satisfy
\begin{equation*}\label{ineq:19}
\varepsilon (G^\infty) \le 6(\sqrt{2} +1)\nm{ \Pi^n \left( G^* + 2 D^{-1} \Delta G^* \right) - G^* }_F,
\end{equation*}
which implies that, in the absence of noise (\ie, $\Delta = \mathbf{0}$), every accumulation point of the sequence $\{G^t\}_{t\ge0}$ is, up to some common transformation, equal to the ground truth $G^*$. In Section~\ref{sec:est_err}, we will show that under standard models of noise and measurement graph, the estimation error $\nm{ \Pi^n \left( G^* + 2 D^{-1} \Delta G^* \right) - G^* }_F$ achieved by our approach is nearly optimal for both continuous and discrete subgroups of the orthogonal group.

We should also point out that the numerical constants in the statement of Theorem~\ref{thm:master} are not important, as they can be improved by bootstrapping the analysis. The key message conveyed by Theorem~\ref{thm:master} is that the estimation error of the iterates produced by the suitably initialized GPM decreases at least geometrically to some quantity characterized by the subgroup, measurement graph and noise.

Note that Theorem~\ref{thm:master} does not guarantee the convergence of the sequence $\{G^t\}_{t\ge0}$ generated by GPM. In the recent paper~\cite{ling2020improved}, which appeared after this paper was posted on arXiv, Ling considered the problem $\mathcal{O}(d)$-\sync under the setting of complete measurement graph and additive Gaussian noise and showed that if the standard deviation of the noise is on the order of $\frac{\sqrt{n}}{\sqrt{d}(\sqrt{d}+\sqrt{\log n})}$, then GPM, when initialized by a spectral estimator, will generate iterates that converge linearly to a maximum likelihood estimator (which coincides with an optimal solution to the non-convex least squares formulation) with high probability. However, whether such a result can be extended to settings involving other closed subgroups of $\mathcal{O}(d)$ or more general measurement graphs or deterministic noise models remains open.

\subsection{Proof of Theorem~\ref{thm:master}}

We first establish an useful inequality for the projection map.
\begin{lem}\label{lem:projxy}
For any $X, Y\in \mathbb{R}^{d\times d}$, we have
\begin{equation*}\label{ineq:projxy}
\nm{\Pi \left( X + \Pi(X) + Y \right) - \Pi \left( X \right) }_F \le 2 \nm{ Y }_F.
\end{equation*}
\end{lem}

\begin{proof}
Let $ Q = \Pi \left( X + \Pi(X) + Y \right)$. We have
\begin{align*}
    \nm{ X + \Pi(X) + Y - Q }_F^2 \leq \nm{ X + \Pi(X) + Y - \Pi(X) }_F^2.
\end{align*}
Simply algebraic manipulation on the last inequality yields
\begin{align}\label{ineq:proj_Q}
   \left\langle Y, Q - \Pi(X) \right\rangle \ge \left\langle X + \Pi(X), \Pi(X) - Q \right\rangle.
\end{align}
Similarly, we also have 
\[ \| X - \Pi(X) \|_F^2 \le \| X - Q \|_F^2 , \]
which implies that
\begin{equation}\label{ineq:31}
\left\langle X, \Pi(X) - Q \right\rangle \geq 0 .
\end{equation}
Therefore,
\begin{align*}
& \|Y\|_F \cdot \|Q - \Pi(X)\|_F 
\ge  \left\langle Y, Q - \Pi(X) \right\rangle 
\ge \left\langle X + \Pi(X), \Pi(X) - Q \right\rangle \\
\ge & \left\langle \Pi(X), \Pi(X) - Q \right\rangle = \left\langle \Pi(X), \Pi(X) - Q \right\rangle = d - \left\langle \Pi(X), Q \right\rangle = \frac{1}{2}\nm{\Pi(X)- Q}_F^2 ,
\end{align*}
where the first inequality follows from the Cauchy-Schwarz inequality, the second from~\eqref{ineq:proj_Q}, and the third from~\eqref{ineq:31}. If $\|Q - \Pi(X)\|_F = 0$, then $\Pi \left( X + \Pi(X) + Y \right) = \Pi(X)$ and hence the desired inequality holds trivially. If $\|Q - \Pi(X)\|_F \neq 0$, then we have arrived at
\[ \|Y\|_F \ge \frac{1}{2}\nm{\Pi(X)- Q}_F, \]
which completes the proof.
\end{proof}

Several interesting consequences are immediate from Lemma~\ref{lem:projxy}. First, by taking $Y$ to be the zero matrix, we see that for any $X\in \RR^{d\times d}$,
\[ \Pi(X + \Pi(X) ) =  \Pi(X).\]
Second, by using the triangle inequality and Lemma~\ref{lem:projxy}, we can easily get that for any $X,Y\in\RR^{d\times d}$ and $Q\in\G$,
\begin{equation}\label{ineq:sum_proj}
\nm{\Pi \left( X + Y \right) - Q }_F  \le 2 \nm{  Y - Q }_F + 3\nm{   \Pi \left( X \right)  - Q }_F .
\end{equation}
The inequality~\eqref{ineq:sum_proj} will be used in the proof of the master theorem. Moreover, if we set $X = r Q$ for $r>0$ in~\eqref{ineq:sum_proj} and take the limit $r \to 0$, then we have
\begin{equation}\label{ineq:contraction}
\nm{\Pi \left( Y \right) - Q }_F \le 2 \nm{   Y  - Q }_F.
\end{equation}
The inequality~\eqref{ineq:contraction} was established in \cite[Proposition 3.3]{liu2016estimation} for the special case of $\mathcal{SO}(2)$ via a completely different proof. Therefore, Lemma~\ref{lem:projxy} is not only a strengthened but also a more general version of \cite[Proposition 3.3]{liu2016estimation}. Since \cite[Proposition 3.3]{liu2016estimation} has already been applied in a number of works to study phase synchronization problems~\cite{zhong2017near} or even other estimation problems~\cite{fanuel2020denoising,tyagi2020error}, we believe that Lemma~\ref{lem:projxy} will find further applications in synchronization or other estimation/optimization problems over general subgroups of the orthogonal group.

The proof of Theorem~\ref{thm:master} also relies on the following technical lemma:
\begin{lem}\label{lem:2}
Let $\alpha \ge 1$ be given.
If for some $t \ge 0$, 
\[\nm{ {G^*}^\top G^t - n\cdot Q^t}_F \le \frac{\alpha}{2}\nm{ G^t - G^* Q^t }_F^2,\] 
then
\begin{equation*}
\varepsilon (G^{t+1} ) \le 4\sqrt{2}  \left( \frac{\alpha \cdot \varepsilon(G^t) }{2\sqrt{n}} + \kappa + \nm{ D^{-1} \Delta }\right) \varepsilon(G^t) + 3\sqrt{2} \nm{ \Pi^n \left( G^* + 2D^{-1} \Delta G^* \right) - G^* }_F .
\end{equation*}
\end{lem}

\begin{proof}
By the definition of $G^{t+1}$ and Lemma~\ref{lem:1}, we have
\begin{equation*}
\varepsilon ( G^{t+1} ) =  \min_{Q\in \G } \| G^{t+1} - G^* Q \|_F \le   \| G^{t+1} - G^* Q^t \|_F =  \| \Pi^n \left( 2 D^{-1}CG^t \right) - G^* Q^t \|_F.
\end{equation*}
Using~\eqref{ineq:sum_proj} with $Q = G^*_i Q^t$, 
\begin{align*}
X  = [G^* Q^t + 2D^{-1} \Delta G^*Q^t]_i ,    
\end{align*}
and
\begin{align*}
Y  =  [G^* Q^t + 2\left( D^{-1} ( C - \Delta ) G^t - G^*Q^t \right) + 2D^{-1} \Delta ( G^t - G^* Q^t )]_i
\end{align*}
for $i \in [n]$, we get
\begin{equation}
\label{ineq:32}
\begin{split}
& \,\varepsilon ( G^{t+1} ) \\
\le &\, 2\sqrt{2} \nm{ G^* Q^t + 2\left( D^{-1} ( C - \Delta ) G^t - G^*Q^t \right) + 2D^{-1} \Delta ( G^t - G^* Q^t ) - G^* Q^t }_F \\
& \,\quad+ 3\sqrt{2} \nm{ \Pi^n \left( G^* + 2D^{-1} \Delta G^* \right) - G^* }_F \\
= &\, 4\sqrt{2}  \nm{D^{-1} ( C - \Delta ) G^t - G^*Q^t + D^{-1} \Delta ( G^t - G^* Q^t ) }_F \\
&\,\quad + 3\sqrt{2} \nm{ \Pi^n \left( G^* + 2D^{-1} \Delta G^* \right) - G^* }_F \\
\le & \, 4\sqrt{2} \nm{ D^{-1} ( C - \Delta ) G^t - G^*Q^t }_F + 4\sqrt{2} \nm{ D^{-1} \Delta }\cdot \varepsilon(G^t) \\
&\,\quad + 3\sqrt{2} \nm{ \Pi^n \left( G^* + 2D^{-1} \Delta G^* \right) - G^* }_F .
\end{split}
\end{equation}
Using the definitions of $C$ in~\eqref{eq:def-C}and $\Delta$ in~\eqref{eq:Delta-def}, it is easy to verify that $[C - \Delta]_{ij} = w_{ij} G_i^* {G^*_j}^\top$ for $i,j \in [n]$. It follows that
\begin{equation}\label{ineq:3}
\begin{split}
&\, \nm{ D^{-1} ( C - \Delta ) G^t - G^*Q^t }_F \\
=& \, \nm{ D^{-1} \cdot \BlkDiag(G_1^*,\ldots,G_n^*) \cdot W \cdot \BlkDiag\left( {G^*_1}^\top,\ldots,{G^*_n}^\top \right) \cdot G^t - G^*Q^t }_F \\
=& \, \nm{ \BlkDiag(G_1^*,\ldots,G_n^*) \left( (D^{-1}W) \cdot \BlkDiag\left( {G^*_1}^\top,\ldots,{G^*_n}^\top \right) \cdot G^t - e \otimes Q^t \right) }_F \\
=&\, \nm{ D^{-1}W \left( \BlkDiag\left( {G^*_1}^\top,\ldots,{G^*_n}^\top \right) \cdot G^t - e \otimes Q^t \right) }_F \\
\le&\, \frac{1}{n} \nm{ F\left( \BlkDiag\left( {G^*_1}^\top,\ldots,{G^*_n}^\top \right) \cdot G^t - e \otimes Q^t \right) }_F \\
&\quad+ \nm{ \left( D^{-1}W - \frac{1}{n}F \right) \left( \BlkDiag\left( {G^*_1}^\top,\ldots,{G^*_n}^\top \right) \cdot G^t - e \otimes Q^t \right) }_F \\
\le& \, \frac{1}{\sqrt{n}} \nm{ {G^*}^\top G^t - n \cdot Q^t }_F + \kappa \cdot \nm{ G^t - G^*Q^t }_F \\
\le& \, \left( \frac{\alpha \cdot \varepsilon (G^t)}{2\sqrt{n}} + \kappa \right) \varepsilon (G^t),
\end{split}
\end{equation}
where the second equality is due to the fact that $D^{-1}$ and $\BlkDiag(G_1^*,\ldots,G_n^*)$ commute and the identity
\[ G^*Q^t = \BlkDiag(G_1^*,\ldots,G_n^*) \cdot (e \otimes Q^t), \]
the third equality is due to the fact that $D^{-1}W(e \otimes Q^t) = e \otimes Q^t$, the second-to-last inequality is due to the identities
\begin{align*}
F\left( \BlkDiag\left( {G^*_1}^\top,\ldots,{G^*_n}^\top \right) \cdot G^t - e \otimes Q^t \right) &= e \otimes \left( {G^*}^\top G^t - n \cdot Q^t \right), \\
\nm{ \BlkDiag\left( {G^*_1}^\top,\ldots,{G^*_n}^\top \right) \cdot G^t - e \otimes Q^t }_F &= \nm{ G^t - \BlkDiag\left( {G^*_1},\ldots,{G^*_n} \right) \cdot (e \otimes Q^t) }_F \\
&= \nm{ G^t - G^*Q^t }_F,
\end{align*}
and the last inequality is due to the assumption of the lemma and the definition of $Q^t$. The desired result now follows by putting~\eqref{ineq:32} and~\eqref{ineq:3} together.
\end{proof}

Armed with Lemma~\ref{lem:2}, we can now prove the master theorem.
\begin{proof}[Proof of Theorem~\ref{thm:master}]
We first show by induction that for any $t\ge 0$,
\begin{equation}\label{ineq:8}
\varepsilon(G^t) \le \frac{\sqrt{n}}{8\alpha}  .
\end{equation}
For $t = 0$, this follows directly from the supposition that $\varepsilon (G^0) \le \tfrac{\sqrt{n}}{8\alpha} $. Next, we assume that $\varepsilon (G^t) \le \tfrac{\sqrt{n}}{8\alpha} $ for some $t \ge 0$.
By Lemma~\ref{lem:2}, conditions~\eqref{master_condi_ii}--\eqref{master_condi_iii}, and the inductive hypothesis, 
\begin{equation*}
\begin{split}
\varepsilon (G^{t+1} ) &\le 4\sqrt{2} \left( \frac{\alpha \cdot\varepsilon(G^t) }{2\sqrt{n}} + \kappa + \nm{ D^{-1} \Delta }\right) \varepsilon(G^t) + 3\sqrt{2} \nm{ \Pi^n \left( G^* + 2 D^{-1} \Delta G^* \right) - G^* }_F \\
& \le 4\sqrt{2} \left( \frac{1}{16} + \frac{1}{32} + \frac{1}{32} \right) \frac{\sqrt{n}}{8\alpha} + \frac{(2 -\sqrt{2})\sqrt{n}}{16 \alpha} = \frac{\sqrt{n}}{8 \alpha},
\end{split}
\end{equation*}
which yields~\eqref{ineq:8}.
Using Lemma~\ref{lem:2}, conditions~\eqref{master_condi_ii}--\eqref{master_condi_iii} and inequality~\eqref{ineq:8}, we have that
\begin{equation*}
\begin{split}
\varepsilon (G^{t+1} ) &\le 4\sqrt{2} \left( \frac{\alpha\cdot\varepsilon(G^t) }{2\sqrt{n}} + \kappa+ \nm{ D^{-1} \Delta }\right) \varepsilon(G^t) + 3\sqrt{2} \nm{ \Pi^n \left( G^* + 2 D^{-1} \Delta G^* \right) - G^* }_F\\
& \le \frac{1}{\sqrt{2}} \cdot \varepsilon(G^t) + 3\sqrt{2} \nm{ \Pi^n \left( G^* + 2 D^{-1} \Delta G^* \right) - G^* }_F \\
& \le \frac{1}{\sqrt{2}} \cdot \left( \frac{1}{\sqrt{2}} \cdot \varepsilon(G^{t-1}) + 3\sqrt{2} \nm{ \Pi^n \left( G^* + 2 D^{-1} \Delta G^* \right) - G^* }_F \right) \\
& \quad + 3\sqrt{2} \nm{ \Pi^n \left( G^* + 2 D^{-1} \Delta G^* \right) - G^* }_F \\
& = \left( \frac{1}{\sqrt{2}} \right)^2 \varepsilon(G^{t-1}) + \left( 1 + \frac{1}{\sqrt{2}} \right) 3\sqrt{2} \nm{ \Pi^n \left( G^* + 2 D^{-1} \Delta G^* \right) - G^* }_F \\
& \le \left(\frac{1}{\sqrt{2}}\right)^{t+1} \varepsilon(G^0) + \left( 1 + \frac{1}{\sqrt{2}} + \left(\frac{1}{\sqrt{2}}\right)^2 + \cdots  \right) 3\sqrt{2} \nm{ \Pi^n \left( G^* + 2 D^{-1} \Delta G^* \right) - G^* }_F \\
& = \left( \frac{1}{\sqrt{2}}\right)^{t+1} \varepsilon(G^0) + 6(\sqrt{2} +1)\nm{ \Pi^n \left( G^* + 2 D^{-1} \Delta G^* \right) - G^* }_F ,
\end{split}
\end{equation*}
which completes the proof.

\end{proof}

In the next section, we show that condition~\eqref{master_condi_i} in Theorem~\ref{thm:master} is satisfied by various closed subgroups of the orthogonal group, while conditions~\eqref{master_condi_ii} and~\eqref{master_condi_iii} are satisfied by certain random measurement graph and random noise models. Moreover, we propose a novel spectral-type estimator and show that for the said subgroups and under the said random measurement graph and noise models, condition~\eqref{master_condi_iv} in Theorem~\ref{thm:master} is satisfied by the proposed estimator. These results demonstrate the utility and power of Theorem~\ref{thm:master}.


\section{Verifying the Conditions of the Master Theorem} \label{sec:verify}
\subsection{Error-Bound Geometry of the Subgroup $\G$}\label{sec:group}
In order to apply the master theorem in the last section, the subgroup $\G$ has to satisfy certain geometric conditions. In this section, we formulate these conditions and verify them for four specific subgroups, namely the orthogonal group $\mathcal{O}(d)$, the special orthogonal group $\mathcal{SO}(d)$, the permutation group $\mathcal{P}(d)$, and the cyclic group $\mathcal{Z}_m$. Along the way, we show that the projection maps associated with these subgroups can be computed in a tractable manner.

To begin, let $\text{conv}(\G)$ denote the Euclidean convex hull of the subgroup $\G$ in $\mathbb{R}^{d\times d}$ (not the geodesic convex hull on $\mathcal{O}(d)$).
It is easy to check that a point $X\in \text{conv}(\G)$ lies in the subgroup $\G$ if and only if any (and hence all) of the following three quantities vanishes: 
\[ \text{\normalfont{dist}}(X, \G),\quad  \Tr ( I_d - X^\top \Pi_{\G} (X) ),\quad d - \nm{ X }_F^2.\]
Therefore, these quantities can serve as proximity measures to $\G$ for points in $\text{conv}(\G)$. We now investigate the connection between our non-convex approach to the geometry of $\G$, as reflected through the ratios of these proximity measures.
The precise geometric conditions are given as follows.
\begin{cond}\label{cond:alpha}
There exists a constant $\alpha \ge 1$ such that
\begin{equation*}
\text{\normalfont{dist}}(X, \G) \le  \alpha  \Tr ( I_d - X^\top \Pi_{\G} (X) ), \quad \forall X\in \rm{conv}(\G).
\end{equation*}
\end{cond}

\begin{cond}\label{cond:beta}
There exists a constant $\beta \in (0,1]$ such that
\begin{equation*}
\beta \Tr ( I_d - X^\top \Pi_{\G} (X) ) \le   d - \nm{ X }_F^2, \quad \forall X\in \rm{conv}(\G).
\end{equation*}
\end{cond}
The above two conditions play an important role in our development.
Indeed, if we take $X = \tfrac{1}{n}{G^*}^\top G$, then Condition~\ref{cond:alpha} immediately implies that
\[ \nm{ {G^*}^\top G - n \cdot Q_G}_F \le \alpha \Tr\left(n\cdot I_d - Q_G^\top {G^*}^\top G \right)  = \frac{\alpha}{2} \nm{ G - G^* Q_G }_F^2, \]
which is precisely condition~\eqref{master_condi_i} in the master theorem. The usefulness of Condition~\ref{cond:beta} will become clear when we study the initialization for GPM in Section~\ref{sec:spectral_ini}.

It is worth noting that Conditions~\ref{cond:alpha} and~\ref{cond:beta} are reminiscent of \emph{error bound conditions}, which have been extensively studied in the optimization literature and applied to analyze the convergence rates of various iterative methods, see, \eg,~\cite{zhou2017unified, liu2019quadratic,yue2019quadratic, yue2019family,wang2021linear, yang2022variance} and the references therein. Roughly speaking, an error bound condition postulates that the distance function associated with some target set (often difficult to characterize theoretically and not computable in practice) is bounded above by a continuous surrogate function (often easier to characterize theoretically and compute in practice) that vanishes on the target set. In the context of synchronization problems, such a condition was first introduced in~\cite{liu2016estimation} to study the optimization performance of GPM.

The main result of this subsection is summarized in the following theorem, which will be proved 
in a case-by-case manner.
\begin{thm}\label{thm:group}
For $\G=\mathcal{O}(d)$ or $\G=\mathcal{SO}(d)$, the projection $\Pi_{\G}$ can be computed in closed form via the SVD of a $d\times d$ matrix; for $\G = \mathcal{P}(d)$, the projection $\Pi_{\G}$ can be computed by solving a $d$-dimensional linear programming problem; for $\G = \mathcal{Z}_m$, the projection $\Pi_{\G}$ can be computed in closed form via the formula in Proposition~\ref{prop:cyclic_proj}. Moreover, Conditions~\ref{cond:alpha} and \ref{cond:beta} hold for the groups $\mathcal{O}(d)$, $\mathcal{SO}(d)$, $\mathcal{P}(d)$, and $\mathcal{Z}_m$ with 
\begin{equation*}
\alpha = 
\begin{cases}
1, &\text{if } \G = \mathcal{O}(d), \, \mathcal{SO}(d), \, \text{or } \mathcal{P}(d), \\
1, &\text{if } \G = \mathcal{Z}_m \text{ with } m = 1,2,\\
\left( \sqrt{2}\sin \tfrac{\pi}{m} \right)^{-1}, &\text{if } \G = \mathcal{Z}_m \text{ with } m \ge 3
\end{cases}
\end{equation*}
and
\begin{equation*}
\beta = 
\begin{cases}
1, &\text{if } \G = \mathcal{O}(d),\\
\frac{1}{2}, &\text{if } \G = \mathcal{SO}(d),\\
\frac{2}{d}, &\text{if } \G = \mathcal{P}(d),\\
1, &\text{if } \G = \mathcal{Z}_m \text{ with } m = 1,\\
\sin^2 \frac{\pi}{m}, &\text{if } \G = \mathcal{Z}_m \text{ with } m \ge 2.
\end{cases}
\end{equation*}
\end{thm}
Motivated by Theorem~\ref{thm:group}, we formulate the following conjecture:
\begin{conj}\label{conj:group}
Let $\G$ be any closed subgroup of the orthogonal group $\mathcal{O}(d)$. Then, there exist constants $\alpha \ge 1$ and $\beta\in (0,1]$ such that
\begin{equation*}\label{ineq:conj}
\frac{\text{\normalfont{dist}}(X, \G)}{\alpha} \le  \Tr ( I_d - X^\top \Pi_{\G} (X) ) \le \frac{d - \nm{ X }_F^2 }{\beta}, \quad \forall X\in \rm{conv}(\G).
\end{equation*} 
\end{conj}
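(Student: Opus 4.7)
The plan is to exploit the left $\G$-invariance of inequality~(\ref{ineq:conj}) to reduce it to a canonical form, and then to combine a compactness argument for $X$ bounded away from $\G$ with a Lie-theoretic local analysis near the identity.

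First I would reduce to the case $\Pi_\G(X) = I_d$. Left multiplication by any fixed element of $\G$ is a Frobenius isometry of $\RR^{d\times d}$ that permutes $\G$ and hence preserves both $\mathrm{conv}(\G)$ and the projection $\Pi_\G$. Setting $Q := \Pi_\G(X)$ and $Y := Q^\top X$, one checks from the definition \eqref{def:Pi} that $\Pi_\G(Y) = I_d$, $\dist(Y, \G) = \dist(X, \G)$, and $\Tr(I_d - X^\top Q) = d - \Tr Y$. Thus (\ref{ineq:conj}) is equivalent to showing, for some $\rho \ge 1$,
\begin{equation*}
\nm{Y - I_d}_F \le \rho\,(d - \Tr Y) \quad \forall\, Y \in \mathcal{S}, \qquad \mathcal{S} := \{Y \in \mathrm{conv}(\G) : \Pi_\G(Y) = I_d\}.
\end{equation*}
The set $\mathcal{S}$ is compact, and since $\Tr P \le d$ for $P \in \mathcal{O}(d)$ with equality only at $P = I_d$, any $Y \in \mathrm{conv}(\G) \setminus \{I_d\}$ satisfies $\Tr Y < d$.

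Next I would treat $Y \in \mathcal{S}$ bounded away from $I_d$ via compactness: for any fixed $\delta > 0$, the continuous function $\nm{Y - I_d}_F/(d - \Tr Y)$ attains a finite maximum on the compact set $\mathcal{S}_\delta := \{Y \in \mathcal{S} : \nm{Y - I_d}_F \ge \delta\}$. For $Y \in \mathcal{S}$ close to $I_d$, I would invoke the Lie algebra $\mathfrak{g}$ of $\G$. The first-order optimality of $I_d$ along the curve $t \mapsto \exp(tA)$ with $A \in \mathfrak{g}$ yields $\Tr(AY) = 0$, so the antisymmetric part of $Y$ is Frobenius-orthogonal to $\mathfrak{g}$; the second-order condition gives $\Tr(A^2 Y) \le 0$ for all $A \in \mathfrak{g}$. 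Together with the constraint $Y \in \mathrm{conv}(\G) \subseteq \mathrm{conv}(\mathcal{O}(d))$ (hence $\nm{Y} \le 1$), and decomposing $Y - I_d = V_s + V_a$ into symmetric and antisymmetric parts, these constraints should allow one to bound both $\nm{V_s}_F$ and $\nm{V_a}_F$ uniformly by $-\Tr V_s = d - \Tr Y$.

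The hard part will be this local analysis. For $\G = \mathcal{O}(d)$, the first-order condition alone forces $Y$ symmetric positive semidefinite (the orthogonal complement of $\mathfrak{o}(d)$ inside the skew-symmetric matrices being trivial), so $\nm{Y - I_d}_F \le d - \Tr Y$ follows from $\|v\|_2 \le \|v\|_1$. For finite subgroups such as $\mathcal{P}(d)$ or $\mathcal{Z}_m$, $\mathfrak{g} = \{0\}$ is vacuous but $I_d$ is isolated in $\G$, so any $Y \in \mathcal{S}$ near $I_d$ factors as $\lambda I_d + (1-\lambda)\hat Y$ with $\hat Y \in \mathrm{conv}(\G \setminus \{I_d\})$, yielding a linear bound on the ratio. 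The genuinely difficult case is an intermediate closed subgroup whose identity component has a proper nonzero Lie algebra $\mathfrak{g} \subsetneq \mathfrak{o}(d)$: there one must rule out directions in the tangent cone of $\mathrm{conv}(\G)$ at $I_d$ for which $\nm{V_a}_F$ outgrows $|\Tr V_s|$. This appears to require finer information on how cosets of the identity component $\exp(\mathfrak{g})$ sit inside $\G$, which is probably why the authors verify the four listed subgroups case-by-case rather than pursuing a purely abstract argument.
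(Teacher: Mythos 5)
This statement is labeled a \emph{conjecture} in the paper precisely because the authors do not prove it in general; Theorem~\ref{thm:group} verifies it only for the four specific subgroups $\mathcal{O}(d)$, $\mathcal{SO}(d)$, $\mathcal{P}(d)$, $\mathcal{Z}_m$, each by a tailored argument. Your proposal is likewise not a complete proof, and you are admirably clear about where it breaks down: for a closed subgroup whose identity component has Lie algebra $\mathfrak{g}$ with $\{0\} \subsetneq \mathfrak{g} \subsetneq \mathfrak{o}(d)$, the local analysis near $I_d$ is genuinely open, and this is exactly the gap the paper sidesteps by going case by case. So in terms of \emph{what is proved}, you and the paper are at the same place.

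Where you differ from the paper is in method for the resolved cases, and your framing is arguably more unified. Your reduction to $\Pi_\G(X) = I_d$ via the substitution $Y = \Pi_\G(X)^\top X$ is correct (it uses that $\G$ is a group, so $\Pi_\G(Q^\top X) = Q^{-1}\Pi_\G(X)$), and the split into a compactness argument away from $I_d$ plus a local analysis near $I_d$ is a clean organizing principle that the paper does not articulate. For $\mathcal{O}(d)$ the paper instead uses Carath\'eodory to bound the singular values of $X$ by $1$ and then estimates $\|I_d - \Sigma_X\|_F \le \Tr(I_d - \Sigma_X)$ directly from the SVD; your derivation via the first-order stationarity condition $\Tr(AY) = 0$ arrives at the same conclusion (though note that symmetry of $Y$ alone does not give positive semidefiniteness --- you also need the \emph{global} optimality in $\Pi_\G(Y) = I_d$, e.g.\ by testing against a reflection $I_d - 2v_iv_i^\top$ for a would-be negative eigenvector). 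For finite groups your factorization $Y = \lambda I_d + (1-\lambda)\hat{Y}$ with $\hat{Y} \in \text{conv}(\G \setminus \{I_d\})$ is correct and gives a clean existence proof for \emph{every} finite subgroup at once, which is more general than the paper's treatment of $\mathcal{P}(d)$ and $\mathcal{Z}_m$; the tradeoff is that the paper's bespoke arguments yield sharp constants ($\rho = 1$ for $\mathcal{P}(d)$ via the identity $2d - 2\Tr P \le (d - \Tr P)^2$, and $\rho = 1/\sin(\pi/m) = O(m)$ for $\mathcal{Z}_m$ via a complex-analytic computation), whereas your generic finite-group bound gives something like $O(\sqrt{d}/\min_{P\in\G\setminus\{I_d\}}(d - \Tr P))$, which for $\mathcal{Z}_m$ is $O(m^2)$. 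In short: your approach buys generality and a clearer structural picture, the paper's buys sharper constants, and neither closes the conjecture.
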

\noindent The validity of Conjecture~\ref{conj:group} would imply that condition~\eqref{master_condi_i} in the master theorem is superfluous.


\subsubsection{Proof of Theorem~\ref{thm:group}: Orthogonal Group $\mathcal{O}(d)$} \label{subsec:O_d} 
The projection $\Pi_{\mathcal{O}(d)}$ is given by the solution to the well-known \emph{orthogonal Procrustes problem}~\cite{gower2004procrustes}, \ie, for any $X\in\mathbb{R}^{d\times d}$ with SVD $U_X \Sigma_X V_X^\top$, we have
\begin{equation*}
\Pi_{\mathcal{O}(d)} (X) = U_X V_X^\top.
\end{equation*} 

Now, let $X\in \text{conv}(\mathcal{O}(d))$ be arbitrary. By Carath\'eodory's theorem, there exist $Q_1,\dots,Q_L\in \mathcal{O}(d)$ and $\omega_1, \dots, \, \omega_L \ge 0 $ such that 
\begin{equation*}
\sum_{\ell = 1}^L \omega_\ell  = 1 \quad\text{and}\quad X = \sum_{\ell = 1}^L \omega_\ell Q_\ell.
\end{equation*}
Then, for any $j \in [d]$,
\begin{equation}\label{ineq:4}
\sigma_j (X) = \sigma_j \left( \sum_{\ell =1}^L \omega_\ell Q_\ell \right) \le \sum_{\ell = 1}^L \omega_\ell \sigma_j \left( Q_\ell \right) = \sum_{\ell = 1}^L \omega_\ell = 1,
\end{equation}
where $\sigma_j(\cdot) $ denotes the $j$-th largest singular value.
This implies that $I_d - \Sigma_X $ is positive semidefinite and 
\begin{equation*}
\begin{split}
\text{dist}(X, \mathcal{O}(d))  &= \nm{ X - \Pi_{\mathcal{O}(d)}(X) }_F = \nm{U_X \Sigma_X V_X^\top  - U_X  V_X^\top }_F  = \nm{ I_d - \Sigma_X }_F \\
& \le \Tr\left( I_d - \Sigma_X \right)  = \Tr\left(  I_d - V_X \Sigma_X {V_X}^\top \right)  = \Tr\left( I_d - {X}^\top \Pi_{\mathcal{O}(d)}(X) \right).
\end{split}
\end{equation*}
Thus, Condition~\ref{cond:alpha} holds for $\mathcal{O}(d)$ with $\alpha = 1$. Furthermore, we have 
\begin{equation}
\begin{split}\label{ineq:26}
&\, \Tr\left( I_d - {X}^\top \Pi_{\mathcal{O}(d)}(X) \right)  = \Tr\left( I_d - \Sigma_X \right) = \sum_{j = 1}^d (1-\sigma_j(X)) \\
\le&\, \sum_{j = 1}^d (1-\sigma_j(X)) (1+\sigma_j(X)) = \sum_{j = 1}^d (1-\sigma_j^2 (X)) = d- \nm{X}_F^2,
\end{split}
\end{equation}
which shows that Condition~\ref{cond:beta} holds for $\mathcal{O}(d)$ with $\beta = 1$.

\subsubsection{Proof of Theorem~\ref{thm:group}: Special Orthogonal Group $\mathcal{SO}(d)$}\label{subsec:SO_d}
We adopt a similar strategy to the one used in Section~\ref{subsec:O_d} to establish Condition~\ref{cond:alpha} for $\mathcal{SO}(d)$. First, we have the following result, which gives an explicit formula for the projection $\Pi_{\mathcal{SO}(d)}$ and is known as the \emph{Kabsch algorithm}~\cite{kabsch1978discussion}.
\begin{lem}\label{lem:kabsch}
Let $X \in \RR^{d \times d}$ be a matrix with SVD $X = U_X \Sigma_X V_X^\top$. Let $I_X \in \RR^{d\times d}$ be the diagonal matrix defined by $I_X = {\rm Diag}(1,\ldots,1,\det(U_X V_X^\top))$. Then, we have
\begin{equation*}
\Pi_{\mathcal{SO}(d)} \left( X \right) = U_X I_X V_X^\top.
\end{equation*}
\end{lem}
\noindent Now, let $X \in {\rm conv}(\mathcal{SO}(d))$ be arbitrary. Using Lemma~\ref{lem:kabsch}, we have
\begin{equation*}
\begin{split}
\text{dist}(X, \mathcal{SO}(d))& = \nm{ X - \Pi_{\mathcal{SO}(d)}(X)}_F  = \nm{U_X \Sigma_X {V_X}^\top -  U_X I_X {V_X}^\top}_F = \nm{I_d -  \Sigma_X I_X }_F \\
& \le \Tr\left( I_d - \Sigma_X I_X \right)  = \Tr\left( I_d - V_X \Sigma_X I_X {V_X}^\top \right) = \Tr\left( I_d - X^\top \Pi_{\mathcal{SO}(d)}(X) \right),
\end{split}
\end{equation*}
where the inequality follows from \eqref{ineq:4}. This shows that Condition~\ref{cond:alpha} holds for $\mathcal{SO}(d)$ with $\alpha = 1$.

To establish Condition~\ref{cond:beta} for $\mathcal{SO}(d)$, let $X \in {\rm conv}(\mathcal{SO}(d))$ be arbitrary and consider first the case where $\det(U_X V_X) = 1$. Then, we have $I_X = I_d$ and $\Pi_{\mathcal{SO}(d)} (X) = \Pi_{\mathcal{O}(d)} (X)$. It follows from~\eqref{ineq:26} that
\[ 
\Tr\left( I_d - {X}^\top \Pi_{\mathcal{SO}(d)}(X) \right) = \Tr\left( I_d - {X}^\top \Pi_{\mathcal{O}(d)}(X) \right)  
\le d- \nm{X}_F^2.
\]
Next, consider the case where $\det(U_X V_X) = -1$, \ie, $\Pi_{\mathcal{O}(d)} (X) \in \mathcal{O}(d) \setminus \mathcal{SO}(d)$. Since $X\in \text{conv}(\mathcal{SO}(d))$, by Carath\'eodory's theorem, there exist $Q_1,\dots,Q_L\in \mathcal{SO}(d)$ and $\omega_1, \dots, \, \omega_L \ge 0 $ such that 
\begin{equation*}
\sum_{\ell = 1}^L \omega_\ell  = 1 \quad\text{and}\quad X = \sum_{\ell = 1}^L \omega_\ell Q_\ell.
\end{equation*}
Hence, we have
\begin{equation}\label{ineq:27}
\begin{split}
d - \nm{X}_F^2 & \ge \Tr\big( I_d - X^\top \Pi_{\mathcal{O}(d)} (X) \big) = \sum_{\ell = 1}^L \omega_\ell \sum_{j=1}^d \big(1 - \lambda_j( Q_\ell^\top \Pi_{\mathcal{O}(d)}(X) )\big) \ge \sum_{\ell = 1}^L  \omega_\ell\cdot 2 = 2,
\end{split}
\end{equation}
where the first inequality follows from~\eqref{ineq:26} and the second inequality follows from the fact that $Q_\ell^\top \Pi_{\mathcal{O}(d)}(X)\in \mathcal{O}(d)\setminus\mathcal{SO}(d)$ and hence the eigenvalues of $Q_\ell^\top \Pi_{\mathcal{O}(d)}(X)$ are either $+1$ or $-1$ with at least one being $-1$. This gives
\begin{align*}
&\, \Tr\big( I_d - X^\top \Pi_{\mathcal{SO}(d)} (X) \big) = \Tr\left( I_d - \Sigma_X I_X \right) = 1+ \sigma_d(X) + \sum_{j = 1}^{d-1} (1 - \sigma_j(X)) \\
\le &\, 2 + \sum_{j = 1}^{d-1} (1 - \sigma_j^2 (X)) \le 2 + \sum_{j = 1}^d (1 - \sigma_j^2 (X)) \le 2\left( d - \nm{X}_F^2 \right),
\end{align*}
where the first equality follows from Lemma~\ref{lem:kabsch}, the second equality follows from the assumption that $\det(U_X V_X) = -1$, the first inequality follows from the fact that $\sigma_j(X) \le 1$ for $j = 1,\dots, d$, and the last inequality follows from~\eqref{ineq:27}. This shows that Condition~\ref{cond:beta} holds for $\mathcal{SO}(d)$ with $\beta = \tfrac{1}{2}$.

\subsubsection{Proof of Theorem~\ref{thm:group}: Permutation Group $\mathcal{P}(d)$}\label{subsec:pi_d}
It is easy to verify that the projection $\Pi_{\mathcal{P}(d)}$ is given by an optimal solution to the \emph{assignment problem}~\cite{burkard2012assignment}, \ie, for any $X\in \mathbb{R}^{d\times d}$, we have
\begin{equation*}
\Pi_{\mathcal{P}(d)}(X) = \argmax_{Q\in \mathcal{P}(d) } \, \langle Q, X \rangle.
\end{equation*}
The above problem can be solved in polynomial time by linear programming or the Hungarian (also known as the Kuhn--Munkres) algorithm.

To establish Condition~\ref{cond:alpha} for $\mathcal{P}(d)$, we first note that for any $Q\in \mathcal{P}(d)$, we either have $d = \Tr(Q)$ or $d - \Tr(Q) \ge 2$. It follows that
\begin{equation}\label{ineq:5}
\begin{split}
& \, \nm{I_d - Q }_F^2 = \sum_{i =1}^d \sum_{j=1}^d (I_d - Q )_{ij}^2  =  \sum_{i =1}^d \sum_{j=1}^d | (I_d - Q )_{ij} |   \\
=& \, \sum_{i \neq j}   Q_{ij} + \sum_{i=1}^d (1- Q_{ii} ) =  2d - 2 \Tr(Q)  \le ( d - \Tr(Q))^2 .
\end{split}
\end{equation}
For any $Y \in \text{conv}(\mathcal{P}(d))$, by Carath{\'e}odory's theorem, there exist $Q_1,\dots,Q_L\in \mathcal{P}(d)$ and $\omega_1, \dots, \, \omega_L \ge 0 $ such that
\begin{equation*}
\sum_{\ell = 1}^L \omega_\ell = 1\quad \text{and}\quad Y = \sum_{\ell  = 1}^L \omega_\ell Q_\ell .
\end{equation*}
Using~\eqref{ineq:5}, we get
\begin{equation}\label{ineq:21}
\nm{ I_d - Y }_F \le \sum_{\ell = 1}^L \omega_\ell \nm{ I_d - Q_\ell }_F \le \sum_{\ell = 1}^L \omega_\ell \left( d - \Tr\left( Q_\ell \right) \right) = d - \Tr(Y).
\end{equation}
Now, let $X\in \text{conv}(\mathcal{P}(d))$ be arbitrary. Since ${\Pi_{\mathcal{P}(d)}(X)}^\top X \in \text{conv}(\mathcal{P}(d))$, by invoking~\eqref{ineq:21} with $Y = {\Pi_{\mathcal{P}(d)}(X)}^\top X$, we obtain
\begin{equation*}
\begin{split}
\text{dist}(X, \mathcal{P}(d)) & = \nm{X - \Pi_{\mathcal{P}(d)}(X)}_F = \nm{ {\Pi_{\mathcal{P}(d)}(X)}^\top X - I_d }_F\\
& \le d - \Tr\left( {\Pi_{\mathcal{P}(d)}(X)}^\top X \right) 
=  \Tr\left( I_d - X^\top {\Pi_{\mathcal{P}(d)}(X)} \right).
\end{split}
\end{equation*}
This shows that Condition~\ref{cond:alpha} holds for $\mathcal{P}(d)$ with $\alpha=1$.

We will need the following definition.
\begin{defi}[Minimum Separation]
\label{defi:min_sep}
The \emph{minimum separation} of a discrete set $\G$ is defined as
\[ \tau =  \min_{Q, Q' \in \G, \, Q \neq Q'} \| Q - Q' \|_F. \]
\end{defi}

To establish Condition~\ref{cond:beta} for $\mathcal{P}(d)$, we prove the following stronger result, which relates $\beta$ to the minimum separation and states that Condition~\ref{cond:beta} actually holds for any discrete subgroup of $\mathcal{O}(d)$.

\begin{prop}\label{prop:cond_2_discrete}
Let $\G$ be a discrete subgroup of $\mathcal{O}(d)$ with at least two elements and minimum separation $\tau$. Then, Condition~\ref{cond:beta} holds for $\G$ with
\[ \beta = \frac{\tau^2}{2d}. \]
\end{prop}

\begin{proof}
The compactness of $\mathcal{O}(d)$ implies that $\G$ must be finite. Hence, we may let $\G = \{Q_1,\ldots,Q_L\}$ with $Q_1,\ldots,Q_L \in \mathcal{O}(d)$. For any $X\in \text{conv}(\G)$, there exist $\omega_1, \dots, \, \omega_L \ge 0 $ such that
\begin{equation*}
\sum_{\ell = 1}^L \omega_\ell = 1\quad \text{and}\quad X = \sum_{\ell  = 1}^L \omega_\ell Q_\ell .
\end{equation*}
Without loss of generality, suppose that $\omega_1 = \max_{\ell \in [L]} \omega_\ell$. Then, we have
\begin{equation*}
\begin{split}
\max_{Q\in \G} \Tr(X^\top Q) \ge \Tr(X^\top Q_1) = \Tr\left( \sum_{\ell = 1}^L \omega_\ell Q_\ell^\top Q_1 \right) \ge \omega_1 d + (1-\omega_1) \min_{i \neq j} \Tr(Q_i^\top Q_j),
\end{split}
\end{equation*}
which implies that
\begin{equation}
\label{ineq:28}
\Tr \big( I_d - X^\top \Pi_{\G}(X) \big) = d - \max_{Q\in \G} \Tr(X^\top Q) \le (1-\omega_1) \left( d - \min_{i \neq j} \Tr(Q_i^\top Q_j) \right).
\end{equation}
On the other hand, we have
\begin{equation*}
\begin{split}
&\, \nm{X}_F^2  = \sum_{\ell = 1}^L \omega_\ell^2 \nm{Q_\ell}_F^2 + \sum_{\ell' \neq \ell}\omega_\ell \omega_{\ell'} \Tr(Q_{\ell'}^\top Q_\ell) \le \sum_{\ell = 1}^L \omega_\ell^2 d + \sum_{\ell' \neq \ell}\omega_\ell \omega_{\ell'} \max_{i\neq j}\Tr(Q_i^\top Q_j )\\
\le & \, \sum_{\ell = 1}^L \omega_\ell^2 d + \left(1 - \sum_{\ell = 1}^L \omega_\ell^2 \right) \max_{i\neq j}\Tr(Q_i^\top Q_j ) = d - \left( d - \max_{i\neq j}\Tr(Q_i^\top Q_j ) \right)\left(1 - \sum_{\ell = 1}^L \omega_\ell^2 \right) \\
\le &\, d - \left( d - \max_{i\neq j}\Tr(Q_i^\top Q_j ) \right)\left(1 - \omega_1 \right) ,
\end{split}
\end{equation*}
which, upon substitution into~\eqref{ineq:28}, shows that Condition~\ref{cond:beta} holds for $\G$ with 
\[ \beta = \frac{d - \max_{ i \neq j } \Tr(Q_i^\top Q_j)}{d - \min_{ i \neq j } \Tr(Q_i^\top Q_j)}. \]
Finally, note that 
\[ 2\left( d- \min_{ i \neq j } \Tr(Q_i^\top Q_j) \right) = \max_{ i \neq j } \nm{ Q_i - Q_j }_F^2 = 2d, \]
and that 
\[ 2\left( d - \max_{ i \neq j } \Tr(Q_i^\top Q_j) \right) = \min_{ i \neq j } \nm{ Q_i - Q_j }_F^2 = \tau^2 .\]
This completes the proof.
\end{proof}

When $\G = \mathcal{P}(d)$, a simple calculation shows that
$ \tau = 2$ .
Hence, by Proposition~\ref{prop:cond_2_discrete}, Condition~\ref{cond:beta} holds for $\mathcal{P}(d)$ with $\beta = \frac{2}{d}$.

\subsubsection{Proof of Theorem~\ref{thm:group}: The Cyclic Group $\mathcal{Z}_m$}\label{subsec:Z_m}
We first prove that Condition~\ref{cond:alpha} holds for $\mathcal{Z}_m$ for any integer $m\ge1$. In particular, we show that
\begin{equation*}
\text{\normalfont{dist}} (X, \mathcal{Z}_m) \le
\begin{cases}
      \Tr\left( I_d - X^\top \Pi_{\mathcal{Z}_m}(X) \right), \hfill & \text{ if } m = 1, 2 , \\
      \hfill \frac{1}{\sqrt{2}\sin\left( \frac{\pi}{m} \right)}\Tr\left( I_d - X^\top \Pi_{\mathcal{Z}_m}(X) \right), \hfill & \text{ if } m \ge 3.
  \end{cases}
\end{equation*}
To do so, we note that every $X\in \text{conv}(\mathcal{Z}_m)$ is of the form
\begin{equation*}
X = \begin{bmatrix}
x & -y \\
y & x
\end{bmatrix}
\end{equation*}
and gives rise to a complex number $z_X \in \mathbb{C}$ defined by $z_X = x + \mathbf{i} y$, where $\mathbf{i}$ is the imaginary unit.
Let $\Re(z)$ denote the real part of any complex number $z\in\mathbb{C}$,
\begin{equation}\label{eq:6}
Q_k = \begin{bmatrix}
\cos \frac{2k\pi}{m} & - \sin \frac{2k\pi}{m}\\[1mm]
\sin \frac{2k\pi}{m} & \cos \frac{2k\pi}{m}
\end{bmatrix},\quad k = 0,\dots,m-1,
\end{equation}
and 
\begin{equation*}
\hat{k}(X) \in \argmin_{ k \in \{0, \dots, m-1\}} \nm{ X - Q_k }_F^2.
\end{equation*}
For $m = 1$, we have $\hat{k}(X) = 0$ for any $X\in \text{conv}(\mathcal{Z}_1)$ and hence $\Pi_{\mathcal{Z}_1} (X) = Q_{0} = I_2$. It follows that
\begin{equation*}
\text{\normalfont{dist}} (X, \mathcal{Z}_1) = \nm{X - I_2}_F \le \Tr(I_2 - X) = \Tr(I_2 - X^\top \Pi_{\mathcal{Z}_1} (X)),
\end{equation*}
\ie, Condition~\ref{cond:alpha} holds for $\mathcal{Z}_1$ with $\alpha = 1$. For $m = 2$, if $\Re(z_X) = x \ge 0$, then $\hat{k}(X) = 0$ and $\Pi_{\mathcal{Z}_2} (X) = Q_{0} = I_2$; if $\Re(z_X) = x < 0$, then $\hat{k}(X) = 1$ and $\Pi_{\mathcal{Z}_2} (X) = Q_{1} = -I_2$. Therefore, we have
\begin{align*}
\text{\normalfont{dist}} (X, \mathcal{Z}_2) & =
				\left\{
                \begin{array}{l}
                  \nm{X - I_2}_F\\
                  \nm{X + I_2}_F
                \end{array} \le
              \right. 
              \left\{
                \begin{array}{l}
                   \Tr(I_2 - X)\\
                  \Tr(I_2 + X)
                \end{array}
                \right. 
                 \le 
                 \Tr(I_2 - X^\top \Pi_{\mathcal{Z}_2} (X) ),
                 \quad
                \begin{array}{l}
                   \text{ if } x \ge 0, \\
                   \text{ if } x < 0,
                \end{array}
\end{align*}
which shows that Condition~\ref{cond:alpha} holds for $\mathcal{Z}_2$ with $\alpha = 1$.

Now, consider the case where $m\ge 3$. On one hand, we have 
\begin{equation}\label{eq:2}
 \text{\normalfont{dist}} (X, \mathcal{Z}_m) = \nm{ X - Q_{\hat{k}(X)} }_F
 = \sqrt{2} \left| z_X - z_{Q_{\hat{k}(X)}} \right| = \sqrt{2}  \left| z_X - e^{\frac{2\hat{k}(X) \pi \mathbf{i}}{m}} \right| ,
\end{equation}
where $\left| \,\cdot\,\right|$ denotes the modulus of a complex number.
On the other hand,
\begin{equation}\label{eq:3}
\Tr\left( I_2 - X^\top \Pi_{\mathcal{Z}_m}(X) \right) = 2 - \Tr (X^\top Q_{\hat{k}(X)} ) = 2 \left(1-  \Re\left( z_X \cdot e^{-\frac{2\hat{k}(X) \pi \mathbf{i}}{m}} \right)\right).
\end{equation}
Therefore, in order to establish Condition~\ref{cond:alpha} for $\mathcal{Z}_m$, we need to bound the ratio
\begin{equation}\label{eq:ratio}
\frac{\left| z - e^{\frac{2 k \pi \mathbf{i}}{m}} \right|}{1-  \Re\left( z \cdot e^{-\frac{2 k \pi \mathbf{i}}{m}} \right)}
\end{equation}
subject to the constraint that $z = z_X$ for some $X\in \text{conv}(\G) $ with $\hat{k}(X) = k$. 
\begin{figure}[t]
\centering

\begin{subfigure}{.48\textwidth}
  \centering
  \includegraphics[width=0.75\linewidth]{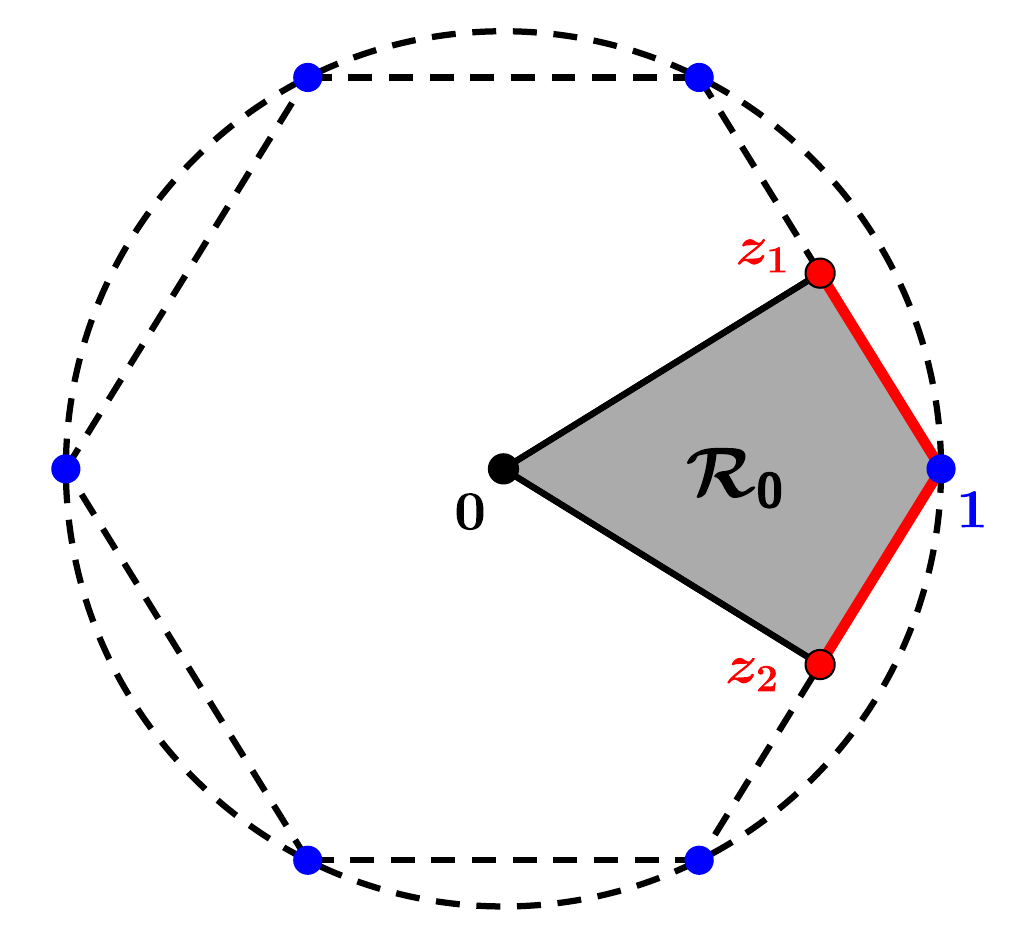}  
  \caption{$\mathcal{R}_0$}
  \label{fig:R0}
\end{subfigure}
\hfill
\begin{subfigure}{.48\textwidth}
  \centering
  \includegraphics[width=0.75\linewidth]{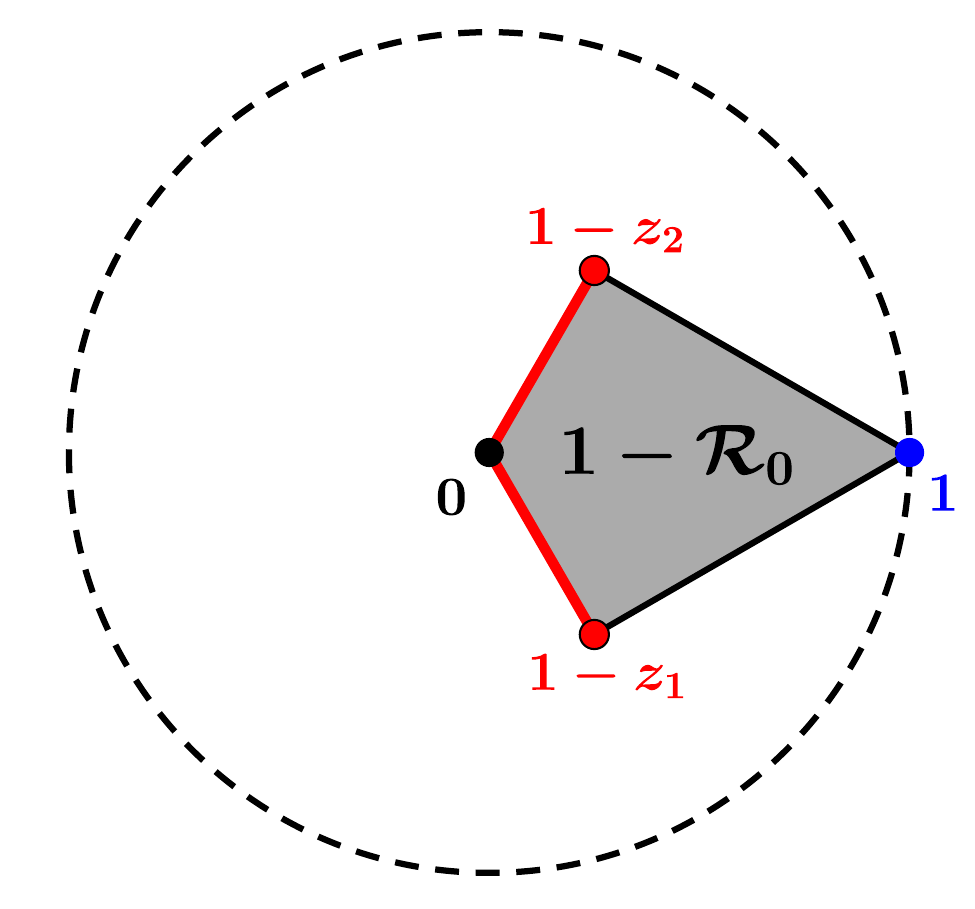}  
  \caption{$1 - \mathcal{R}_0$}
  \label{fig:1-R0}
\end{subfigure}
\caption{The regions $\mathcal{R}_0$ and $1 - \mathcal{R}_0$ associated with the cyclic group $\mathcal{Z}_6$. On panel~\subref{fig:R0}, the blue points are the group elements of $\mathcal{Z}_6$ and the points on the two red line segments are the maximizers of the ratio~\eqref{eq:ratio_R0}.} 
\label{fig:cyclic}
\end{figure}
By symmetry, we can assume without loss of generality that $\hat{k}(X) = 0$ and consider the shaded region $\mathcal{R}_0$ shown in Figure~\ref{fig:R0}, which is defined by 
\[ \mathcal{R}_0 := \left\lbrace z\in\mathbb{C}: z \in \text{conv}(\{z_{Q_0},\ldots,z_{Q_{m-1}}\}), \ 0 \in \argmin_{k \in \{0,\dots, m-1\}} \left| z - e^{\frac{2k \pi \mathbf{i}}{m}} \right| \right\rbrace. \]
Over the region $\mathcal{R}_0$, the ratio~\eqref{eq:ratio} reduces to
\begin{equation}\label{eq:ratio_R0}
\frac{\left| 1 - z \right|}{1-  \Re\left( z \right)}.
\end{equation}
Let $z_1$ and $z_2$ be the midpoints between 1 and its two neighboring group elements $e^{2\pi\mathbf{i}/m}$ and $e^{-2\pi \mathbf{i}/m}$, respectively.
We claim that any point on the line segments $[1 , z_1]\cup [1,z_2]$ (see Figure~\ref{fig:R0}) is a maximizer of the ratio~\eqref{eq:ratio_R0} over $\mathcal{R}_0$. To see this, we consider the polar representation $z' = |z'| \cdot e^{\mathbf{i}\phi}$ of the transformed variable $z' = 1- z$, which lies in the region $1 - \mathcal{R}_0$ (see Figure~\ref{fig:1-R0}). By the angle sum formula for the regular $m$-gon, we have $\phi \in \left[ -\pi \left( \frac{1}{2} - \frac{1}{m} \right), \pi \left( \frac{1}{2} - \frac{1}{m} \right) \right]$. It follows that
\begin{equation}\label{ineq:20}
\begin{split}
&\, \max_{z\in \mathcal{R}_0} \frac{\left| 1 - z \right|}{1-  \Re\left( z \right)} = \max_{z' \in 1-\mathcal{R}_0} \frac{|z'|}{\Re(z')}
 =  \max_{\phi \in \left[ -\pi \left( \frac{1}{2} - \frac{1}{m} \right), \pi \left( \frac{1}{2} - \frac{1}{m} \right) \right]} \frac{1}{\Re(e^{\mathbf{i} \phi})} \\
 = &\, \max_{\phi \in \left[ -\pi \left( \frac{1}{2} - \frac{1}{m} \right), \pi \left( \frac{1}{2} - \frac{1}{m} \right) \right]}\frac{1}{\cos\phi} \le \frac{1}{\cos \left( \frac{1}{2} - \frac{1}{m} \right)\pi} = \frac{1}{\sin \frac{\pi}{m}}.
 \end{split}
\end{equation}
This proves the claim since
\begin{equation*}
\frac{\left| 1 - z \right|}{1-  \Re\left( z \right)} = \frac{1}{\sin \frac{\pi}{m}}, \quad \forall z\in [1 , z_1]\cup [1,z_2].
\end{equation*}

Now, using \eqref{eq:2}, \eqref{eq:3}, and \eqref{ineq:20}, we get
\begin{equation*}
\text{\normalfont{dist}} (X, \mathcal{Z}_m) \le \frac{1}{\sqrt{2}\sin \frac{\pi}{m}}\Tr\left( I_d - X^\top \Pi_{\mathcal{Z}_m}(X) \right).
\end{equation*}
This shows that Condition~\ref{cond:alpha} holds for $\mathcal{Z}_m$ (where $m\ge3$) with $\alpha = \frac{1}{\sqrt{2}\sin \frac{\pi}{m}}$.

Next, we establish Condition~\ref{cond:beta} for $\mathcal{Z}_m$ with $m\ge1$. It is trivial to show that Condition~\ref{cond:beta} holds for $\mathcal{Z}_1$ with $\beta=1$. For $m\ge2$, since $\mathcal{Z}_m$ is a discrete subgroup of $\mathcal{O}(2)$, we compute
\[ \tau = \left| 1 - e^{\frac{2\pi \mathbf{i}}{m} }  \right| = \sqrt{ 2 - 2\cos \frac{2\pi}{m} } = 2 \sin \frac{2\pi}{m} \]
and apply Proposition~\ref{prop:cond_2_discrete} to conclude that Condition~\ref{cond:beta} holds for $\mathcal{Z}_m$ with 
\[ \beta = \frac{4 \sin^2 \frac{2\pi}{m}}{2(2)} = \sin^2 \frac{\pi}{m}.\]

Finally, let us derive an explicit formula for the projection $\Pi_{\mathcal{Z}_m}$, where $m\ge1$.
\begin{prop}\label{prop:cyclic_proj}
For any
\begin{equation*}
X = \begin{bmatrix}
x_{11} & x_{12} \\
x_{21} & x_{22}
\end{bmatrix} \in \mathbb{R}^{2\times 2},
\end{equation*}
define
\begin{equation*}
\theta = \begin{cases}
      \arccos \frac{x_{21} - x_{12}}{\sqrt{( x_{11} + x_{22})^2 + (x_{21} - x_{12})^2}},  \hfill & \text{ if } x_{11} + x_{22} \ge 0, \\
      \hfill 2\pi - \arccos \frac{x_{21} - x_{12}}{\sqrt{( x_{11} + x_{22})^2 + (x_{21} - x_{12})^2}},  \hfill & \text{ if } x_{11} + x_{22} < 0,
  \end{cases}
\end{equation*}
which always lies in $[0,2\pi)$.
Then, for any $m \ge 1$,
\begin{equation*}
\Pi_{\mathcal{Z}_m}(X) = 
\begin{cases}
      Q_{{\rm round}\left(\frac{m}{4} - \frac{m\theta}{2\pi} \right)}, \hfill & \text{ if } 0 \le \theta \le \frac{\pi}{2} + \frac{\pi}{m}, \\
      \hfill Q_{{\rm round}\left(  \frac{5m}{4} - \frac{m\theta}{2\pi} \right)}, \hfill & \text{ if } \frac{\pi}{2} + \frac{\pi}{m} < \theta < 2\pi, 
  \end{cases}
\end{equation*}
where ${\rm round}(\,\cdot\,)$ rounds a number to its closest integer and $Q_k$ is defined in \eqref{eq:6}.
\end{prop}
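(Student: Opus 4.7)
The plan is to reduce the problem to maximizing a single sine function by a direct parameterization, solve that maximization explicitly, and then verify that the resulting rounded index always falls in $\{0,1,\dots,m-1\}$.

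First, by the definition~\eqref{def:Pi} of the projection, $\Pi_{\mathcal{Z}_m}(X)$ equals $Q_{k^*}$ where $k^*\in\{0,\dots,m-1\}$ maximizes $\langle X,Q_k\rangle$. A direct expansion using the formula~\eqref{eq:6} for $Q_k$ gives
$$\langle X,Q_k\rangle = a\cos\tfrac{2k\pi}{m} + b\sin\tfrac{2k\pi}{m},$$
where $a=x_{11}+x_{22}$ and $b=x_{21}-x_{12}$. Writing $r=\sqrt{a^2+b^2}$ and momentarily assuming $r>0$, I would verify through a short case analysis on the sign of $a$ (matching the two branches in the definition of $\theta$) that $\cos\theta=b/r$ and $\sin\theta=a/r$; the sine addition formula then condenses the objective to $\langle X,Q_k\rangle = r\sin\bigl(\theta+\tfrac{2k\pi}{m}\bigr)$.

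Second, maximizing this over $k\in\{0,\dots,m-1\}$ is equivalent to choosing $k$ so that $\theta+\tfrac{2k\pi}{m}$ is as close as possible to $\tfrac{\pi}{2}$ modulo $2\pi$, that is, choosing $k$ to minimize the distance from $\xi:=\tfrac{m}{4}-\tfrac{m\theta}{2\pi}$ modulo $m$. For $\theta\in[0,\tfrac{\pi}{2}+\tfrac{\pi}{m}]$ one has $\xi\in[-\tfrac{1}{2},\tfrac{m}{4}]$, so the candidate $k=\text{round}(\xi)$ is already in $\{0,1,\dots,\lceil m/4\rceil\}\subseteq\{0,\dots,m-1\}$ and is the global maximizer. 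For $\theta\in(\tfrac{\pi}{2}+\tfrac{\pi}{m},2\pi)$, the closest residue is $\xi+m=\tfrac{5m}{4}-\tfrac{m\theta}{2\pi}\in(\tfrac{m}{4},m-\tfrac{1}{2})$, and $\text{round}(\xi+m)$ lies in $\{\lceil m/4\rceil,\dots,m-1\}$; this gives the second branch of the formula.

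The one delicate point is the boundary $\theta=\tfrac{\pi}{2}+\tfrac{\pi}{m}$, where $\xi=-\tfrac{1}{2}$ and two neighboring integers yield the same value of $\sin(\theta+2k\pi/m)$; here both branches of the formula produce valid projections, which is consistent with the projection being non-unique. The degenerate case $r=0$ (i.e.\ $a=b=0$) is separate and trivial: every $Q_k$ achieves $\langle X,Q_k\rangle=0$, so any output $Q_k$ is a correct projection and the formula returns a well-defined one. These two technicalities, together with keeping track of the sign of $a$ when identifying $\theta$, are the only real obstacles in the argument; the rest is bookkeeping with the arcsine/arccosine ranges.
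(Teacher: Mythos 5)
Your proof is correct and follows essentially the same route as the paper's: expand $\langle X,Q_k\rangle$ to $r\sin\bigl(\theta+\tfrac{2k\pi}{m}\bigr)$, then choose the $k$ whose argument is nearest a peak of the sine, with the same case split at $\theta=\tfrac{\pi}{2}+\tfrac{\pi}{m}$ and the same interval bounds showing the rounded index lies in $\{0,\dots,m-1\}$. Your nearest-residue-mod-$m$ phrasing is a slightly tidier packaging of the paper's explicit two-peak analysis on $[0,4\pi)$, and your explicit handling of the degenerate case $x_{11}+x_{22}=x_{21}-x_{12}=0$ and of the tie at the boundary are minor points the paper's proof glosses over.
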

\begin{proof}
The result is trivial when $m = 1$. Thus, we assume that $m \ge 2$. 
Then, we have
\begin{equation*}
\begin{split}
\argmin_{k \in \{0,\dots, m-1\}} \nm{ X - Q_k }_F^2 & = \argmax_{k \in \{0,\dots, m-1\}} \left\langle X , Q_k \right\rangle \\
& = \argmax_{k \in \{0,\dots, m-1\}} \left\{ (x_{11} + x_{22}) \cos  \frac{2k\pi}{m} + (x_{21} - x_{12}) \sin  \frac{2k\pi}{m} \right\} \\
& =  \argmax_{k \in \{0,\dots, m-1\}} \sin\left( \theta + \frac{2k\pi}{m} \right).
\end{split}
\end{equation*}
Since $\theta$ can take any value in $[0, 2\pi)$ and $\frac{2k\pi}{m}$ lies in $[0, \frac{2(m-1)\pi}{m} ]$, we have 
\begin{equation*}
0 \le \theta + \frac{2k\pi}{m} \le 2\pi +\frac{2(m-1)\pi}{m} < 4\pi.
\end{equation*}
Moreover, the function $\varphi \mapsto \sin\varphi$ has two peaks in $\left[0, 2\pi +\frac{2(m-1)\pi}{m} \right)$, namely at $\varphi = \frac{\pi}{2}$ and $\varphi = \frac{5\pi}{2}$. It follows that
\begin{equation}\label{eq:5}
\argmin_{k \in \{0,\dots, m-1\}} \nm{ X - Q_k }_F^2  = \argmin_{k \in \{0,\dots, m-1\}} \min\left\lbrace \left| \theta + \frac{2k\pi}{m} - \frac{\pi}{2} \right|, \left| \theta + \frac{2k\pi}{m} - \frac{5\pi}{2} \right| \right\rbrace .
\end{equation}

We first consider the case where $0 \le \theta \le \frac{\pi}{2} + \frac{\pi}{m}$. Note that
\begin{equation*}
\theta + \frac{2k\pi}{m} \le \frac{\pi}{2} + \frac{\pi}{m} + \frac{2(m-1)\pi}{m} = \frac{5\pi}{2} - \frac{\pi}{m},
\end{equation*}
which implies that
\begin{equation*}
\left| \theta + \frac{2k\pi}{m} - \frac{5\pi}{2} \right| \ge \frac{\pi}{m}.
\end{equation*}
Also, since
\begin{equation*}
\frac{m}{4} - \frac{m\theta}{2\pi} \ge \frac{m}{4}- \frac{m}{2\pi}\left( \frac{\pi}{2} + \frac{\pi}{m} \right) = -\frac{1}{2},
\end{equation*}
we have $\text{round}\left( \frac{m}{4} - \frac{m\theta}{2\pi} \right) \in \{0,\dots,m-1\}$. Setting $k = \text{round}\left( \frac{m}{4} - \frac{m\theta}{2\pi} \right)$, we get
\begin{equation*}
\left| k - \left(\frac{m}{4} - \frac{m\theta}{2\pi} \right) \right| \le \frac{1}{2}
\end{equation*}
and hence
\begin{equation*}
 \left| \theta + \frac{2k\pi}{m} - \frac{\pi}{2} \right| \le \frac{\pi}{m}.
\end{equation*}
These, together with \eqref{eq:5}, yield
\begin{align*}
&\, \argmin_{k \in \{0,\dots, m-1\}} \nm{ X - Q_k }_F^2  = \argmin_{k \in \{0,\dots, m-1\}}  \left| \theta + \frac{2k\pi}{m} - \frac{\pi}{2} \right| \\
= &\, \argmin_{k \in \{0,\dots, m-1\}}  \left|k - \left(\frac{m}{4} - \frac{m\theta}{2\pi} \right) \right| = \text{round}\left(\frac{m}{4} - \frac{m\theta}{2\pi} \right).
\end{align*}

Next, we consider the case where $\frac{\pi}{2} + \frac{\pi}{m} < \theta < 2\pi$. Note that
\begin{equation*}
\theta + \frac{2k\pi}{m} - \frac{\pi}{2} > \frac{\pi}{2} + \frac{\pi}{m} - \frac{\pi}{2} = \frac{\pi}{m},
\end{equation*}
which implies that
\begin{equation*}
\left|  \theta + \frac{2k\pi}{m} - \frac{\pi}{2} \right| \ge \frac{\pi}{m}.
\end{equation*}
Also, since
\begin{equation*}
\frac{5m}{4} - \frac{m\theta}{2\pi} < \frac{5m}{4} -  \frac{m}{2\pi}\left( \frac{\pi}{2} + \frac{\pi}{m} \right) = m - \frac{1}{2},
\end{equation*}
we have $\text{round}\left( \frac{5m}{4} - \frac{m\theta}{2\pi} \right) \in \{0,\dots, m-1\}$. Setting $k = \text{round}\left( \frac{5m}{4} - \frac{m\theta}{2\pi} \right)$, we get
\begin{equation*}
\left| k - \left(  \frac{5m}{4} - \frac{m\theta}{2\pi} \right)  \right| \le \frac{1}{2}
\end{equation*}
and hence
\begin{equation*}
\left| \theta + \frac{2k\pi}{m} - \frac{5\pi}{2} \right| \le \frac{\pi}{m}.
\end{equation*}
These, together with \eqref{eq:5}, yield
\begin{align*}
&\, \argmin_{k \in \{0,\dots, m-1\}} \nm{ X - Q_k }_F^2  = \argmin_{k \in \{0,\dots, m-1\}} \left| \theta + \frac{2k\pi}{m} - \frac{5\pi}{2} \right| \\
= &\, \argmin_{k \in \{0,\dots, m-1\}}  \left| k - \left(  \frac{5m}{4} - \frac{m\theta}{2\pi} \right)  \right| = \text{round}\left(  \frac{5m}{4} - \frac{m\theta}{2\pi} \right).
\end{align*}
This completes the proof.
\end{proof}

\subsection{Erd{\H{o}}s-R{\'e}nyi Measurement Graphs}\label{sec:graph}
Recall from our discussion in Section~\ref{sec:master} that the parameter $\kappa$ can be viewed as a measure of the connectivity of the measurement graph $([n],E)$. In particular, when the measurement graph is complete, we have $\kappa = 0$, which shows that condition~\eqref{master_condi_ii} in the master theorem is satisfied. As it turns out, the condition can be satisfied by measurement graphs that are much sparser. In this subsection, we show that if the measurement graph is an \emph{Erd{\H{o}}s-R{\'e}nyi} random graph with \emph{observation rate} $p \ge \frac{c\log n}{n}$ for some constant $c>0$ --- \ie, the edge weights $\{w_{ij} : 1 \le i < j \le n\}$ are independent and identically distributed (i.i.d.) Bernoulli random variables with
\begin{equation*}
w_{ij} = \begin{cases}
      1, & \text{ with probability } p, \\
      0, & \text{ with probability } 1-p,
  \end{cases}\quad 1\le i < j \le n
\end{equation*}
--- then condition~\eqref{master_condi_ii} in the master theorem will be satisfied with high probability. More precisely, we have the following result:
\begin{thm}\label{thm:kappa}
Suppose that the measurement graph is an Erd{\H{o}}s-R{\'e}nyi random graph with observation rate $p \in (0,1]$. Then, there exist constants $c_1,c_2 > 0$ such that whenever $p \ge \frac{c_1 \log n}{n}$, we will have $\kappa \le \frac{1}{32}$ with probability at least $1-\frac{1}{n^{c_2}}$.
\end{thm}
\begin{proof}
Using the definitions of $D$, $W$, $F$, and $\kappa$ in Section~\ref{sec:master}, we compute
\begin{align*}
\kappa &= \nm{ D^{-1}W - \frac{1}{n}F } = \nm{ (\bar{D} \otimes I_d)^{-1} (\bar{W} \otimes I_d) - \frac{1}{n} (ee^\top \otimes I_d) } \\
&= \nm{ \left( \bar{D}^{-1} \otimes I_d \right)(\bar{W} \otimes I_d) - \frac{1}{n} (ee^\top \otimes I_d) } \\
&= \nm{ \left( \bar{D}^{-1}\bar{W} \right) \otimes I_d - \frac{1}{n}(ee^\top \otimes I_d) } \\
&= \nm{ \left( \bar{D}^{-1}\bar{W} - \frac{1}{n} ee^\top \right) \otimes I_d } \\
&= \nm{ \bar{D}^{-1}\bar{W} - \frac{1}{n} ee^\top },
\end{align*}
where the second line follows from the fact that $(A_1 \otimes A_2)^{-1} = A_1^{-1} \otimes A_2^{-1}$ for any invertible matrices $A_1$ and $A_2$, the third line follows from the fact that $(A_1 \otimes A_2)(A_3 \otimes A_4) = (A_1 A_3) \otimes (A_2 A_4)$ for any matrices $A_1,A_2,A_3,A_4$ with conformable dimensions, the fourth line follows from the bilinearity of the Kronecker product, and the last line follows from the fact that $\nm{ A_1 \otimes A_2 } = \nm{A_1} \cdot \nm{A_2}$ (these properties of the Kronecker product can be found in, \eg,~\cite[Chapter 4.2]{horn1991topics}). Now, we bound
\begin{align*}
\nm{ \bar{D}^{-1}\bar{W} - \frac{1}{n}ee^\top } &\le \nm{ I_n - \frac{1}{p(n-1)+1}\bar{D} } \cdot \nm{ \bar{D}^{-1}\bar{W} } + \frac{1}{p(n-1)+1} \nm{ \bar{W} - \mathbb{E}[\bar{W}] } \\
&\quad+ \nm{ \frac{1}{p(n-1)+1} \mathbb{E}[\bar{W}] - \frac{1}{n} ee^\top }.
\end{align*}
Since $\bar{D}^{-1}\bar{W}$ has non-negative entries and each of its rows sums to 1, we have $\nm{ \bar{D}^{-1} \bar{W} } \le 1$ by~\cite[Corollary 6.1.5]{horn1985matrix}. Moreover, observe that 
\[ \nm{ I_n - \frac{1}{p(n-1)+1}\bar{D} } = \max_{i \in [n]} \left| 1 - \frac{r_i}{p(n-1)+1} \right|, \]
where $r_i = \sum_{j=1}^n w_{ij} = 1 + \sum_{j \not= i} w_{ij}$ for $i=1,\ldots,n$. By Chernoff's inequality (cf.~\cite[Exercise 2.3.5]{vershynin2018high}), for any $t \in (0,1]$, we have 
\[ \Pr\left( \left| \sum_{j \not= i} w_{ij} - p(n-1) \right| \ge tp(n-1) \right) \le 2\exp\left( -\frac{p(n-1)t^2}{3} \right), \quad i = 1,\ldots,n. \]
It follows that for any $t \in (0,1]$,
\[ \Pr\left( \nm{ I_n - \frac{1}{p(n-1)+1} \bar{D} } \ge \frac{tp(n-1)}{p(n-1)+1} \right) \le 2n \cdot \exp\left( -\frac{p(n-1)t^2}{3} \right). \]
Next, by adapting the results in~\cite[Examples 3.14 and 6.8]{boucheron2013concentration} and~\cite[Corollary 3.6]{bandeira2016sharp}, we have, for any $p \ge \frac{\log n}{n}$, that
\begin{equation} \label{eq:ctr-adj-bd}
\Pr\left( \nm{ \bar{W} - \mathbb{E}[\bar{W}] } \ge 172\sqrt{pn} \right) \le \exp\left( -\frac{pn}{8} \right),
\end{equation}
see the proof of~\cite[Lemma 2]{lugosi2020concentration}. Lastly, since $\mathbb{E}[\bar{W}] = (1-p)I_n + p \cdot ee^\top$, we have
\begin{align*}
\nm{ \frac{1}{p(n-1)+1} \mathbb{E}[\bar{W}] - \frac{1}{n} ee^\top } &= \nm{ \frac{1-p}{p(n-1)+1} I_n + \left( \frac{p}{p(n-1)+1} - \frac{1}{n} \right) ee^\top } \\
&= \frac{1-p}{p(n-1)+1}.
\end{align*}
Upon setting $t=\frac{1}{66}$ and assuming that $p \ge \frac{(172 \times 66)^2 \log n}{n}$, we conclude that
\[ \kappa \le \frac{tp(n-1)}{p(n-1)+1} + \frac{172 \sqrt{pn}}{p(n-1)+1} + \frac{1-p}{p(n-1)+1} < \frac{1}{32} \]
with probability at least $1 - \frac{1}{n^{9000}}$.
\end{proof}


\subsection{Additive Sub-Gaussian Noise Model} \label{sec:noise}
Recall that under the additive noise model, the measurements are given by
\begin{equation} \label{eq:additive_noise}
C_{ij} = G_i^* {G_j^*}^\top + \Theta_{ij}, \quad (i,j)\in E. 
\end{equation}
The purpose of this subsection is to show that condition~\eqref{master_condi_iii} in the master theorem holds for a large class of noise matrices $\{\Theta_{ij} : (i,j) \in  E\}$.
Specifically, we focus on the case where $\{\Theta_{ij} : (i,j) \in E\}$ is a collection of independent random matrices whose entries are i.i.d. \emph{sub-Gaussian} random variables.
\begin{defi}[Sub-Gaussian Random Variable] \label{def:subG}
A random variable $\xi$ is said to be \emph{sub-Gaussian with parameter $\sigma > 0$} if
\begin{equation*}
\mathbb{E}[ \exp(\lambda\xi) ] \le \exp\left( \frac{\lambda^2\sigma^2}{2} \right), \quad \lambda \in \mathbb{R}.
\end{equation*}
\end{defi}
\noindent It can be shown that if $\xi$ is sub-Gaussian in the above sense, then it necessarily satisfies $\mathbb{E}[\xi]=0$. The class of sub-Gaussian random variables is rich. It contains, for example, the Gaussian, uniform, Bernoulli, and any bounded random variables, see~\cite[Example 2.5.8]{vershynin2018high}. In particular, a Gaussian random variable with mean zero and standard deviation $\sigma > 0$ is sub-Gaussian with parameter $\sigma$.

We first establish a tail inequality for the operator norm of the block matrix $\Delta$ (see~\eqref{eq:Delta-def} for the definition), which will be useful for verifying condition~\eqref{master_condi_iii} in the master theorem.


\begin{prop}\label{prop:Delta_norm}
Suppose that the measurement graph is an Erd{\H{o}}s-R{\'e}nyi random graph with observation rate $p \in (0,1]$. Let $\{\Theta_{ij} : 1 \le i < j \le n\}$ be independent noise matrices that are independent of the measurement graph and whose entries are i.i.d. sub-Gaussian random variables with parameter $\sigma>0$. Then, there exist constants $c_0, c_1 , c_2> 0$ such that whenever $p\ge \tfrac{c_0 (\log n)^2}{n}$, we have
\[ \Pr\left( \| \Delta \| \ge c_1\sigma\sqrt{pnd} \right) \le \frac{c_2}{n}. \]
If the sub-Gaussian entries $\{\Theta_{ij}: 1\le i < j\le n\}$ are zero-mean Gaussian with standard deviation $\sigma >0$, then the same inequality holds under the weaker requirement $p \ge \tfrac{c_0 \log n}{n}$.
\end{prop} 

\begin{proof}
Using~\eqref{eq:Delta-def} and~\eqref{eq:additive_noise}, we can write each block in $\Delta$ as
\begin{equation*}
[\Delta]_{ij} = w_{ij} \left( [C]_{ij} - \left[G^* {G^*}^\top  \right]_{ij} \right) = w_{ij} \left( C_{ij} - G_i^* {G_j^*}^\top \right) = w_{ij}  \Theta_{ij}, \quad 1 \le i < j \le n.
\end{equation*}
Therefore, conditioning on the measurement graph (\ie, on the values of the random variables $\{w_{ij} : 1 \le i < j \le n\}$), $\Delta$ is an $nd \times nd$ symmetric matrix whose upper triangular entries are independent random variables. 
Since the operator norm $\|\cdot\|$ is a convex, 1-Lipschitz function in the matrix entries, by Talagrand's inequality~\cite{ledoux2001concentration}, there exist constants $c_0, c_1 > 0$ such that
\begin{equation}\label{ineq:33}
\begin{split}
&\, \Pr\left( \| \Delta \| \ge \mathbb{E}[\|\Delta\|] + s \sigma \sqrt{\log nd} \,\Bigg|\, \|\Delta\|_{\infty} \le  3\sigma \sqrt{\log nd} \text{ and } \max_{i \in[n]} \sum_{j>i} w_{ij} \le epn \right)\\
 \le& \,  c_0 \exp\left( - c_1 s^2 \right).
\end{split}
\end{equation}
Using~\cite[Corollary~3.3]{bandeira2016sharp}, we obtain
\begin{equation}\label{ineq:34}
\begin{split}
&\, \mathbb{E}\left[\|\Delta\|\,\Bigg|\, \|\Delta\|_{\infty} \le  3\sigma \sqrt{\log nd} \text{ and } \max_{i \in[n]} \sum_{j>i} w_{ij} \le epn \right]  \\
\le& \, c_2 \sigma \left(  \max_{i\in [n]} \sqrt{d \sum_{ j >i} w_{ij} } + \sqrt{\log nd} \right) \le  c_2 \sigma \left(  \sqrt{e pnd } + \sqrt{\log nd} \right)
\end{split}
\end{equation}
for some constant $c_2 >0$.
Substituting~\eqref{ineq:34} into~\eqref{ineq:33} and taking $s = \sqrt{ \tfrac{\log nd }{c_1}}$,
we find that for some constants $c_3,c_4 > 0$, whenever $p \ge \frac{c_3 (\log n)^2}{n}$,
\begin{equation}
\label{ineq:35}
\begin{split}
\Pr\left( \|\Delta\| \ge c_4 \sigma \sqrt{pnd} \,\Bigg|\, \|\Delta\|_{\infty} \le 3 \sigma \sqrt{\log nd} \text{ and } \max_{i \in[n]} \sum_{j>i} w_{ij} \le epn  \right) \le \frac{c_0}{nd}.
\end{split}
\end{equation}

We now bound the probabilities of the two conditioned events.
Since $([\Theta]_{12})_{11}$ is a sub-Gaussian random variable with parameter $\sigma>0$, we can show by using the Markov inequality that 
\begin{equation*}
\Pr\left(  ([\Delta]_{12})_{11}  > 3 \sigma \sqrt{\log nd} \right) \le \Pr\left(  ([\Theta]_{12})_{11}  > 3 \sigma \sqrt{\log nd} \right)  \le \frac{1}{n^4 d^4},
\end{equation*}
which, by the union bound, implies that
\begin{equation}\label{eq:Delta_bound}
\Pr\left(  \|\Delta \|_\infty > 3 \sigma \sqrt{\log nd} \right) \le \frac{2}{n^2 d^2}.
\end{equation}

Next, since $\{w_{ij} : 1 \le i < j \le n\}$ are i.i.d. Bernoulli random variables with parameter~$p$, by the union bound and Chernoff's inequality~\cite[Theorem 2.3.1]{vershynin2018high}, there exist constants $c_5, c_6 > 0$ such that whenever $p \ge \frac{c_5 (\log n)^2}{n}$,
\begin{equation} \label{eq:edge-max}
\Pr\left( \max_{i \in [n]} \sum_{j>i} w_{ij} \ge epn \right) \le \sum_{i=1}^n \Pr\left( \sum_{j>i} w_{ij} \ge epn \right) \le n \cdot \exp(-p(n-1)) \le \frac{c_6}{n}.
\end{equation}
The desired inequality then follows by combining \eqref{ineq:35}, \eqref{eq:Delta_bound}, and \eqref{eq:edge-max}.

The last claim under the Gaussian assumption can be proved by similarly conditioning on the event
\[ \max_{i \in [n]} \sum_{j>i} w_{ij} \ge epn \]
and using \cite[Corollary~3.9]{bandeira2016sharp}. This completes the proof.
\end{proof}

The following theorem, which is a substantial generalization of \cite[Proposition 3.3]{bandeira2017tightness}, shows that under the setting of Proposition~\ref{prop:Delta_norm}, condition~\eqref{master_condi_iii} in Theorem~\ref{thm:master} will be satisfied with high probability.

\begin{thm}\label{thm:Delta_cond_random_graph}
Consider the setting of Proposition~\ref{prop:Delta_norm} and let $\alpha\ge1$ be arbitrary. Then, there exist constants $c_0, c_1, c_2 > 0$ such that whenever $p \ge \tfrac{c_0 (\log n)^2}{n}$ and $\sigma \le \frac{c_1\sqrt{pn}}{\alpha d}$, we have
\begin{equation*}
\Pr\left(\nm{D^{-1} \Delta} \le \frac{1}{32}\quad\text{and}\quad \nm{ \Pi^n \left( G^* + 2 D^{-1} \Delta G^* \right) - G^* }_F \le \frac{(\sqrt{2} -1)\sqrt{n}}{48\alpha}\right) \ge 1 - \frac{c_2}{n}.
\end{equation*}
If the sub-Gaussian entries $\{\Theta_{ij}: 1\le i < j\le n\}$ are zero-mean Gaussian with standard deviation $\sigma >0$, then the same inequality holds under the weaker requirement $p \ge \tfrac{c_0 \log n}{n}$.
\end{thm}

\begin{proof}
Upon applying Chernoff's inequality~\cite[Exercise 2.3.2]{vershynin2018high}, we have
\[ \Pr\left( r_i \le 1+\frac{p(n-1)}{2} \right) \le \left(\frac{2}{e}\right)^{p(n-1)/2}, \quad i=1,\ldots,n. \]
The above inequality and the union bound imply that for some constants $c_0, c_1 > 0$, whenever $p \ge \tfrac{c_0(\log n)^2}{n}$ ($p \ge \tfrac{c_0\log n}{n}$ in the Gaussian case), we have
\[
\Pr\left( \nm{ D^{-1} } \ge \left( 1+\frac{p(n-1)}{2} \right)^{-1} \right) \le \sum_{i=1}^n \Pr\left( r_i \le 1+\frac{p(n-1)}{2} \right) \le n \cdot \left(\frac{2}{e}\right)^{p(n-1)/2} \le \frac{c_1}{n}.
\]
This, together with Proposition~\ref{prop:Delta_norm}, implies the existence of constants $c_2, c_3, c_4 >0$ such that whenever $p \ge \frac{c_2 (\log n)^2}{n}$ ($p \ge \tfrac{c_2\log n}{n}$ in the Gaussian case) and $\sigma \le \frac{c_3\sqrt{pn}}{\alpha d}$, we have
\begin{equation}\label{ineq:10b}
\nm{D^{-1} \Delta} \le \nm{D^{-1}} \cdot \nm{\Delta} \le \left( 1 + \frac{p(n-1)}{2} \right)^{-1} \| \Delta \| \le \frac{\sqrt{2} - 1}{192\alpha\sqrt{d}} \le \frac{1}{32} 
\end{equation}
with probability at least $1-\frac{c_4}{n}$.

Next, we bound $\nm{ \Pi^n \left( G^* + 2 D^{-1} \Delta G^* \right) - G^* }_F$. By inequality~\eqref{ineq:contraction}, we have
\[ \nm{ \Pi^n \left( G^* + 2 D^{-1} \Delta G^* \right) - G^* }_F \le 4 \nm{ D^{-1} \Delta G^* }_F . \]
Since $G^*$ has $n$ blocks, each of which is a $d\times d$ orthogonal matrix, we have $\nm{G^*}_F = \sqrt{nd}$. Using the inequality $\nm{D^{-1} \Delta G^* }_F \le \nm{D^{-1} \Delta } \cdot \nm{ G^* }_F $, the desired bound then follows from~\eqref{ineq:10b}.
\end{proof}

\subsection{Entropic Spectral Initialization}\label{sec:spectral_ini} 
In order for our non-convex approach to enjoy the theoretical guarantee offered by the master theorem, we need to initialize GPM by a point that has a sufficiently small estimation error. As it turns out, the geometry of the closed subgroup $\G$ contains much information that can be used to guide our construction of such a point. Specifically, by considering the error-bound geometry of $\G$ as encapsulated in Condition~\ref{cond:beta} and the classic notion of \emph{metric entropy} of the quotient $\mathcal{O}(d)/\G$ of the orthogonal group $\mathcal{O}(d)$ by the closed subgroup $\G$,\footnote{Note that the quotient $\mathcal{O}(d)/\G$ may not be a group in general as we do not require the subgroup $\G$ to be normal.} we design a novel initialization procedure for GPM that produces a point satisfying condition~\eqref{master_condi_iv} in the master theorem. Before we present our proposed procedure, let us introduce some basic results concerning the metric entropy of the quotient $\mathcal{O}(d)/\G$.

\subsubsection{The Quotient $\mathcal{O}(d)/\G$ and Its Metric Entropy}\label{sec:entropy}
Given any closed subgroup $\G$ of $\mathcal{O}(d)$ and any orthogonal matrix $Q\in\mathcal{O}(d)$, the \emph{(left-) coset} $[Q]$ of $\G$ in $\mathcal{O}(d)$ is defined by
\begin{equation*}
[Q] := \left\{ O\in \mathcal{O}(d): O = QQ' \text{ for some } Q' \in \G \right\}.
\end{equation*}
The quotient $\mathcal{O}(d)/\G$ is then defined as the set of all cosets of $\G$ in $\mathcal{O}(d)$. We can define a natural distance on $\mathcal{O}(d)/\G$ by
\[ 
\text{dist}([Q_1], [Q_2]) = \min_{Q' \in \G} \nm{ Q_1 - Q_2 Q' }_F.
\] 
It can be easily seen that this distance is independent of the choice of the class representatives $Q_1$ and $Q_2$ of the cosets $[Q_1]$ and $[Q_2]$, respectively. Moreover, it turns $\mathcal{O}(d)/\G$ into a compact (under the quotient topology) metric space.

We next introduce the concepts of \emph{net} and \emph{covering number}, see, \eg,~\cite{kolmogorov1959varepsilon}.
\begin{defi}[Net and Covering Number]\label{def:net}
Let $(\mathcal{S},\nu)$ be a compact metric space and $\epsilon > 0$ be a parameter. A subset $\mathcal{N}\subseteq \mathcal{S}$ is said to be an \emph{$\epsilon$-net} of $\mathcal{S}$ if for any point $x \in \mathcal{S}$, there exists a point $y \in \mathcal{N}$ such that $\nu(x,y) \le \epsilon$. The cardinality of the smallest $\epsilon$-net is called the \emph{$\epsilon$-covering number}, denoted by $N(\mathcal{S},\epsilon)$. 
\end{defi}

The compactness of $\mathcal{S}$ implies that $N(\mathcal{S},\epsilon)$ is finite for any $\epsilon>0$. The following proposition offers an explicit and efficient construction of an $\epsilon$-net of the quotient $\mathcal{O}(d)/\G$.

\begin{prop}
\label{prop:entropy}
Let $\epsilon > 0$ and $Q_1,\dots, Q_K$ be random orthogonal matrices that are independently and uniformly distributed on $\mathcal{O}(d)$. Then, for any $O\in\mathcal{O}(d)$,
with probability at least $1 - \left( 1- N(\mathcal{O}(d)/\G,\tfrac{\epsilon}{2})^{-1} \right)^K$, we have
\[ \min_{k \in[K]} \text{\normalfont dist}([O], [Q_k]) \le \epsilon . \]
\end{prop}

\begin{proof}
Consider any $(\epsilon/2)$-net $\mathcal{N} = \left\lbrace[\bar{Q}_1], \dots, [\bar{Q}_{|\mathcal{N}|}]\right\rbrace$ of the quotient $\mathcal{O}(d)/\G$ and fix an arbitrary $O\in \mathcal{O}(d)$. Since the matrices $Q_1,\ldots,Q_K$ are independently and uniformly distributed on $\mathcal{O}(d)$, the cosets $[Q_1],\ldots,[Q_K]$ are independently and uniformly distributed on $\mathcal{O}(d)/\G$. Therefore, for any $k \in [K]$ and $\ell \in [|\mathcal{N}|]$,
\begin{equation*}
1 = \Pr\left( [Q_k] \in \bigcup_{\ell = 1}^{|\mathcal{N}|} \mathcal{B}([\bar{Q}_\ell], \tfrac{\epsilon}{2}) \right) \le \sum_{\ell = 1}^{|\mathcal{N}|} \Pr\left( [Q_k] \in \mathcal{B}([\bar{Q}_\ell], \tfrac{\epsilon}{2}) \right) = |\mathcal{N}| \cdot\Pr\left( [Q_k] \in \mathcal{B}([\bar{Q}_\ell], \tfrac{\epsilon}{2}) \right),
\end{equation*}
where $\mathcal{B}\left( [\bar{Q}_\ell],\tfrac{\epsilon}{2} \right) := \left\{ [Q]: {\rm dist}([\bar{Q}_\ell],[Q]) \le  \tfrac{\epsilon}{2} \right\}$ and the last equality follows from the fact that the distribution of $Q_k$ is invariant under multiplication by any fixed orthogonal matrix.
This gives
\begin{equation}\label{ineq:22}
\Pr\left( [Q_k] \in \mathcal{B}([\bar{Q}_\ell], \tfrac{\epsilon}{2}) \right) \ge |\mathcal{N}|^{-1}, \quad k = 1,\ldots,K; \, \ell = 1,\ldots,|\mathcal{N}|.
\end{equation}
Next, there exists some $\bar{\ell} \in [|\mathcal{N}|]$ such that $[O] \in \mathcal{B}([\bar{Q}_{\bar{\ell}}], \tfrac{\epsilon}{2})$. Hence, by~\eqref{ineq:22}, we have
\begin{align*}
&\, \Pr\left( [O] \in \mathcal{B}([Q_k], \epsilon)  \text{ for some } k \in [K] \right) = 1 - \prod_{k=1}^K\left( 1 - \Pr\left( [O] \in \mathcal{B}([Q_k], \epsilon) \right) \right)\\
\ge&\, 1 - \prod_{k=1}^K\left( 1 - \Pr\left(  [Q_k] \in \mathcal{B}([\bar{Q}_{\bar{\ell}}], \tfrac{\epsilon}{2})\right) \right)  \ge 1 -  (1 - |\mathcal{N}|^{-1})^K .
\end{align*}
Optimizing the lower bound over all possible $(\epsilon/2)$-nets $\mathcal{N}$ of $\mathcal{O}(d)/\G$ completes the proof.
\end{proof}

In view of Proposition~\ref{prop:entropy}, we are naturally interested in determining the covering number of the quotient $\mathcal{O}(d)/\G$, particularly when $\G$ is one of the four subgroups (\ie, $\mathcal{O}(d)$, $\mathcal{SO}(d)$, $\mathcal{P}(d)$, and $\mathcal{Z}_m$) we considered earlier. For $\G = \mathcal{O}(d)$, the quotient $\mathcal{O}(d)/\mathcal{O}(d)$ is the trivial group that contains only one element. Therefore, its $\epsilon$-covering number is $1$ for any $\epsilon >0$. For $\G = \mathcal{SO}(d)$, the quotient $\mathcal{O}(d)/\mathcal{SO}(d)$ is isomorphic to the Boolean group, which implies that its $\epsilon$-covering number is at most $ 2$ for any $\epsilon > 0$. In what follows, we provide an estimate of the covering number of the quotient $\mathcal{O}(d)/\G$ when $\G$ is a discrete subgroup of $\mathcal{O}(d)$. In particular, such an estimate applies to the cases of $\G=\mathcal{P}(d)$ and $\G=\mathcal{Z}_m$.

\subsubsection{Covering Number of the Quotient $\mathcal{O}(d)/\G$ for Discrete $\G$} 
To begin, let us introduce the concepts of \emph{packing} and \emph{packing number}, which are closely related to the concepts of net and covering number, respectively.
\begin{defi}[Packing and Packing Number]\label{def:packing}
Let $(\mathcal{S},\nu)$ be a compact metric space and $\epsilon > 0$ be a parameter. A subset $\mathcal{P}\subseteq \mathcal{S}$ is said to be an \emph{$\epsilon$-packing} of $\mathcal{S}$ if any two points $x,y \in \mathcal{P}$ satisfy $\nu(x,y) > \epsilon$. The cardinality of the largest $\epsilon$-packing is called the \emph{$\epsilon$-packing number}, denoted by $P(\mathcal{S},\epsilon)$.
\end{defi}
Given a compact metric space $(\mathcal{S},\nu)$ and a parameter $\epsilon>0$, we have the following relationship between the covering and packing numbers, see, \eg,~\cite[Inequality~(3)]{szarek1997metric}:
\begin{equation}\label{ineq:covering_packing}
N(\mathcal{S}, \epsilon) \le P(\mathcal{S}, \epsilon) \le N(\mathcal{S}, \tfrac{\epsilon}{2}).
\end{equation}

This inequality can then be used to establish the following result:
\begin{prop}\label{prop:discrete_entropy}
Let $\G$ be a discrete subgroup of $\mathcal{O}(d)$ and $\epsilon > 0$ be a given parameter.
Then, we have
\[ N( \mathcal{O}(d)/ \G ,\epsilon ) \le |\G|^{-1}\left(\frac{c}{\epsilon} \right)^{\frac{d(d-1)}{2}} \]
for some constant $c>0$.
\end{prop}
\begin{proof}
It suffices to consider the case where 
$\epsilon < \tau  $.
Let $\widetilde{\mathcal{P}}$ be a maximal $\epsilon$-packing of the quotient $\mathcal{O}(d)/ \G$, \ie, $ |\widetilde{\mathcal{P}}| = P(\mathcal{O}(d)/ \G , \epsilon) $. Then, for any distinct cosets $[Q_1], [Q_2] \in \widetilde{\mathcal{P}}$, we have
\begin{equation}\label{ineq:25}
\text{dist}([Q_1], [Q_2]) = \min_{Q'\in \G} \| Q_1 - Q_2 Q' \|_F > \epsilon.
\end{equation}
Consider the set $\mathcal{P} := \{ Q \in \mathcal{O}(d) : [ Q ] \in \widetilde{\mathcal{P}} \}$. We claim that $\mathcal{P}$ is an $\epsilon$-packing of $\mathcal{O}(d)$. To prove this, let $Q_1, Q_2\in \mathcal{P}$ be two distinct points. If $[Q_1] \neq [Q_2]$, then we have 
\[ \|Q_1 - Q_2\|_F \ge \min_{Q' \in \mathcal{G}} \| Q_1 - Q_2 Q' \|_F > \epsilon \]
by~\eqref{ineq:25}. If $[Q_1] = [Q_2]$, then $Q_1 = Q_2 Q'$ for some $Q' \in \mathcal{G} \setminus \{I_d\}$. It follows from Definition~\ref{defi:min_sep} that
\[ \| Q_1 - Q_2 \|_F = \| I_d - Q' \|_F > \epsilon. \]
Thus, the claim is established. Now, it is elementary to show that (i) for any $Q \in \mathcal{O}(d)$, we have $|[Q]| = |\G|$; (ii) for any two cosets $[Q_1],[Q_2] \in \mathcal{O}(d)/\G$, we either have $[Q_1]=[Q_2]$ or $[Q_1] \cap [Q_2] = \emptyset$. In particular, we see that $|\mathcal{P}| = |\widetilde{\mathcal{P}}| \cdot |\G| = P(\mathcal{O}(d)/\G, \epsilon) \cdot |\G|$. Consequently, there exists a constant $c>0$ such that
\begin{align*}
N( \mathcal{O}(d)/ \G ,\epsilon ) \cdot |\G| \le P( \mathcal{O}(d)/ \G ,\epsilon ) \cdot |\G| = | \mathcal{P} | \le P( \mathcal{O}(d), \epsilon ) \le N( \mathcal{O}(d), \tfrac{\epsilon}{2} ) \le \left(\frac{c}{\epsilon} \right)^{\frac{d(d-1)}{2}} ,
\end{align*}
where the first and third inequalities follow from \eqref{ineq:covering_packing}, the second inequality follows the fact that $\mathcal{P}$ is an $\epsilon$-packing, and the last inequality follows from~\cite[Theorem~7]{szarek1997metric}. This completes the proof.
\end{proof}

\subsubsection{Entropic Spectral Estimator}\label{subsec:ese}
We are now ready to develop our advertised initialization procedure for GPM. We will make use of the results in the previous subsection and the following variant of the Davis-Kahan Theorem~\cite{yu2015useful}.
\begin{thm}\label{thm:Davis-Kahan}
Let $Z, Z^* \in \RR^{M \times M}$ be symmetric matrices with eigenvalues $\lambda_1\ge \cdots \ge \lambda_M $ and $\lambda_1^* \ge \cdots \ge \lambda_M^* $, respectively.  For any integers $k,\ell$ such that $1\le k\le \ell\le M$, let $V, V^* \in \RR^{M\times (\ell - k + 1)}$ be the matrices defined by
\begin{equation*}
V = \begin{bmatrix}
| & & | \\
v_k & \cdots & v_{\ell}\\
| & & | 
\end{bmatrix} \quad\text{and}\quad
V^* = \begin{bmatrix}
| & & | \\
v_k^* & \cdots & v_{\ell}^*\\
| & & | 
\end{bmatrix},
\end{equation*}
where $v_i$ and $v_i^*$ are the eigenvectors of $Z$ and $Z^*$ corresponding to the eigenvalues $\lambda_i$ and $\lambda_i^*$, respectively, for $i \in [M]$.
Suppose that $\min\{\lambda_{k-1}^* - \lambda_k^*, \lambda_\ell^* - \lambda_{\ell+1}^* \}>0$, where $\lambda_0 = +\infty$ and $\lambda_{M+1} = -\infty$ by convention. Then, there exists an orthogonal matrix $Q^*\in \mathcal{O}( \ell- k + 1)$ such that
\begin{equation*}
\nm{ V Q^*  - V^* }_F \le \frac{ 2\sqrt{2} \min\{ \sqrt{\ell - k + 1} \nm{ Z - Z^* }, \nm{ Z - Z^* }_F \} }{\min\{\lambda_{k-1}^* - \lambda_k^*, \lambda_\ell^* - \lambda_{\ell+1}^* \}}.
\end{equation*}
\end{thm}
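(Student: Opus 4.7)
The plan is to establish the inequality via the standard three-step pipeline used in modern variants of the Davis--Kahan theorem (following Yu--Wang--Samworth): convert $\|V - V^* Q^*\|_F$ to a principal-angle quantity, bound that quantity using a Sylvester-equation argument, and finally replace the mixed eigenvalue gap by the $M^*$-only gap via Weyl's inequality.

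First, I would set up the principal-angle machinery. Let $r = \ell - k + 1$ and write the SVD $V^\top V^* = A (\cos\Theta) B^\top$, where $\cos\Theta = \diag(\cos\theta_1,\ldots,\cos\theta_r)$ with $\theta_1,\ldots,\theta_r \in [0, \tfrac{\pi}{2}]$ the principal angles between the column spaces. I would choose $Q^* = A B^\top \in \mathcal{O}(r)$, which is the maximizer of $\langle V^*Q, V\rangle$. A direct expansion gives
\begin{equation*}
\|V - V^* Q^*\|_F^2 = 2r - 2\Tr(\cos\Theta) = 4\sum_{i=1}^r \sin^2(\theta_i/2) \le 2 \sum_{i=1}^r \sin^2\theta_i = 2\|\sin\Theta\|_F^2,
\end{equation*}
where the inequality uses $\cos^2(\theta/2) \ge \tfrac{1}{2}$ for $\theta \in [0,\tfrac{\pi}{2}]$. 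Thus $\|V - V^* Q^*\|_F \le \sqrt{2}\,\|\sin\Theta\|_F$. The identity $\|\sin\Theta\|_F = \|V^\top V_\perp^*\|_F$, where $V_\perp^* \in \mathbb{R}^{N\times(N-r)}$ is an orthonormal basis of $\spn(V^*)^\perp$ consisting of eigenvectors of $M^*$ with eigenvalues outside $\{\lambda_k^*,\ldots,\lambda_\ell^*\}$, will then reduce the problem to bounding $\|V^\top V_\perp^*\|_F$ (and analogously $\|V^\top V_\perp^*\|$ for the operator-norm version).

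Second, I would run the Sylvester-equation argument to bound $\|V^\top V_\perp^*\|_F$. Because $MV = V\Lambda$ with $\Lambda = \diag(\lambda_k,\ldots,\lambda_\ell)$ and $M^* V_\perp^* = V_\perp^* \Lambda_\perp^*$ with $\Lambda_\perp^*$ collecting the remaining eigenvalues of $M^*$, subtracting gives
\begin{equation*}
\Lambda\, (V^\top V_\perp^*) - (V^\top V_\perp^*)\,\Lambda_\perp^* = V^\top (M - M^*)\,V_\perp^*.
\end{equation*}
Since the spectra of $\Lambda$ and $\Lambda_\perp^*$ are separated by at least $\delta_{\mathrm{mix}} := \min\{\lambda_{k-1}^* - \lambda_k,\; \lambda_\ell - \lambda_{\ell+1}^*\}$, the Sylvester operator $X \mapsto \Lambda X - X \Lambda_\perp^*$ is invertible with norm at most $\delta_{\mathrm{mix}}^{-1}$ in both the Frobenius and operator norms. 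This yields $\|V^\top V_\perp^*\|_F \le \|M - M^*\|_F / \delta_{\mathrm{mix}}$ and $\|V^\top V_\perp^*\| \le \|M - M^*\|/\delta_{\mathrm{mix}}$; combining the latter with $\|\sin\Theta\|_F \le \sqrt{r}\,\|\sin\Theta\|$ gives the operator-norm branch.

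Third, I would pass from the mixed gap $\delta_{\mathrm{mix}}$ to the $M^*$-only gap $\delta := \min\{\lambda_{k-1}^* - \lambda_k^*,\,\lambda_\ell^* - \lambda_{\ell+1}^*\}$ at the cost of an extra factor of $2$. The case analysis is: if $\|M-M^*\| \le \delta/2$, then Weyl's inequality gives $|\lambda_j - \lambda_j^*| \le \delta/2$ for every $j$, whence $\delta_{\mathrm{mix}} \ge \delta/2$ and Step 2 yields bounds with $\delta$ replaced by $\delta/2$. If instead $\|M - M^*\| > \delta/2$, then the trivial bound $\|V - V^* Q^*\|_F \le \|V\|_F + \|V^* Q^*\|_F = 2\sqrt{r}$ already satisfies
\begin{equation*}
\|V - V^* Q^*\|_F \le 2\sqrt{r} \le \frac{2\sqrt{2}\sqrt{r}\,\|M-M^*\|}{\delta},
\end{equation*}
which dominates (and since $\|M-M^*\|\le \|M-M^*\|_F$, the $\|\cdot\|_F$ branch follows likewise). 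Combining these cases with the reduction $\|V - V^* Q^*\|_F \le \sqrt{2}\|\sin\Theta\|_F$ of Step 1 and the two bounds on $\|\sin\Theta\|_F$ of Step 2 produces the claimed inequality with the factor $2\sqrt{2}$ and the minimum of the two perturbation-norm quantities. The main delicate point is the Sylvester-inversion step and the verification that the operator norm of the Sylvester inverse is indeed $\delta_{\mathrm{mix}}^{-1}$ for \emph{both} the Frobenius and spectral norms; the rest is careful bookkeeping of constants and sign conventions in the SVD-based construction of $Q^*$.
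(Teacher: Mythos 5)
First, a point of reference: the paper does not prove this theorem at all --- it is imported verbatim from \cite{yu2015useful} and used as a black box in the proof of Proposition~\ref{prop:spectral}. So the relevant comparison is with the standard proof in that reference, whose architecture you have correctly identified and largely reconstructed: the Procrustes/principal-angle reduction giving the factor $\sqrt{2}$, the Sylvester-equation (classical Davis--Kahan) bound with the mixed gap $\delta_{\mathrm{mix}}$, and the Weyl step converting $\delta_{\mathrm{mix}}$ into the $M^*$-only gap $\delta$.

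The genuine gap is in your Step 3, the case $\nm{M-M^*}>\delta/2$. Two problems. (i) Your trivial bound $\nm{V-V^*Q^*}_F\le 2\sqrt{r}$ is too weak even for the operator-norm branch: you need $2\sqrt{r}\le 2\sqrt{2}\sqrt{r}\,\nm{M-M^*}/\delta$, which requires $\nm{M-M^*}\ge \delta/\sqrt{2}$, not merely $>\delta/2$. The fix is the sharper bound $\nm{V-V^*Q^*}_F^2 = 2r-2\Tr(\cos\Theta)\le 2r$, valid for the Procrustes-optimal $Q^*$ since $\Tr\cos\Theta\ge 0$; the resulting $\sqrt{2r}$ is exactly dominated when $\nm{M-M^*}\ge\delta/2$. (ii) More seriously, the Frobenius branch does not ``follow likewise'': the inequality $\nm{M-M^*}\le\nm{M-M^*}_F$ points the wrong way for your purpose. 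To dominate the trivial bound $\sqrt{2r}$ you would need $\nm{M-M^*}_F\ge\sqrt{r}\,\delta/2$, whereas $\nm{M-M^*}>\delta/2$ only yields $\nm{M-M^*}_F>\delta/2$ --- a factor $\sqrt{r}$ short whenever $r>1$. In the regime $\delta/2<\nm{M-M^*}\le\nm{M-M^*}_F<\sqrt{r}\,\delta/2$ your argument therefore establishes neither comparison needed for the minimum on the right-hand side, and this is precisely where the careful treatment in \cite{yu2015useful} is required. Two smaller points: with the SVD $V^\top V^*=A(\cos\Theta)B^\top$, the maximizer of $\langle V^*Q,V\rangle$ is $Q^*=BA^\top$, not $AB^\top$; and the operator-norm Sylvester inversion you flag as delicate can be bypassed entirely, since $V^\top(M-M^*)V^*_\perp$ has rank at most $r$ and hence Frobenius norm at most $\sqrt{r}\,\nm{M-M^*}$, so the elementary entrywise Frobenius bound on the Sylvester inverse already yields both branches in the small-perturbation case.
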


To begin, let $\G \subseteq \mathcal{O}(d)$ be the closed subgroup of interest and $\eta > 0$ be a given parameter. Consider taking $Z = C$, $Z^* = \eta \cdot G^* {G^*}^\top$, $k = 1$, $\ell = d$ in Theorem~\ref{thm:Davis-Kahan}, where, as before, $C \in \RR^{nd \times nd}$ is the matrix defined in~\eqref{eq:def-C} and $G^* \in \G^n$ is the ground truth. Note that each of the $d$ columns of $\frac{1}{\sqrt{n}}G^*$ is an eigenvector of $\eta \cdot G^* {G^*}^\top$ with eigenvalue $\eta n$. The remaining $(n-1)d$ eigenvalues of $\eta \cdot G^* {G^*}^\top$ are all equal to 0. Hence, by letting $V_C\in \RR^{nd\times d}$ to be the matrix whose $j$-th column is the eigenvector associated with the $j$-th largest eigenvalue of $C$ for $j = 1,\ldots,d$, we deduce from Theorem~\ref{thm:Davis-Kahan} the existence of a $Q^* \in \mathcal{O}(d)$ that satisfies
\begin{equation}\label{ineq:13}
\begin{split}
\nm{ V_C Q^* - \frac{1}{\sqrt{n}} G^*  }_F & \le \frac{ 2\sqrt{2} \min\left\lbrace \sqrt{d} \nm{ C - \eta \cdot G^* {G^*}^\top }, \nm{ C - \eta \cdot G^* {G^*}^\top }_F \right\rbrace }{\min\lbrace \lambda_{k-1}^* - \lambda_k^*, \lambda_\ell^* - \lambda_{\ell+1}^* \rbrace } \\ 
& \le \frac{ 2\sqrt{2d} \nm{ C - \eta \cdot G^* {G^*}^\top } }{\eta n } .
\end{split}
\end{equation}
This, together with the definition of $\varepsilon$ in~\eqref{eq:est_err}, Lemma~\ref{lem:1}, inequality~\eqref{ineq:contraction} and inequality~\eqref{ineq:13}, implies that $\Pi^n (V_C Q^*)$ enjoys the estimation error bound
\begin{equation}\label{ineq:29}
\begin{split}
\varepsilon( \Pi^n (V_C Q^*) ) &\le \nm{ \Pi^n ( V_C Q^*) - G^* }_F  = \nm{ \Pi^n ( \sqrt{n} \cdot V_C Q^*) - G^* }_F \\
& \le 2 \nm{  \sqrt{n} \cdot V_C Q^* - G^* }_F  \le  \frac{ 4\sqrt{2d} }{ \eta \sqrt{n} } \nm{ C - \eta \cdot G^* {G^*}^\top }.
\end{split}
\end{equation}

Unfortunately, the estimator $\Pi^n (V_C\, Q^*)$ is not implementable as we do not know $Q^*$ in general. To work around this, let us construct an approximation $\widetilde{Q}$ of the unknown $Q^*$ as follows. Suppose that we have a finite subset $\mathcal{Q}\subseteq \mathcal{O}(d)/\G$ satisfying
\begin{equation}\label{ineq:30}
\min_{[Q] \in \mathcal{Q}} \text{dist} ([Q], [Q^*]) = \min_{[Q] \in \mathcal{Q}} \min_{Q'\in \G} \| Q - Q^* Q' \|_F \le \epsilon.
\end{equation}
By definition, there must exist $[\widetilde{Q}] \in \mathcal{Q}$ and $Q'\in \G$ such that 
\begin{equation}\label{def:Q_tilde}
\nm{ \widetilde{Q} - Q^* Q' }_F \le \epsilon.
\end{equation}
We therefore consider the estimator
\begin{equation}\label{def:G_tilde}
\widetilde{G} = \Pi^n (V_C \widetilde{Q}).
\end{equation}
Let us now study the estimation performance of $\widetilde{G}$.
\begin{prop}\label{prop:G_tilde}
Let $ \eta > 0$, $\epsilon \ge 0$ be given parameters and $\mathcal{Q}\subseteq\mathcal{O}(d)/\G$ be a finite subset of equivalent classes satisfying~\eqref{ineq:30}. Consider the estimator $\widetilde{G}$ defined in~\eqref{def:G_tilde}. Then,
\begin{equation*}
\varepsilon (\widetilde{G}) \le 2 \sqrt{n} \epsilon + \frac{ 4\sqrt{2d} }{ \eta \sqrt{n} } \nm{ C - \eta \cdot G^* {G^*}^\top }.
\end{equation*}
\end{prop}
\begin{proof}[Proof]
We have
\begin{equation*}
\begin{split}
\varepsilon ( \widetilde{G} ) & = \min_{Q\in \G }\nm{ \widetilde{G} - G^* Q }_F \le \nm{ \Pi^n ( V_C \widetilde{Q} ) - G^*Q' }_F \le 2 \sqrt{n} \nm{ V_C \widetilde{Q} - \frac{1}{\sqrt{n}}G^* Q' }_F \\
& \le  2\sqrt{n} \left( \nm{ V_C \widetilde{Q} - V_C Q^* Q' }_F + \nm{ V_C Q^* Q' - \frac{1}{\sqrt{n}} G^* Q' }_F \right) \\
& \le 2\sqrt{n} \left( \nm{V_C} \cdot \nm{ \widetilde{Q} - Q^* Q' }_F + \nm{ V_C Q^* - \frac{1}{\sqrt{n}} G^* }_F \right) \\
& \le 2 \sqrt{n} \epsilon + \frac{ 4\sqrt{2d} }{ \eta \sqrt{n} } \nm{ C - \eta \cdot G^* {G^*}^\top },
\end{split}
\end{equation*}
where the first inequality follows from~\eqref{def:G_tilde}; the second inequality follows from Lemma~\ref{lem:1} and inequality~\eqref{ineq:contraction}; the last inequality follows from~\eqref{ineq:13},~\eqref{def:Q_tilde}, and the fact that $\| V_C \| \le 1$. This completes the proof.
\end{proof}
Compared with the bound~\eqref{ineq:29} on $\varepsilon( \Pi^n (V_C Q^*) )$, we see that the bound on $\varepsilon ( \widetilde{G} )$ has an extra term that is on the order of $\sqrt{n}\epsilon$. This can be attributed to the error incurred when using an element of an $\epsilon$-net of the quotient $\mathcal{O}(d)/\G$ to approximate the unknown element $[Q^*]$. 
However, since the estimator $\widetilde{G}$ relies on an element $\widetilde{Q}$ that validates inequality~\eqref{def:Q_tilde}, which involves the unknown orthogonal matrix $Q^*$, it is still not implementable. Nevertheless, the above idea can be further developed to construct another estimator that not only is implementable but also enjoys a good estimation error bound. Specifically, motivated by the objective function of the least squares formulation~\eqref{opt:LS}, let us consider the function $\psi: \mathcal{O}(d)/\G \to \RR$ defined by
\[ \psi ([Q]) := \langle C \Pi^n (V_C Q) , \Pi^n (V_C Q) \rangle .\]
Since $\Pi^n (V_C Q Q') = \Pi^n (V_C Q)Q'$ for any $Q'\in \mathcal{G}$, the function $\psi$ is independent of the choice of the representative $Q$ of the equivalent class $[Q]$. This shows that $\psi$ is a well-defined function on $\mathcal{O}(d)/\G$. Partly inspired by the work~\cite{wu2017sdr}, in which a randomized rounding scheme for approximating the optimal solution to certain robust non-convex quadratic optimization problem is analyzed using an $\epsilon$-net of the sphere, we propose the following estimator:
\begin{algorithm}[H]
\caption{Entropic Spectral Estimator} \label{alg:spec}
\begin{algorithmic}[1]
\State \textbf{Input:} the matrix $C$ and a finite subset $\mathcal{Q}\subseteq \mathcal{O}(d)/\G$ satisfying~\eqref{ineq:30} for some $\epsilon \ge 0$
\State find the maximizer $[\widehat{Q}]$ of the problem 
\begin{equation*}
\max_{[Q] \in \mathcal{Q}} \psi ([Q])
\end{equation*}
\State \textbf{Output:} the matrix $\widehat{G} = \Pi^n (V_C \widehat{Q})$
\end{algorithmic}
\end{algorithm}

A key difference between the estimators $\widetilde{G}$ and $\widehat{G}$ is that the former requires the knowledge of an element $\widetilde{Q}$ satisfying inequality~\eqref{def:Q_tilde}, whereas the latter works as long as there exists one and we do not need to know which element it is.

Let us discuss how to construct a subset $\mathcal{Q}\subseteq \mathcal{O}(d)/\G$ satisfying~\eqref{ineq:30}. 
When $\G = \mathcal{O}(d)$, there is only one equivalent class, \viz, the orthogonal group $\mathcal{O}(d)$ itself. In this case, the entropic spectral estimator $\widehat{G}$ reduces to the spectral estimator in the works \cite{boumal2016nonconvex} and \cite{pachauri2013solving}. Therefore, the entropic spectral estimator can be seen as a generalization of these spectral estimators. When $\G = \mathcal{SO}(d)$, there are two equivalent classes: One formed by the set of orthogonal matrices with determinant $+1$ and the other with determinant $-1$. Taking the representatives $I_d$ and $\Diag(-1, 1,\dots, 1)$ for these two equivalent classes, respectively, the entropic estimator is either $\Pi^n(V_C)$ or $\Pi^n (V_C')$ with $V_C' = V_C \cdot \Diag(-1,1,\ldots,1)$, depending on whether $\langle C \Pi^n (V_C ) , \Pi^n (V_C ) \rangle \ge \langle C \Pi^n (V_C' ) , \Pi^n (V_C' ) \rangle$.
If $\G$ is a discrete subgroup of $\mathcal{O}(d)$, then we can find a desired subset $\mathcal{Q}$ by invoking Propositions~\ref{prop:entropy} and \ref{prop:discrete_entropy}. Specifically, these two propositions imply that there exists a constant $c>0$ such that given any $\rho \in (0,1)$, if we generate 
\begin{equation}
\label{eq:K}
K \ge \frac{\log \rho}{\log\left( 1 - |\G| \left( c \epsilon \right)^{\frac{d(d-1)}{2}} \right)}
\end{equation}
random orthogonal matrices $Q_1,\ldots,Q_K$ that are independently and uniformly distributed on $\mathcal{O}(d)$, then with probability at least $1-\rho$, the set $\mathcal{Q} = \{[Q_1], \dots, [Q_K]\}$ satisfies inequality~\eqref{ineq:30}.
%
%

Interestingly, Proposition~\ref{prop:entropy} reveals an intimate relation between our proposed estimator and the notion of \emph{metric entropy} (the logarithm of covering number): The smaller the metric entropy of the quotient $\mathcal{O}(d)/\G$, the fewer independent copies of uniformly distributed random orthogonal matrices we need to construct the subset $\mathcal{Q}$. This explains why we name our estimator $\widehat{G}$ the \emph{entropic spectral estimator}.


The main result of this subsection is the following theorem, which concerns the estimation performance of the entropic spectral estimator.
\begin{thm}
\label{thm:spec_init_general}
Suppose that the group $\G$ satisfies Condition~\ref{cond:beta} with parameter $\beta \in (0,1]$. Let $\eta >0$ and $\epsilon \ge 0$ be given parameters. 
Then, the entropic spectral estimator $\widehat{G}$ returned by Algorithm~\ref{alg:spec} satisfies
\[ \varepsilon (\widehat{G}) \le 2 \sqrt{\frac{2n}{\beta}} \epsilon + \frac{12 \sqrt{d}}{\beta \eta \sqrt{n}} \nm{ C - \eta\cdot G^* {G^*}^\top }. \]
\end{thm}

\noindent Theorem~\ref{thm:spec_init_general} shows that even though we do not have access to the element $[\widetilde{Q}] \in \mathcal{Q}$ defined in~\eqref{def:Q_tilde} and hence cannot construct the estimator $\widetilde{G}$ in~\eqref{def:G_tilde}, we can get hold of another element $[\widehat{Q}] \in \mathcal{Q}$ by maximizing the least squares-based function $\psi$ over $\mathcal{Q}$ and use it to construct the estimator $\widehat{G}$, whose estimation error bound is worse than that of the estimator $\widetilde{G}$ by roughly a factor of $\frac{1}{\sqrt{\beta}}$ (recall that the parameter $\beta \in (0,1]$ is related to the geometry of the subgroup $\G$, see Condition~\ref{cond:beta}). As shown in Theorem~\ref{thm:group}, for many groups of interest (such as the orthogonal group $\mathcal{O}(d)$, the special orthogonal group $\mathcal{SO}(d)$, and the cyclic group $\mathcal{Z}_m$), the parameter $\beta$ is a constant, which implies that the bounds on $\varepsilon (\widetilde{G})$ and $\varepsilon (\widehat{G})$ differ by at most a constant factor.

\begin{proof}[Proof of Theorem~\ref{thm:spec_init_general}]
Let $\widetilde{G}$ be defined as in \eqref{def:G_tilde}. By definition of the entropic spectral estimator $\widehat{G}$, 
\begin{equation}
\label{ineq:trace-max}
\Tr\big( \widehat{G}^\top C \widehat{G} \big) \ge \Tr\big( \widetilde{G}^\top C\widetilde{G} \big).
\end{equation}
Upon letting $\Delta_\eta = C - \eta\cdot G^* {G^*}^\top$, for any $Q\in \G$, we have
\begin{equation*}
\begin{split}
			&\,\Tr \left({\widehat{G}}^\top \Delta_\eta \widehat{G} \right) - \Tr \left({G^*}^\top \Delta_\eta G^* \right)  =  \Tr \left((\widehat{G} Q - G^*)^\top \Delta_\eta (\widehat{G} Q + G^*) \right) \\
			\leq &\,  \nm{ \widehat{G} Q - G^* }_F \cdot \nm{ \Delta_\eta } \cdot \left( \nm{ \widehat{G} Q }_F + \|G^*\|_F \right) \leq 2\sqrt{nd} \cdot \nm{ \Delta_\eta } \cdot \nm{ \widehat{G} Q - G^* }_F.
\end{split}
\end{equation*}
This implies that
\begin{equation}\label{ineq:trace-G}
\begin{split}
\Tr \left({\widehat{G}}^\top C \widehat{G} \right) & = \eta \cdot \nm{ {G^*}^\top \widehat{G} }^2_F +  \Tr \left({\widehat{G}}^\top \Delta_\eta \widehat{G} \right) \\
& \leq \eta \cdot \nm{ {G^*}^\top \widehat{G} }^2_F + \Tr \left({G^*}^\top \Delta_\eta G^* \right) + 2\sqrt{nd} \cdot \|\Delta_\eta\| \cdot \varepsilon(\widehat{G}).
\end{split}		
\end{equation}
Similarly, we can get
\begin{equation}\label{ineq:new:1}
\begin{split}
\Tr \left({\widetilde{G}}^\top C \widetilde{G} \right) & = \eta \cdot \nm{ {G^*}^\top \widetilde{G} }^2_F +  \Tr \left({\widetilde{G}}^\top \Delta_\eta \widetilde{G} \right) \\
& \geq \eta \cdot \nm{ {G^*}^\top \widetilde{G} }^2_F + \Tr \left({G^*}^\top \Delta_\eta G^* \right) - 2 \sqrt{nd} \cdot \|\Delta_\eta\| \cdot \varepsilon (\widetilde{G}).
\end{split}
\end{equation}
Now, for any $G \in \G^n$, we have the identity
\begin{equation}\label{eq:identity}
\varepsilon(G)^2 = \nm{ G - G^* Q_G }_F^2 = 2\Tr\left(n\cdot I_d - Q_G^\top {G^*}^\top G \right),
\end{equation}
where $Q_G \in \G \subseteq \mathcal{O}(d)$ is defined in~\eqref{def:Qt}. This yields
\[ 
\nm{ {G^*}^\top \widetilde{G} }_F^2 = \nm{ Q_{\widetilde{G}}^\top {G^*}^\top \widetilde{G} }_F^2 \ge \frac{1}{d}\left( \Tr\left( Q_{\widetilde{G}}^\top {G^*}^\top \widetilde{G} \right)\right)^2 = \left( n\sqrt{d} - \frac{1}{2\sqrt{d}} \varepsilon (\widetilde{G})^2 \right)^2,
\]
where the inequality follows from the Cauchy-Schwarz inequality. Upon substituting the above into \eqref{ineq:new:1}, we obtain
\begin{equation}\label{ineq:trace-G0}
		\begin{split}
			\Tr \left(\widetilde{G}^\top C \widetilde{G} \right) 
			& \geq \eta \left(n\sqrt{d} - \frac{1}{2\sqrt{d}} \varepsilon (\widetilde{G})^2 \right)^2 + \Tr \left({G^*}^\top \Delta_\eta G^* \right) - 2 \sqrt{nd} \cdot \|\Delta_\eta\| \cdot \varepsilon (\widetilde{G}) \\
			& \geq \eta n^2 d - \eta n \cdot \varepsilon (\widetilde{G})^2 + \Tr \left({G^*}^\top \Delta_\eta G^* \right) - 2 \sqrt{nd} \cdot \|\Delta_\eta\| \cdot \varepsilon (\widetilde{G}).
		\end{split}
\end{equation}
It follows from~\eqref{ineq:trace-max},~\eqref{ineq:trace-G}, and~\eqref{ineq:trace-G0} that
\begin{equation}\label{ineq:23}
\begin{split}
\eta \nm{ {G^*}^\top \widehat{G} }^2_F & \ge \Tr \left({\widehat{G}}^\top C \widehat{G} \right) - \Tr \left({G^*}^\top \Delta_\eta G^* \right) - 2\sqrt{nd} \cdot \|\Delta_\eta\| \cdot \varepsilon(\widehat{G}) \\
& \ge \Tr \left({\widetilde{G}}^\top C \widetilde{G} \right) - \Tr \left({G^*}^\top \Delta_\eta G^* \right) - 2\sqrt{nd} \cdot \|\Delta_\eta\| \cdot \varepsilon(\widehat{G}) \\
&\geq \eta n^2d - \eta n \cdot \varepsilon (\widetilde{G})^2 - 2 \sqrt{nd} \cdot \|\Delta_\eta\| \cdot \left(\varepsilon (\widehat{G}) + \varepsilon (\widetilde{G})\right).
\end{split}
\end{equation}
Next, using the definition of $Q_{\widehat{G}}$ (see~\eqref{def:Qt}), identity~\eqref{eq:identity}, Condition~\ref{cond:beta} (with $X=\frac{1}{n}{G^*}^\top\widehat{G}$), and inequality~\eqref{ineq:23}, we have
\begin{equation*}
\begin{split}
\frac{\beta}{2} \varepsilon(\widehat{G})^2 & = \beta \Tr \left( n\cdot I_d - Q_{\widehat{G}}^\top {G^*}^\top \widehat{G} \right) \le nd - \frac{1}{n} \nm{ {G^*}^\top \widehat{G} }_F^2 \\
& \le \varepsilon (\widetilde{G})^2 +  \frac{2\sqrt{d} \cdot \|\Delta_\eta\|}{\eta \sqrt{n}} \left(\varepsilon (\widetilde{G}) + \varepsilon (\widehat{G}) \right).
\end{split}
\end{equation*}
This, together with the fact that $\beta \in (0,1]$, gives
\begin{align*}
\left( \frac{\beta}{2} \varepsilon(\widehat{G}) - \frac{\sqrt{d} \cdot \|\Delta_\eta\|}{\eta \sqrt{n}} \right)^2 & = \frac{\beta^2}{4} \varepsilon(\widehat{G})^2 - \frac{\beta \sqrt{d} \cdot \|\Delta_\eta\|}{\eta \sqrt{n}} \varepsilon(\widehat{G}) + \left( \frac{\sqrt{d} \cdot \|\Delta_\eta\|}{\eta \sqrt{n}} \right)^2 \\
& \le \frac{\beta}{2} \varepsilon (\widetilde{G})^2 + \frac{\beta\sqrt{d} \cdot \|\Delta_\eta\|}{\eta \sqrt{n}} \varepsilon(\widetilde{G}) + \left( \frac{\sqrt{d} \cdot \|\Delta_\eta\|}{\eta \sqrt{n}} \right)^2 \\
& \le \left( \sqrt{\frac{\beta}{2}} \varepsilon(\widetilde{G}) +  \frac{\sqrt{d} \cdot \|\Delta_\eta\|}{\eta \sqrt{n}} \right)^2.
\end{align*}
It follows that
\begin{align*}
\varepsilon (\widehat{G}) & \le \sqrt{\frac{2}{\beta}} \varepsilon(\widetilde{G}) + \frac{4\sqrt{d}}{\beta \eta \sqrt{n}}\|\Delta_\eta\| \le \sqrt{\frac{2}{\beta}}\left( 2 \epsilon \sqrt{n} + \frac{ 4\sqrt{2d} }{\eta \sqrt{n} } \nm{ \Delta_\eta } \right) + \frac{4\sqrt{d}}{\beta \eta \sqrt{n}}\|\Delta_\eta\| \\
& \le 2 \sqrt{\frac{2n}{\beta}} \epsilon + \frac{4\sqrt{d}}{\eta\sqrt{n}} \left( 2\sqrt{\frac{1}{\beta}} + \frac{1}{\beta} \right) \|\Delta_\eta\| \le 
2 \sqrt{\frac{2n}{\beta}} \epsilon + \frac{12 \sqrt{d}}{\beta \eta \sqrt{n}} \|\Delta_\eta\|,
\end{align*}
where the second inequality follows from Proposition~\ref{prop:G_tilde}. This completes the proof.
\end{proof}

Armed with Theorem~\ref{thm:spec_init_general}, we now show that when the measurement graph and additive noise follow the settings in Sections~\ref{sec:graph} and~\ref{sec:noise}, respectively, condition~\eqref{master_condi_iv} in the master theorem will be satisfied with high probability.
\begin{thm}\label{thm:init}
Suppose that (i) the group $\G$ satisfies Conditions~\ref{cond:alpha} and~\ref{cond:beta} with parameters $\alpha \ge 1$ and $\beta \in (0,1]$, respectively; (ii) the measurement graph is an Erd{\H{o}}s-R{\'e}nyi random graph with observation rate $p \in (0,1]$; (iii) the noise matrices $\{\Theta_{ij} : 1 \le i < j \le n\}$ are independent of each other and of the measurement graph, and whose entries are i.i.d. sub-Gaussian random variables with parameter $\sigma>0$. 
Then, there exist constants $c_0,c_1,c_2,c_3 >0$ such that when $p \ge   c_0 \cdot \max\left\{ \frac{\alpha^2d}{\beta^2n}, \frac{(\log n)^2}{n} \right\}$ and $\sigma \le \frac{c_1\beta\sqrt{pn}}{\alpha d }$, 
the entropic spectral estimator $\widehat{G}$ generated by Algorithm~\ref{alg:spec} with $\epsilon \le \frac{c_2\sqrt{\beta}}{\alpha}$ will satisfy
\begin{equation*}
\Pr\left( \varepsilon ( \widehat{G} ) \le \frac{\sqrt{n}}{2\alpha}  \right) \ge  1 - \frac{c_3}{n} .
\end{equation*}
If the sub-Gaussian entries $\{\Theta_{ij}: 1\le i < j\le n\}$ are zero-mean Gaussian with standard deviation $\sigma >0$, then the same inequality holds under the weaker requirement $p \ge   c_0 \cdot \max\left\{ \frac{\alpha^2d}{\beta^2n}, \frac{\log n}{n} \right\}$.
\end{thm}

\begin{proof}
Let $C^*\in \mathbb{R}^{nd\times nd}$ be the block matrix defined by $[C^*]_{ij} = w_{ij} G^*_i {G^*_j}^\top$, $i,j = 1,\dots, n$ and $\Delta^* := C^* - p \cdot G^* {G^*}^\top$. Using the definition of $\Delta$ in~\eqref{eq:Delta-def}, we can write 
\[ C - p \cdot G^* {G^*}^\top = \Delta^* + C - C^* = \Delta^* + \Delta. \]
Thus, by invoking Theorem~\ref{thm:spec_init_general} with $\eta = p$ and $\epsilon \le \frac{\sqrt{\beta}}{ 8\sqrt{2} \alpha }$, we have
\begin{equation} \label{eq:est-err-prebd}
\varepsilon (\widehat{G}) \le 2 \sqrt{\frac{2n}{\beta}} \epsilon + \frac{12 \sqrt{d}}{\beta p \sqrt{n}} \nm{ C - p \cdot G^* {G^*}^\top } \le \frac{\sqrt{n}}{4\alpha} + \frac{12 \sqrt{d}}{\beta p \sqrt{n}} ( \|\Delta^*\| + \|\Delta\| ).
\end{equation}
Let us now bound $\|\Delta^*\|$ and $\|\Delta\|$ separately.

By Proposition~\ref{prop:Delta_norm}, there exist constants $c_0, c_1, c_2>0$ such that for any $p \ge \frac{c_0 (\log n)^2}{n}$ ($p \ge \tfrac{c_0\log n}{n}$ in the Gaussian case), we have 
\begin{align*}
\Pr\left( \|\Delta\| \ge c_1 \sigma\sqrt{pnd} \right) \le \frac{c_2}{n}.
\end{align*}
On the other hand, observe that
\[ \Delta^* = \left( \bar{W} - p \cdot ee^\top \right) \boxtimes \left( G^*{G^*}^\top \right), \]
where $\bar{W} \in \RR^{n \times n}$ is the matrix defined in Section~\ref{sec:master} and $\boxtimes$ denotes the block Kronecker product introduced in~\cite[Definition 2.5]{horn1992block}. Since $G^*{G^*}^\top$ is positive semidefinite with $\left[ G^*{G^*}^\top \right]_{ii} = I_d$ for $i=1,\ldots,n$, by~\cite[Corollary 4.5(b)]{horn1992block} and the identity $\bar{W} - p \cdot ee^\top = \bar{W} - \mathbb{E}[\bar{W}] + (1-p) I_n$, we have
\[ \| \Delta^* \| \le \nm{ \bar{W} - p \cdot ee^\top } \le 1 + \nm{ \bar{W} - \mathbb{E}[\bar{W}] }. \]
This, together with~\eqref{eq:ctr-adj-bd}, implies that for any $p \ge \frac{\log n}{n}$,
\[ \Pr\left( \| \Delta^* \| \ge 173\sqrt{pn} \right) \le \exp\left( -\frac{pn}{8} \right). \]
The above calculations yield the existence of constants $c_3, c_4, c_5 >0$ such that whenever
\[ p \ge c_3 \max\left\{ \frac{  \alpha^2 d }{\beta^2 n}, \frac{ \log n}{n} \right\}, \quad \sigma \le \frac{ c_4 \beta\sqrt{pn} }{  \alpha d  }, \]
we will have
\[ \frac{12 \sqrt{d}}{\beta p \sqrt{n}} \| \Delta^* \| \le \frac{\sqrt{n}}{8\alpha} \quad\text{and}\quad \frac{12 \sqrt{d}}{\beta p \sqrt{n}} \| \Delta \| \le \frac{\sqrt{n}}{8\alpha} \]
with probability at least $1-\frac{c_5}{n}$. Upon substituting these bounds into~\eqref{eq:est-err-prebd}, we obtain $\varepsilon (\widehat{G}) \le \frac{\sqrt{n}}{2\alpha}$.
\end{proof}

We remark that there are works studying the estimation performance of various spectral estimators for $\mathcal{O}(d)$-\sync~\cite{ling2020near}, $\mathcal{SO}(2)$-\sync~\cite{boumal2016nonconvex}, and $\mathcal{P}(d)$-\sync~\cite{pachauri2013solving,shen2016normalized,bajaj2018smac,ling2020near}, and it is worth comparing the results for these estimators with that for our entropic spectral estimator. For $\mathcal{O}(d)$-\sync, 
the work~\cite{ling2020near} establishes a bound on the estimation error, measured in the \emph{operator norm}, of each block $[\widehat{G}]_1,\ldots,[\widehat{G}]_n$ of its proposed spectral estimator $\widehat{G}$. By contrast, our work establishes a bound on the estimation error, measured in the \emph{Frobenius norm}, of the proposed entropic spectral estimator in its entirety. Although for $\mathcal{O}(d)$-\sync the blockwise error bound in~\cite[Theorem 3.1]{ling2020near} is generally sharper than the error bound in Theorem~\ref{thm:init} of our work, the former applies only to the setting of \emph{complete} measurement graph and additive \emph{Gaussian} noise, while the latter applies to the more general setting of additive \emph{sub-Gaussian} noise and \emph{Erd{\H{o}}s-R{\'e}nyi} measurement graph with observation rate $p$ that can go down to the order of $\frac{(\log n)^2}{n} $ (if we fix the group $\G$ and hence the parameters $d$, $\alpha$, and $\beta$). For $\mathcal{SO}(2)$-\sync, the work~\cite{boumal2016nonconvex} studies the estimation error of a spectral estimator under the additive noise model with a \emph{complete} measurement graph. Since such a setting is covered by that of Theorem~\ref{thm:spec_init_general}, we can compare the corresponding estimation error bounds. Recall from our discussion immediately following Theorem~\ref{thm:init} that we may take $\epsilon = 0$. Moreover, we have $\beta = \frac{1}{2}$ by Theorem~\ref{thm:group}. Thus, Theorem~\ref{thm:spec_init_general} (with $\epsilon=0$, $d=2$, $\beta=\frac{1}{2}$, $\eta=1$) and the definition of $\Delta$ in~\eqref{eq:Delta-def} imply that the estimation error of the entropic spectral estimator is at most on the order of $\frac{\|\Delta\|}{\sqrt{n}}$, which is the same as that of the spectral estimator in~\cite{boumal2016nonconvex}; see~\cite[Lemma 6]{boumal2016nonconvex}. Lastly, for $\mathcal{P}(d)$-\sync, the works~\cite{pachauri2013solving,shen2016normalized,bajaj2018smac,ling2020near} consider an outlier noise model, which is different from the additive noise model considered in our work. As such, the corresponding estimation error bounds cannot be compared directly.

\section{Estimation Error Bound}\label{sec:est_err}
The purpose of this section is to show that our approach enjoys near-optimal estimation error.
We start by proving a bound on the normalized deviation $[ D^{-1} \Delta G^* ]_i$, where $i \in [n]$, for a general measurement graph (not necessarily the Erd{\H{o}}s-R{\'e}nyi random graph).
\begin{lem}
\label{lem:deviation_bound}
Let $\{\Theta_{ij} : 1 \le i < j \le n\}$ be independent noise matrices whose entries are i.i.d. sub-Gaussian random variables with parameter $\sigma>0$. Then, for any $s \ge d$ and $i \in [n]$,
\begin{equation*}
\Pr\left( \nm{[ D^{-1} \Delta G^* ]_i}_F > \frac{\sigma s}{\sqrt{r_i}}  \right) \le \exp\left(- \frac{s^2}{2} \right).
\end{equation*}
\end{lem}

\begin{proof}
Consider a fixed $i \in [n]$. Note that 
\begin{align*}
[ D^{-1} \Delta G^* ]_i = \frac{1}{r_i} \sum_{j = 1}^n w_{ij} \Theta_{ij} G^*_j .
\end{align*}
Therefore, we have
\begin{align*}
\nm{[ D^{-1} \Delta G^* ]_i}_F \le  \frac{1}{r_i} \sum_{j=1}^n w_{ij} \nm{ \Theta_{ij} G^*_j }_F = \frac{1}{r_i} \sum_{j=1}^n w_{ij} \nm{ ( I_d \otimes G^*_j) \vect(\Theta_{ij}) }_F ,
\end{align*}
where $\vect(\Theta_{ij}) \in\mathbb{R}^{d^2}$ denotes the $d^2$-dimensional vector obtained by stacking the columns of $\Theta_{ij}$.
Since the entries of $\vect(\Theta_{ij})$ are zero-mean i.i.d. sub-Gaussian random variables with parameter $\sigma>0$ and
\[( I_d \otimes G^*_j)^\top ( I_d \otimes G^*_j) = ( I_d \otimes {G^*_j}^\top) ( I_d \otimes G^*_j) = I_d^2 \otimes ({G^*_j}^\top G^*_j) = I_{d^2},\]
it follows from \cite[Theorem~2.1]{hsu2012tail} that
\begin{align*}
\Pr\left( \nm{ ( I_d \otimes G^*_j) \vect(\Theta_{ij}) }_F^2 > \sigma^2 (d^2 + 2d\sqrt{t}  + 2t)  \right) \le \exp(-t).
\end{align*}
In particular, the above inequality implies that if $s \ge d$, then 
\begin{equation*}
\Pr\left(  \nm{ ( I_d \otimes G^*_j) \vect(\Theta_{ij}) }_F^2 > \sigma^2 s^2  \right) \le \exp\left(- \frac{s^2}{2} \right),
\end{equation*}
which, upon using $r_i = \sum_{j=1}^n w_{ij}$, yields
\begin{equation*}
\Pr\left( \nm{[ D^{-1} \Delta G^* ]_i}_F > \frac{\sigma s}{\sqrt{r_i}}  \right) \le \exp\left( -\frac{s^2}{2} \right).
\end{equation*}
This completes the proof.
\end{proof}

The following proposition provides an upper bound on the estimation error of our approach for discrete subgroups of the orthogonal group.
\begin{prop}
\label{prop:est_err_discrete}
Suppose that $\G$ is a discrete subgroup of $\mathcal{O}(d)$ with minimum separation $\tau$ and that the measurement graph is an Erd{\H{o}}s-R{\'e}nyi random graph with observation rate $p \in (0,1]$. Let $\{\Theta_{ij} : 1 \le i < j \le n\}$ be independent noise matrices that are independent of the measurement graph and whose entries are i.i.d. sub-Gaussian random variables with parameter $\sigma>0$.
Then, there exist constants $c_0,c_1,c_2,c_3,c_4>0$ such that whenever $\sigma \le \frac{c_0 \tau \sqrt{pn}}{d}$ and $p \le \frac{c_1\log n}{n}$, we have
\begin{align*}
&\, \nm{ \Pi^n \left( G^* + 2 D^{-1} \Delta G^* \right) - G^* }_F^2 \\
\le& \, c_3 \min\left\{  dn\exp\left(- \frac{c_2 \tau^2 pn}{\sigma^2}\right) + d\log n ,\, d n^2 \log n \exp\left(- \frac{c_2 \tau^2 pn}{\sigma^2}\right) \right\} ,
\end{align*}
with probability at least $1 - \frac{c_4}{n}$.
\end{prop}

Very recently, minimax rates for synchronization problems over discrete groups have been obtained in \cite{fei2020achieving} and \cite[Section~8]{gao2022iterative}. Their results imply that for the Boolean or permutation group with $p=1$, if the sub-Gaussian parameter $\sigma = \sigma_n$ in the noise satisfies $ \frac{n}{\sigma_n^2} \to \infty $ as $n\to \infty$, then the estimation error of any estimator $G$ is lower bounded by
\begin{align*}
\varepsilon^2 (G) \ge  n \exp\left( -\frac{ n (1 + o(1))}{2 \sigma^2}   \right).
\end{align*}
Moreover, in~\cite[Section~8]{gao2022iterative}, an iterative algorithm achieving a matching upper bound on the estimation error has been developed. Proposition~\ref{prop:est_err_discrete} implies that the estimator $G^\infty$ output by Algorithm~\ref{alg:GPM} achieves near-optimal estimation error. Indeed, for any discrete subgroup of $\mathcal{O}(d)$, using Theorems~\ref{thm:master}, \ref{thm:group}, \ref{thm:kappa}, \ref{thm:Delta_cond_random_graph}, \ref{thm:init} and the second bound in Proposition~\ref{prop:est_err_discrete}, we see that our estimator $G^\infty$ will satisfy the estimation error bound
\[ \varepsilon^2 (G^\infty) \le c_0 n \exp\left( -\frac{c_1 \tau^2 pn (1 - o(\log (nd)))}{\sigma^2}  \right)  \]
with probability converging to 1, where $c_0,c_1>0$ are some constants. This shows that our approach enjoys not only great flexibility but also near-optimal estimation error.

\begin{proof}[Proof of Proposition~\ref{prop:est_err_discrete}]
By the union bound and Chernoff's inequality for the lower tail~\cite[Exercise~2.3.2]{vershynin2018high}, there exist some constants $c_0, c_1, c_2 > 0$ such that whenever $p \ge \frac{c_0 \log n}{n}$, we have
\begin{equation}
\label{ineq:36}
\Pr\left( \min_{i \in [n] } r_i \ge c_1 pn \right) \ge 1- \frac{c_2}{n}.
\end{equation}
Also, if $\sigma^2 \le \frac{c_1 \tau^2 pn}{16 d^2}$, then by Lemma~\ref{lem:deviation_bound}, we have
\begin{align*}
\Pr\left( \nm{[ D^{-1} \Delta G^* ]_i}_F > \frac{\tau}{4} \,\Bigg|\,  \min_{i \in [n] } r_i \ge c_1 pn \right) \le \exp\left(- \frac{c_1 \tau^2 pn}{32 \sigma^2}\right).
\end{align*}
Since $\tau$ is the minimum separation, by the definition of the projection map $\Pi$ and using the above inequality, we have
\begin{equation}\label{ineq:37}
\Pr\left(\nm{ \Pi \left( G^*_i + 2 [D^{-1} \Delta G^*]_i \right) - G^*_i }_F > 0\,\Bigg|\,  \min_{i \in [n] } r_i \ge c_1 pn  \right) \le \exp\left(- \frac{c_1 \tau^2 pn}{32\sigma^2}\right).
\end{equation}
Moreover, we have $\nm{ \Pi \left( G^*_i + 2 [D^{-1} \Delta G^*]_i \right) - G^*_i }_F \le 2\sqrt{d} $.
Hence, by conditioning on the event $\min_{i \in [n]} r_i \ge c_1 pn$, the random variable 
\[ \frac{ 1 }{4d} \nm{ \Pi^n \left( G^* + 2 D^{-1} \Delta G^* \right) - G^* }_F^2 \]
is upper bounded\footnote{One can construct a binomial random variable $B$ on the same probability space as the random variable $\frac{ 1 }{4d} \nm{ \Pi^n \left( G^* + 2 D^{-1} \Delta G^* \right) - G^* }_F^2$ so that $\left(\frac{ 1 }{4d} \nm{ \Pi^n \left( G^* + 2 D^{-1} \Delta G^* \right) - G^* }_F^2\right) (\omega) \le B(\omega)$ for every outcome $\omega$ in the sample space.} by a binomial random variable with $n$ trials and success probability 
\[ p' = \exp\left(- \frac{c_1 \tau^2 pn}{32\sigma^2}\right) .\]
By Chernoff's inequality for the upper tail~\cite[Theorem 2.3.1]{vershynin2018high}, for any $t>0$,
\begin{align*}
\Pr\left( \frac{ 1 }{4d} \nm{ \Pi^n \left( G^* + 2 D^{-1} \Delta G^* \right) - G^* }_F^2 \ge p'n + t \,\Bigg|\,  \min_{i \in [n] } r_i \ge c_1 pn \right) \le \exp\left( - \frac{t^2}{2 (p'n+ \tfrac{t}{3})} \right).
\end{align*}
Taking $t = 2p'n + \log n $ and using~\eqref{ineq:36}, we get
\begin{align*}
\Pr\left( \frac{ 1 }{4d} \nm{ \Pi^n \left( G^* + 2 D^{-1} \Delta G^* \right) - G^* }_F^2 \ge 3 n\exp\left(- \frac{c_1 \tau^2 pn}{32\sigma^2}\right) + \log n \right) \le \frac{c_3}{n}
\end{align*}
for some constant $c_3 >0$. This proves the first bound.

For the second bound, we note that if $p' \le \tfrac{1}{n^2}$, then by using~\eqref{ineq:37} and the union bound, we have
\begin{align*}
\Pr\left( \frac{ 1 }{4d} \nm{ \Pi^n \left( G^* + 2 D^{-1} \Delta G^* \right) - G^* }_F^2 = 0  \,\Bigg|\,  \min_{i \in [n]} r_i \ge c_1 pn \right) \ge 1 - \frac{1}{n}.
\end{align*}
This, together with~\eqref{ineq:36}, implies that
\[ \Pr\left( \frac{ 1 }{4d} \nm{ \Pi^n \left( G^* + 2 D^{-1} \Delta G^* \right) - G^* }_F^2 = 0  \right) \ge 1 - \frac{c_2 + 1}{n}. \]
If $p' \ge \frac{1}{n^2}$, then $3 p' n^2 \log n \ge 2 p'n + \log n$. In this case, similar to the first bound, we get
\begin{align*}
\Pr\left( \frac{ 1 }{4d} \nm{ \Pi^n \left( G^* + 2 D^{-1} \Delta G^* \right) - G^* }_F^2 \ge 3 n^2 \log n \exp\left(- \frac{c_1 \tau^2 pn}{32\sigma^2}\right)  \right) \le \frac{c_3}{n} .
\end{align*}
Combining the last two inequalities yields the second bound.
\end{proof}

Under a slightly stronger assumption on the noise, we can even show that Algorithm~\ref{alg:GPM} will converge to the ground truth with high probability.
\begin{prop}
Consider the setting of Proposition~\ref{prop:est_err_discrete}. Then, there exist constants $c_0 , c_1, c_2>0$ such that whenever $\sigma \le \frac{c_0 \tau \sqrt{pn}}{d \sqrt{\log n}}$ and $p\ge \frac{c_1\log n}{n}$, Algorithm~\ref{alg:GPM} will converge to the ground truth (\ie, $G^\infty = G^* Q$ for some $Q\in \G$) with probability at least $1-\frac{c_2}{n}$.
\end{prop}

\begin{proof}
It suffices to prove that $\nm{ \Pi^n \left( G^* + 2 D^{-1} \Delta G^* \right) - G^* }_F^2 = 0$ with high probability. Using~\eqref{ineq:37} and the union bound, there exist some constants $c_0 > 0$ such that if $\sigma^2 \le \frac{c_0 \tau^2 pn}{16 d^2}$, then
\begin{equation*}
\Pr\left(\nm{ \Pi \left( G^* + 2 D^{-1} \Delta G^* \right) - G^* }_F^2 > 0\,\Bigg|\,  \min_{i \in [n]} r_i \ge c_0 pn \right) \le n \exp\left(- \frac{c_0 \tau^2 pn}{32\sigma^2}\right).
\end{equation*}
Combining this inequality with \eqref{ineq:36}, there exists a constant $c_1 > 0$ such that whenever $\sigma^2 \le \frac{c_1 \tau^2 pn}{ d^2 \log n}$, we have
\begin{equation*}
\Pr\left(\nm{ \Pi \left( G^* + 2 D^{-1} \Delta G^* \right) - G^* }_F^2 > 0 \right) \le \frac{c_2}{n}
\end{equation*}
for some constant $c_2 >0$. This completes the proof.
\end{proof}

We then present a bound that is weaker than the one in Proposition~\ref{prop:est_err_discrete} but applies to both continuous and discrete subgroups of the orthogonal group.
\begin{prop}\label{prop:est_err_continuous}
Suppose that the measurement graph is an Erd{\H{o}}s-R{\'e}nyi random graph with observation rate $p \in (0,1]$. Let $\{\Theta_{ij} : 1 \le i < j \le n\}$ be independent noise matrices that are independent of the measurement graph and whose entries are i.i.d. sub-Gaussian random variables with parameter $\sigma>0$.
Then, there exist constants $c_0,c_1, c_2>0$ such that whenever $p \ge \frac{c_0(\log n)^2}{n}$, we will have
\begin{align*}
& \nm{ \Pi^n \left( G^* + 2 D^{-1} \Delta G^* \right) - G^* }_F^2 
\le \frac{c_1 d^2 \sigma^2 }{p},
\end{align*}
with probability at least $1 - \frac{c_2}{n}$. If the sub-Gaussian entries $\{\Theta_{ij}: 1\le i < j\le n\}$ are zero-mean Gaussian with standard deviation $\sigma >0$, then the same inequality holds under the weaker requirement $p \ge \tfrac{c_0 \log n}{n}$.
\end{prop}

In the recent paper~\cite{gao2021optimal}, it has been shown that for $\mathcal{O}(d)$-\sync and $\mathcal{SO}(d)$-\sync under the setting of Erd{\H{o}}s-R{\'e}nyi measurement graph and additive Gaussian noise, if the standard deviation $\sigma = \sigma_n$ and the observation rate $p = p_n$ satisfy $\frac{pn}{\sigma^2} \to \infty$ and $\frac{pn}{\log n} \to \infty$ as $n\to\infty$, then the estimation error of any estimator $G$ is lower bounded by
\[ \varepsilon^2 (G) \ge  (1 - o(1)) \frac{d(d-1) \sigma^2}{2p}. \]
In addition, it has been shown that an iterative polar decomposition algorithm achieves the estimation error
\[ \varepsilon^2 (G) \le (1 + o(1)) \frac{d(d-1) \sigma^2}{2p}, \]
which matches with the lower bound asymptotically. Under the same measurement graph and noise setting, Theorems~\ref{thm:master}, \ref{thm:group}, \ref{thm:kappa}, \ref{thm:Delta_cond_random_graph}, \ref{thm:init} and Proposition~\ref{prop:est_err_continuous} imply that for any subgroup of $\mathcal{O}(d)$, the estimator $G^\infty$ output by Algorithm~\ref{alg:GPM} will satisfy the estimation error bound
\[ \varepsilon^2 (G^\infty) \le \frac{c_1 d^2 \sigma^2}{p} \]
with probability converging to 1, where $c_1 > 0$ is some constant. This once again shows the near-optimality of our approach.

\begin{proof}[Proof of Proposition~\ref{prop:est_err_continuous}]
Using inequality~\eqref{ineq:contraction}, it suffices to bound $\nm{ D^{-1}\Delta G^* }_F^2$. Since each of the $d$ columns of $G^*$ has length $\sqrt{n}$, we have 
\begin{align*}
& \nm{ D^{-1}\Delta G^* }_F^2 = \sum_{i = 1}^d \nm{ (D^{-1}\Delta G^*)_{:,i} }_F^2 \le \frac{\nm{\Delta}^2 n}{\min_{i\in[n]} r_i^2} .
\end{align*}
The desired inequality then follows from inequality~\eqref{ineq:36} and Proposition~\ref{prop:Delta_norm}.
\end{proof}

\subsection{Summary of Results}
Now, let us summarize our technical developments so far. By combining the results in Theorems~\ref{thm:group},~\ref{thm:kappa},~\ref{thm:Delta_cond_random_graph}, and~\ref{thm:init}, we see that under the setting of Theorem~\ref{thm:init} and with $\G$ being either the orthogonal group $\mathcal{O}(d)$, the special orthogonal group $\mathcal{SO}(d)$, the permutation group $\mathcal{P}(d)$, or the cyclic group $\mathcal{Z}_m$, all four conditions~\eqref{master_condi_i}--\eqref{master_condi_iv} in the master theorem (Theorem~\ref{thm:master}) will be satisfied with high probability. Consequently, the estimator output by GPM will satisfy the estimation error bound in Proposition~\ref{prop:est_err_discrete} for discrete subgroups and that in Proposition~\ref{prop:est_err_continuous} for continuous subgroups of the orthogonal group. To the best of our knowledge, this is the first time an estimation performance guarantee of such generality is obtained for GPM. 

As is evident from Theorems~\ref{thm:group},~\ref{thm:kappa},~\ref{thm:Delta_cond_random_graph}, and~\ref{thm:init}, the aforementioned guarantee requires the observation rate $p$ to be at least on the order of $\max\left\{ \frac{\alpha^2d}{\beta^2n}, \frac{\log n}{n} \right\}$ and the noise level $\sigma$ to be at most on the order of $\frac{\beta\sqrt{pn}}{\alpha d}$, where $\alpha \ge 1$ and $\beta \in (0,1]$ are related to the error-bound geometry of the subgroup $\G$ (see Conditions~\ref{cond:alpha} and~\ref{cond:beta}). 
When $\G = \mathcal{O}(d)$ and $p=1$, our bound on $\sigma$ has a more favorable order than that in~\cite[Theorem 3.2]{ling2020improved}. Nevertheless, we should point out that our result pertains to the \emph{estimation} performance of GPM, while that in~\cite{ling2020improved} pertains to the \emph{optimization} performance (\ie, convergence behavior) of GPM. In particular, as detailed in the discussion following Theorem~\ref{thm:master}, our master theorem guarantees that the estimation error of the iterates generated by the suitably initialized GPM decreases at least geometrically to some threshold, while~\cite[Theorem 3.2]{ling2020improved} establishes the linear convergence of the iterates, albeit only for the setting of complete measurement graph and additive Gaussian noise.


To put our results in context, in Table~\ref{table:sync_results}, we compare the estimation error bounds achieved by our approach with the best estimation error bounds achieved by non-convex approaches in the literature, under the setting of Erd{\H{o}}s-R{\'e}nyi measurement graph and additive Gaussian noise. We should point out that our results are more general than the other ones listed in the table, in the sense that the former also apply to general subgroups satisfying Conditions~\ref{cond:alpha} and~\ref{cond:beta} and to the setting of additive \emph{sub-Gaussian} noise.
We should point out that the estimation error bounds from \cite{gao2022iterative} and \cite{gao2021optimal} listed in Table~\ref{table:sync_results} are under the asymptotics $n\to \infty$, where as ours are non-asymptotic that hold for finite $n$. Nevertheless, in~\cite{gao2021optimal}, non-asymptotic estimation error bounds with a more explicit expression for the term $o(1)$ are also derived for $\mathcal{O}(d)$ and $\mathcal{SO}(d)$, which are omitted here.

\renewcommand{\arraystretch}{1.5}
\begin{table}[h]
\centering
\makebox[\textwidth][c]{
\begin{tabular}{|c|c|c|c|c|}
\hline
       $\G$       & Reference & $p  $ & $\sigma^2 $ & $\varepsilon^2(G)$ \\ \hline\hline
\multirow{2}{*}{$\mathcal{O}(1)$} & \cite{gao2022iterative}  & $1$ & $o(n)$  & 
$n\exp\left( -\tfrac{n(1+o(1))}{2\sigma^2} \right)$  \\ \cline{2-5}
& \textbf{Ours}  & $\Omega(\frac{\log n}{n})$ & $ O(p n)$  & 
$n \exp\left( -\frac{c  pn (1 - o(\log n))}{\sigma^2}  \right)$ \\ \hline
\multirow{2}{*}{$\mathcal{Z}_m$} & \cite{gao2022iterative}  & $1$ & $ o(\frac{n}{m^7})$  & 
$n\exp\left( -\tfrac{n(1+o(1))}{8\sigma^2} \right)$  \\ \cline{2-5}
& \textbf{Ours}  & $\Omega(\max\{\frac{d m^6}{n} , \frac{\log n}{n}\})$ & $O(\frac{ p n}{m^6} )$  & 
$n \exp\left( -\frac{c \tau^2 pn (1 - o(\log n))}{\sigma^2}  \right)$  \\ \hline
\multirow{2}{*}{$\mathcal{P}(d)$} & \cite{gao2022iterative}  & $1$ & $ o(\frac{n}{d^2})$  & 
$n\exp\left( -\tfrac{n(1+o(1))}{2\sigma^2} \right)$  \\ \cline{2-5}
& \textbf{Ours}  & $\Omega(\max\{\frac{d^3}{n}, \frac{\log n}{n}\})$ & $O(\frac{pn}{d^4})$  & 
$n \exp\left( -\frac{c  pn (1 - o(\log (nd)))}{\sigma^2}  \right)$ \\ \hline
\multirow{2}{*}{$\mathcal{O}(d)$, $\mathcal{SO}(d)$} & \cite{gao2021optimal} ($d = \Theta(1)$) & $\omega (\frac{\log n}{n }) $ & $ o( pn ) $  & 
$(1+o(1))\frac{d(d-1)\sigma^2}{2p}$  \\ \cline{2-5}
& \textbf{Ours}  & $\Omega(\max\{\frac{d}{n}, \frac{\log n}{n} \})$ & $O(\frac{pn}{d^2})$  & 
$O(\frac{d^2\sigma^2}{p})$  \\ \hline
\end{tabular}}
\caption{Comparison of the estimation error bounds $\varepsilon^2(G)$ achieved by various non-convex approaches under the setting of Erd{\H{o}}s-R{\'e}nyi measurement graph with observation rate at least $p$ and additive Gaussian noise with standard deviation at most $\sigma$. Bounds in \cite{gao2022iterative} and \cite{gao2021optimal} are asymptotic (as $n\to\infty$), whereas ours are non-asymptotic.}
\label{table:sync_results}
\end{table}

Let us briefly explain how we obtain the bounds in the $p$- and $\sigma^2$-columns of Table~\ref{table:sync_results}. For the continuous subgroups $\mathcal{O}(d)$ and $\mathcal{SO}(d)$, by Theorem~\ref{thm:group}, the parameters $\alpha $ and $\beta $ are both constants. From Theorem~\ref{thm:master}, Section~\ref{sec:verify}, and Proposition~\ref{prop:est_err_continuous}, our non-convex approach requires that 
\[p = \Omega \left( \max \left\{ \frac{\alpha^2 d}{\beta^2 n}, \frac{\log n}{n} \right\} \right) = \Omega \left( \max \left\{ \frac{d}{n}, \frac{\log n}{n} \right\} \right) \quad \text{and}\quad \sigma^2 = O\left( \frac{\beta^2 pn}{\alpha^2 d^2} \right) = O\left( \frac{pn}{d^2} \right).\]
For discrete subgroups, by Proposition~\ref{prop:cond_2_discrete}, we can take $\beta = \frac{\tau^2}{2d}$. 
Therefore, from Theorem~\ref{thm:master}, Section~\ref{sec:verify}, and Proposition~\ref{prop:est_err_discrete}, our non-convex approach requires that 
\[p = \Omega \left( \max \left\{ \frac{\alpha^2 d}{\beta^2 n}, \frac{\log n}{n} \right\}\right) \]
and
\[ \sigma^2 = O\left( \min \left\{ \frac{\beta^2 pn}{\alpha^2 d^2}, \frac{\tau^2 pn}{d^2} \right\}\right) = O\left( \min \left\{ \frac{\beta^2 pn}{\alpha^2 d^2}, \frac{\beta pn}{d} \right\}\right) = O\left( \frac{\beta^2 pn}{\alpha^2 d^2}\right). \] 
For the Boolean group $\mathcal{O}(1)$, the parameters $\alpha$ and $\beta$ are both constants. For the cyclic group $\mathcal{Z}_m$, we have
\[ \frac{\alpha^2}{\beta^2} = O\left( \tfrac{1}{\sin^6 \tfrac{\pi}{m}} \right) = O\left( \frac{1}{m^6} \right) \]
for large $m$, while for the permutation group, we have
\[ \frac{\alpha^2}{\beta^2} = O(d^2) . \]

To compare the bounds, note that our estimation error bounds for the subgroups $\mathcal{O}(1)$, $\mathcal{Z}_m$ and $\mathcal{Z}(d)$ are slightly worse than those in~\cite{gao2022iterative}. However, we have an advantage in terms of the assumptions. Indeed, our estimation error bounds apply to synchronization problems with incomplete observations, \ie, $p<1$, but those in~\cite{gao2022iterative} do not. Moreover, when $p = 1$, $m = \Theta(1)$ and $d = \Theta (1)$, our requirements on the noise variance $\sigma^2$ for the subgroups $\mathcal{O}(1)$, $\mathcal{Z}_m$ and $\mathcal{P}(d)$ are all $O(n)$, whereas those in~\cite{gao2022iterative} are $o(n)$. For the subgroups $\mathcal{O}(d)$ and $\mathcal{SO}(d)$, the estimation error bound obtained in~\cite{gao2021optimal} is slightly worse than ours. In terms of the assumptions, we manage to explicitly quantify the dependence on the group dimension $d$, whereas \cite{gao2021optimal} focuses only on the case $d = \Theta(1)$.

\section{Numerical Results} \label{sec:exp}
We have conducted numerical experiments to compare the computational speed, scalability, and estimation performance of our proposed entropic spectral estimator and the GPM-based non-convex approach with those of existing methods. In our experiments, we have considered noise models that go beyond the additive one covered by our theoretical development so as to test the viability of our proposed approach.
All our codes are implemented using MATLAB and tested on a desktop with Intel Core i7-10700 CUP (2.90GHz$\times $8). As will be seen from the results, our approach demonstrates superior performance in many different experiment settings.

\subsection{Special Orthogonal Synchronization}
We first present numerical results on $\mathcal{SO}(d)$-\sync. 

\subsubsection{Setting}\label{sec:SO(d)-setting}
We focus on the case where $d = 3$, which is most relevant to real-world applications. The experiment setting, which is the same as that in~\cite{wang2013exact}, is as follows.
We take an Erd{\H{o}}s-R{\'e}nyi random graph with observation rate $p\in (0,1]$ as the measurement graph $([n],  E )$. We consider a multiplicative noise model with two layers of multiplicative noise. More precisely, the observations are given by
\begin{equation*}\label{eq:SO(3)_noise}
	C_{ij} = G_i^* {G_j^*}^\top \Theta_{ij}^{\text{out}} \Theta_{ij}^{\text{Lan}},\quad (i,j)\in E,
\end{equation*}
where $\Theta_{ij}^{\text{out}}$ is the so-called outlier noise defined by
\begin{equation*}
	\Theta_{ij}^{\text{out}} =
	\begin{cases}
		\hfill I_3, \hfill & \text{ with probability } q, \\
		\hfill Q_{ij}\sim \text{Uniform}(\mathcal{SO}(3)), \hfill & \text{ with probability } 1-q
	\end{cases}
\end{equation*}
with $q\in (0,1]$ being the non-corruption rate, $\text{Uniform}(\mathcal{SO}(3))$ being the uniform distribution on $\mathcal{SO}(3)$,
and $\Theta_{ij}^{\text{Lan}} \in \mathcal{SO}(3)$ being generated according to the Langevin distribution (also called the von Mises-Fisher distribution) \cite{chiuso2008wide,wang2013exact} on $\mathcal{SO}(3)$ with mean $I_3$ and concentration parameter $\gamma \ge 0$, \ie, the density function of each $\Theta_{ij}^{\text{Lan}}$ is given by 
\begin{equation*}
	c(\gamma) \exp\left( \gamma \Tr ( \Theta_{ij}^{\text{Lan}}) \right)
\end{equation*}
for some normalization constant $c(\gamma) > 0$. The parameter $\gamma \ge 0$ controls the concentration of the random matrix $\Theta_{ij}^{\text{Lan}}$ around the mean $I_3$ --- the larger the parameter $\gamma$, the more concentrated around the mean $I_3$ the random matrix $\Theta_{ij}^{\text{Lan}}$ is. In particular, it is the uniform distribution $\text{Uniform}(\mathcal{SO}(3))$ when $\gamma = 0$. As $\gamma \ra +\infty$, the distribution behaves like a Gaussian distribution with mean $I_3$ and variance $\tfrac{1}{\gamma}$.

\begin{figure}[t!]
	\begin{subfigure}{.49\textwidth}
		\centering
		\includegraphics[width=\linewidth]{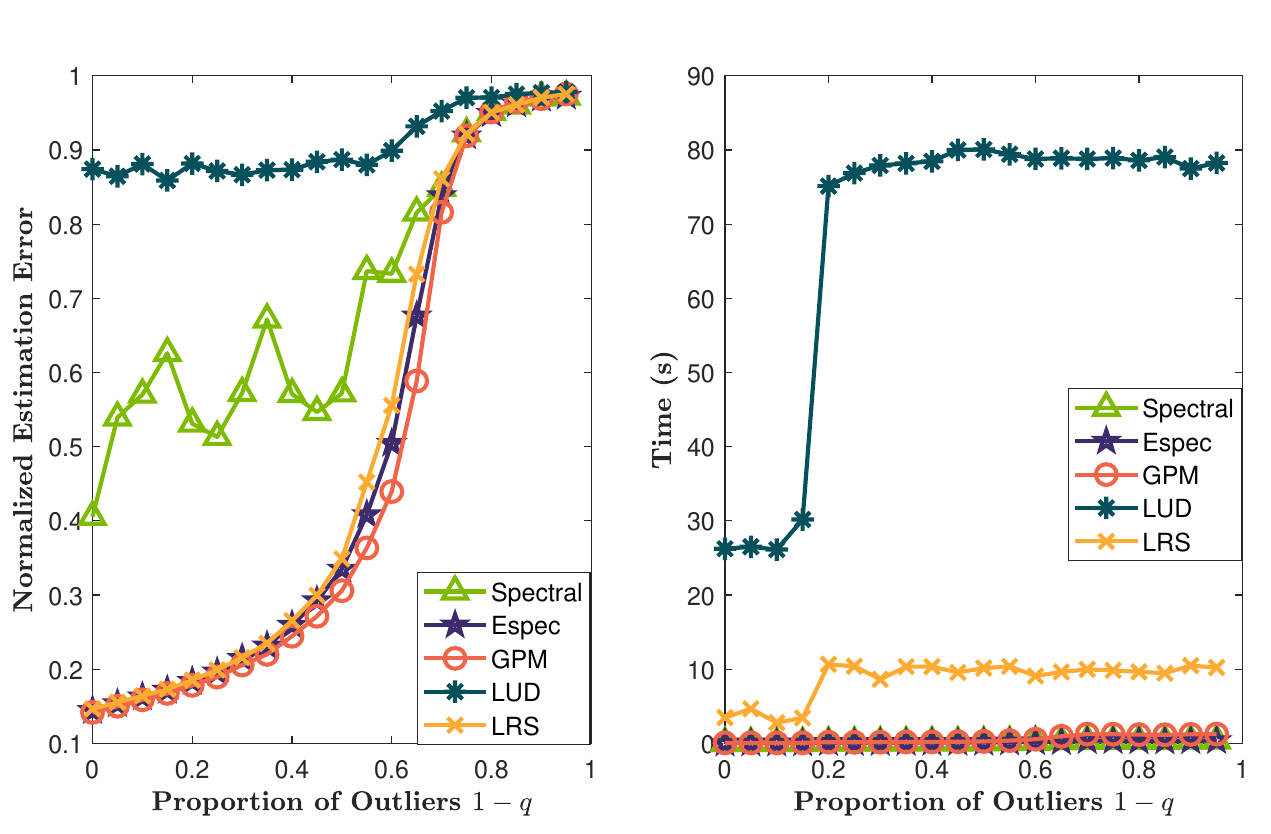}  
		\caption{$d = 3, n = 300, \gamma = 1, p = 0.2$}
	\end{subfigure}
	\hfill
	\begin{subfigure}{.49\textwidth}
		\centering
		\includegraphics[width=\linewidth]{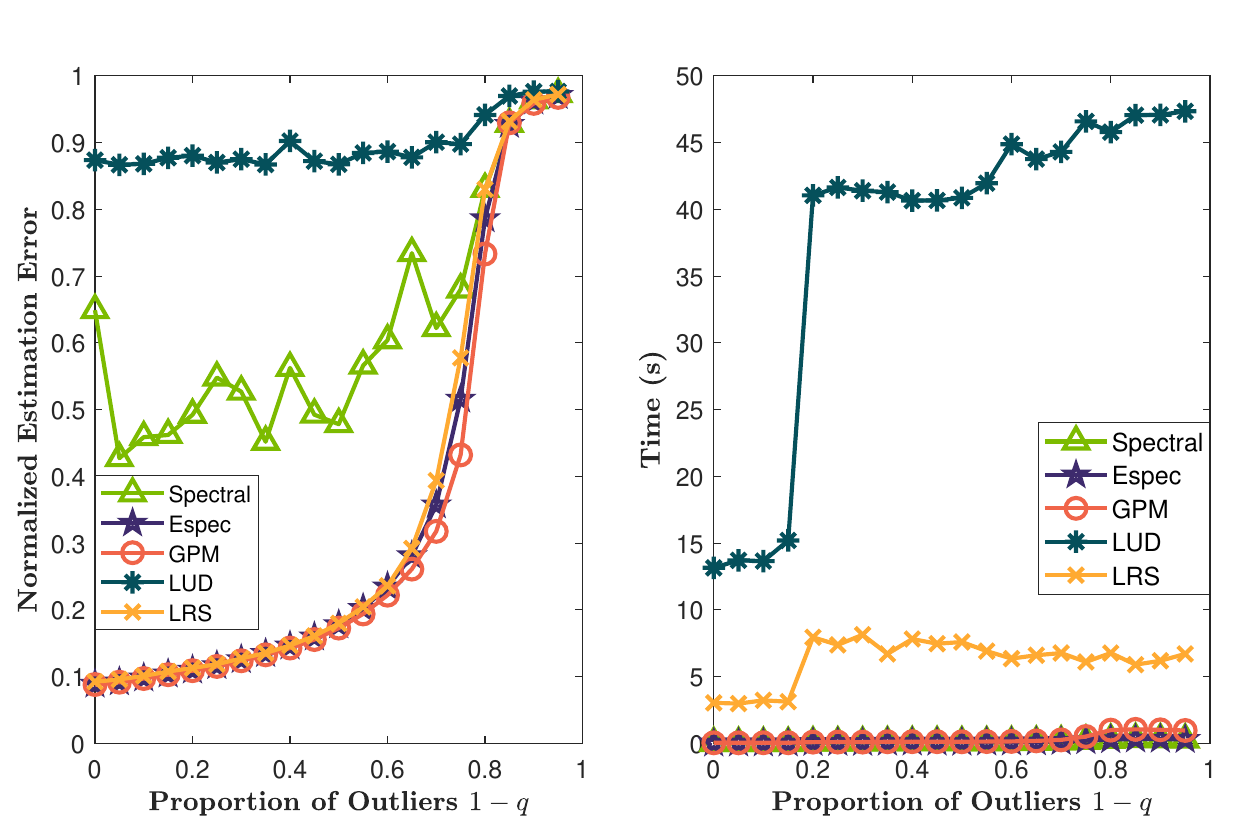}  
		\caption{$d = 3, n = 300, \gamma = 1, p = 0.5$}
	\end{subfigure}
	
	\begin{subfigure}{.49\textwidth}
		\centering
		\includegraphics[width=\linewidth]{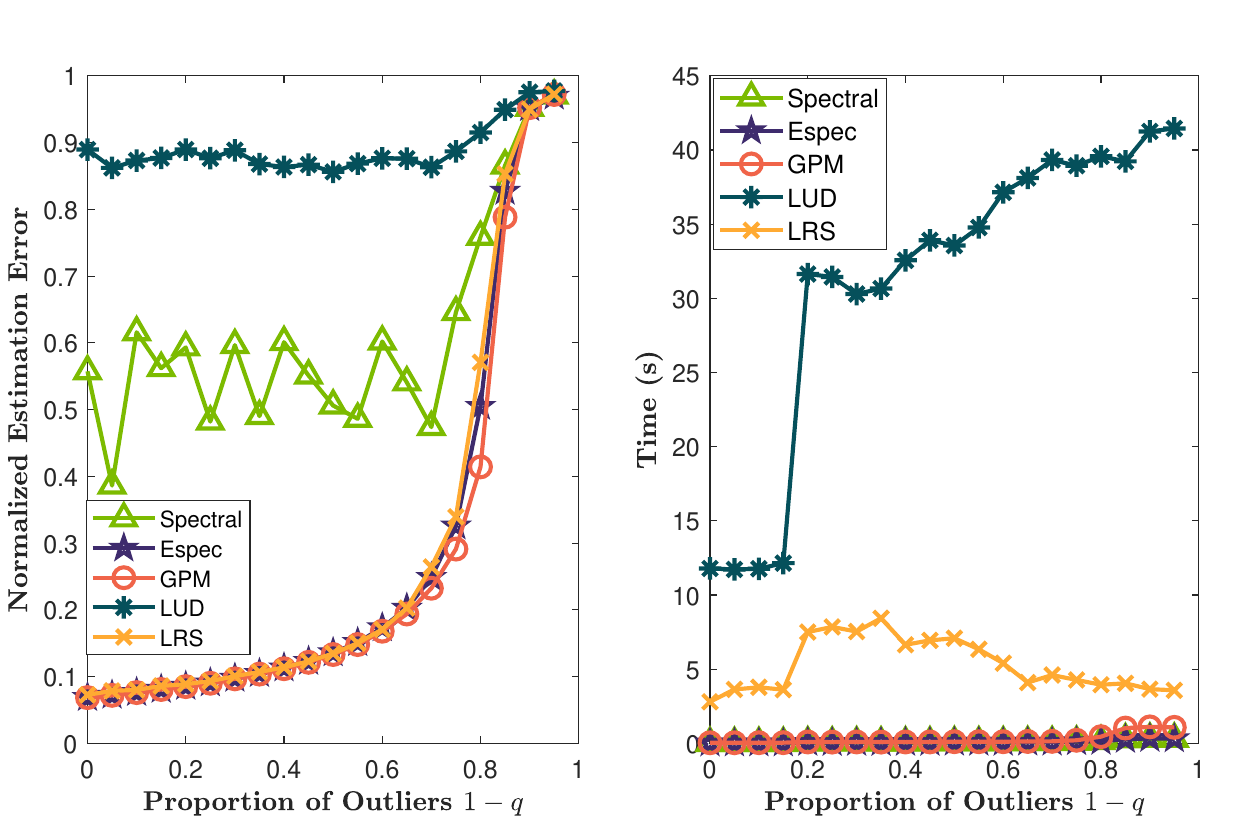}  
		\caption{$d = 3, n = 300, \gamma = 1, p = 0.8$}
	\end{subfigure}
	\hfill
	\begin{subfigure}{.49\textwidth}
		\centering
		\includegraphics[width=\linewidth]{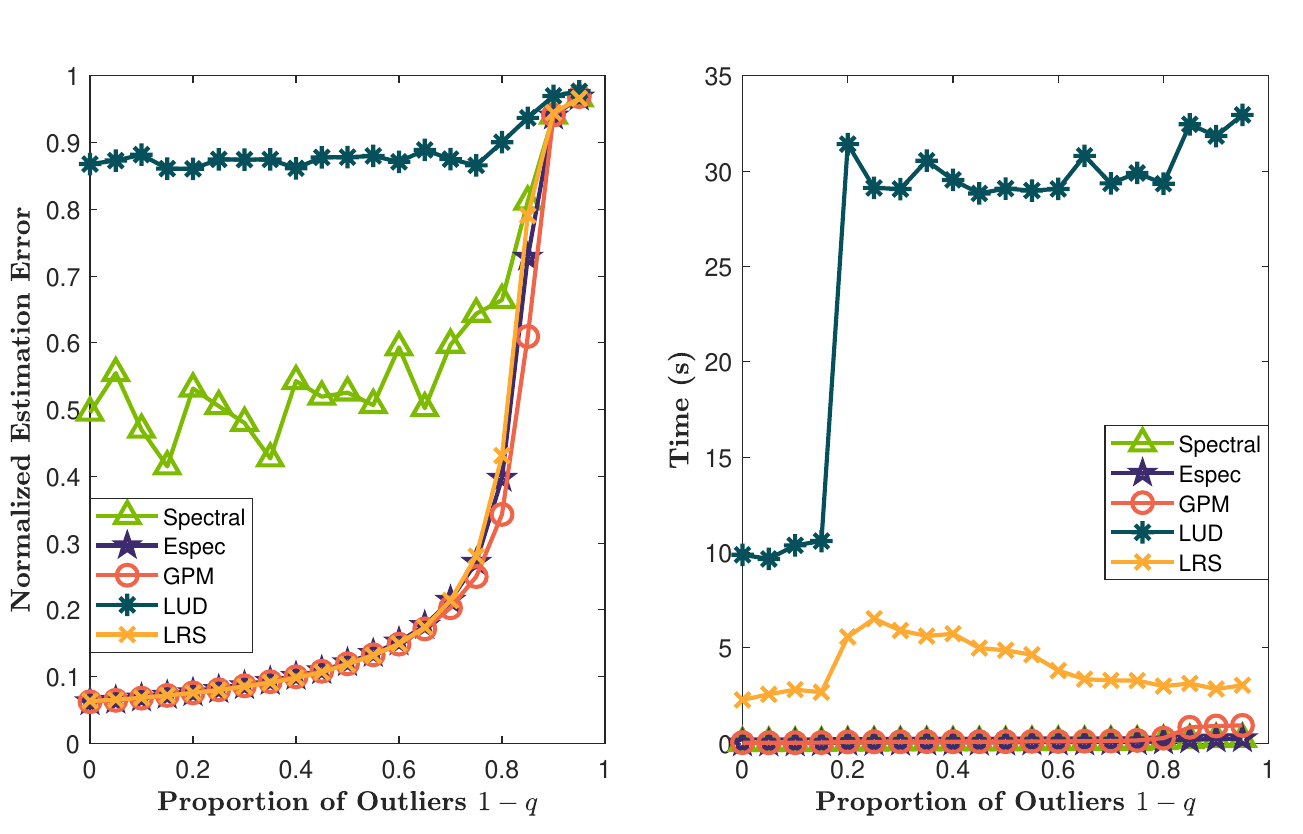}  
		\caption{$d = 3, n = 300, \gamma = 1, p = 1$}
	\end{subfigure}
	
	\caption{Estimation error and computational time of the standard spectral estimator (labeled as Spectral), the entropic spectral estimator (labeled as Espec), the non-convex approach initialized by the entropic spectral estimator (labeled as GPM), the least unsquared deviation approach (labeled as LUD), and the low-rank-sparse decomposition approach (labeled as LRS) under the setting $d=3$, $n=300$, $\gamma = 1$, and $p \in \{0.2, 0.5, 0.8, 1\}$.}
	\label{fig:SO(3)-low_noise}
\end{figure}

\begin{figure}[t!]
	\begin{subfigure}{.49\textwidth}
		\centering
		\includegraphics[width=\linewidth]{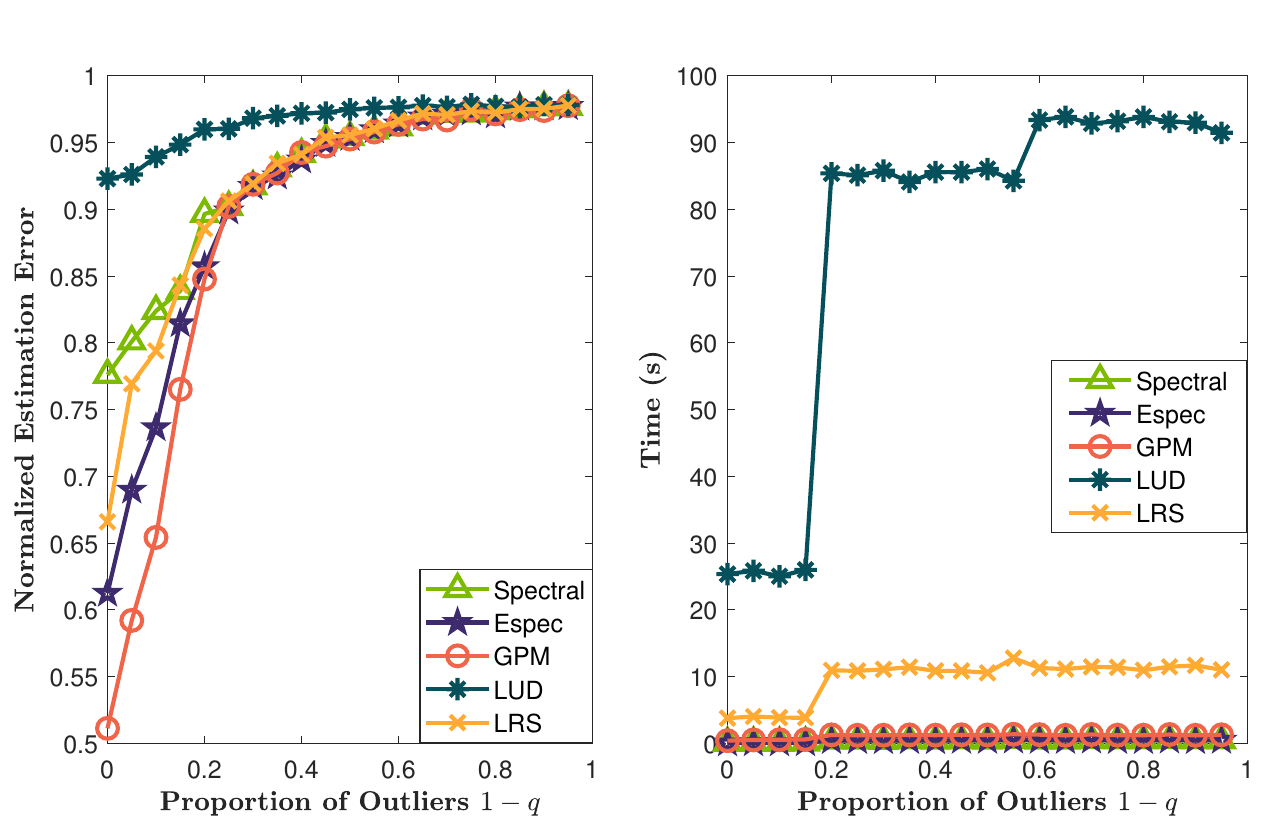}  
		\caption{$d = 3, n = 300, \gamma = 0.4, p = 0.2$}
	\end{subfigure}
	\hfill
	\begin{subfigure}{.49\textwidth}
		\centering
		\includegraphics[width=\linewidth]{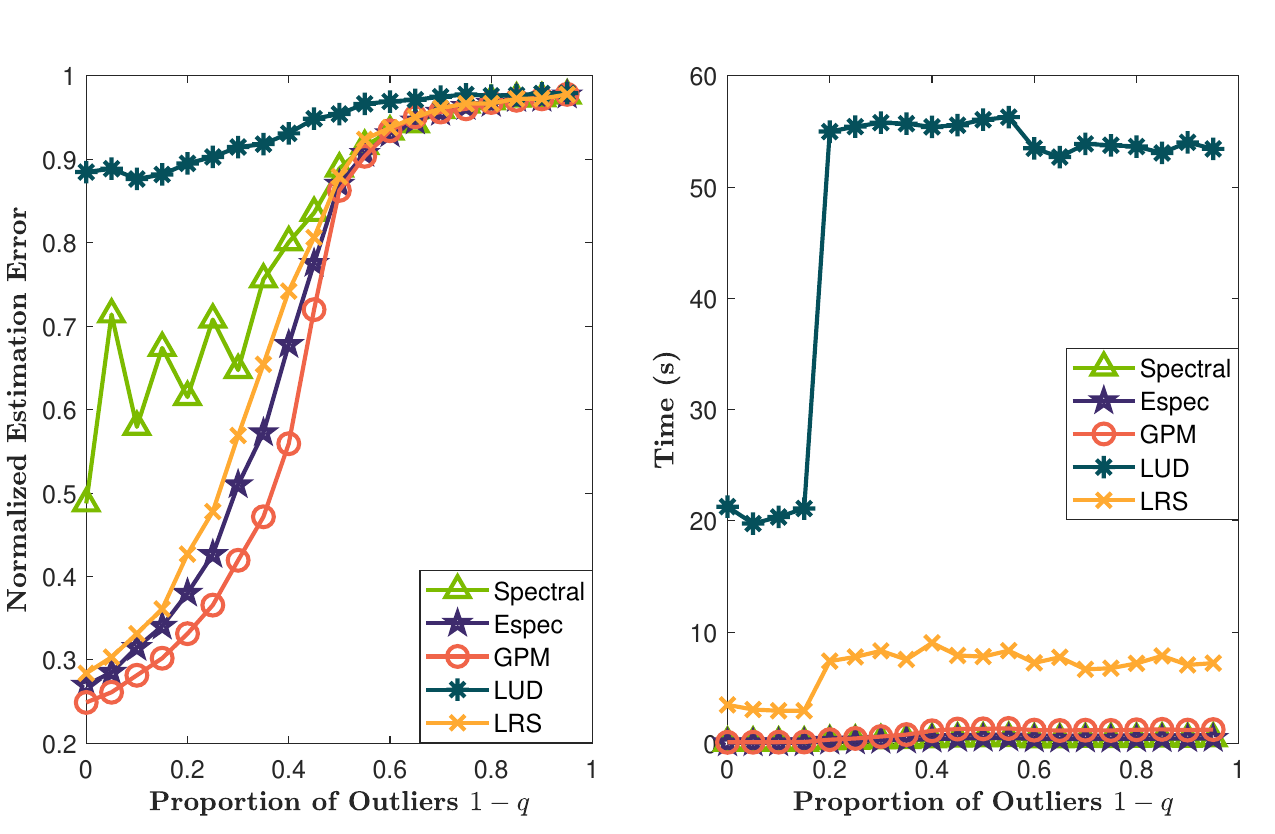}  
		\caption{$d = 3, n = 300, \gamma = 0.4, p = 0.5$}
	\end{subfigure}
	
	\begin{subfigure}{.49\textwidth}
		\centering
		\includegraphics[width=\linewidth]{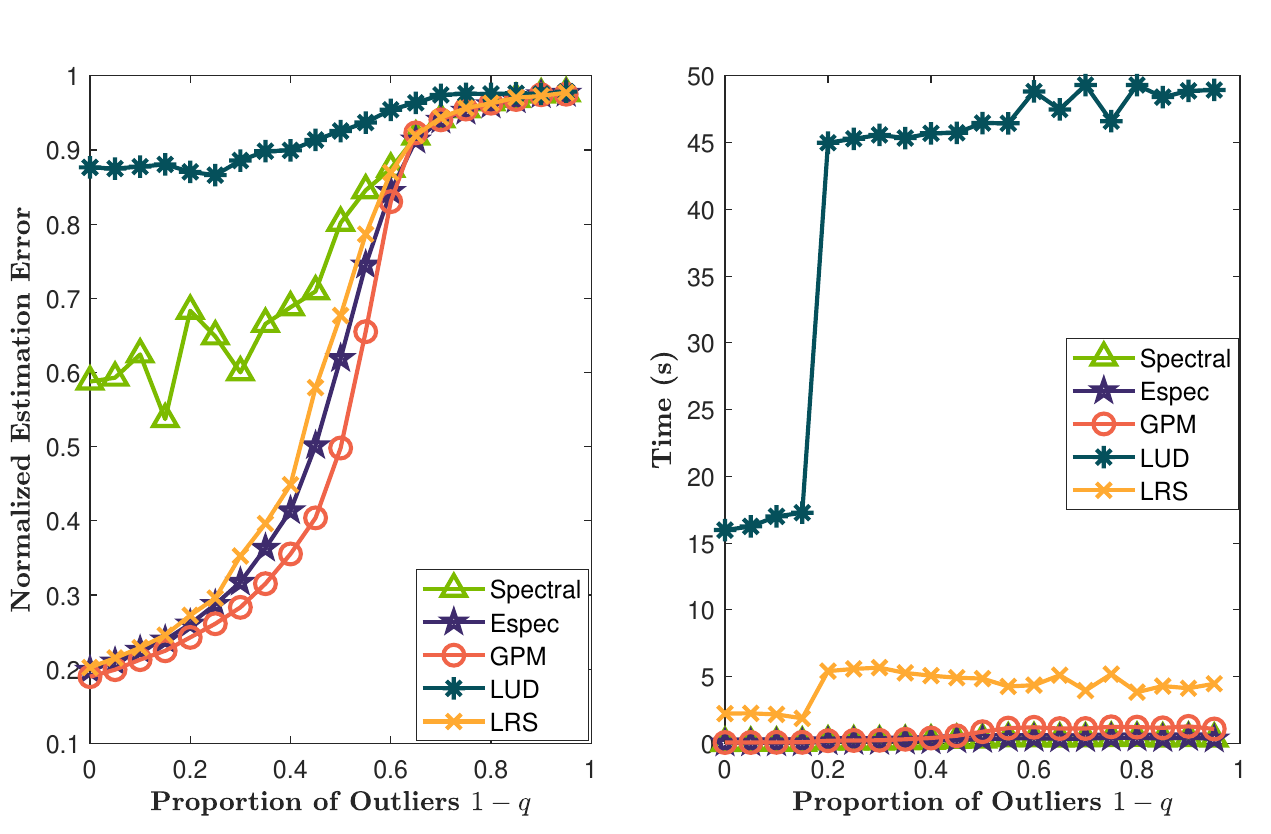}  
		\caption{$d = 3, n = 300, \gamma = 0.4, p = 0.8$}
	\end{subfigure}
	\hfill
	\begin{subfigure}{.49\textwidth}
		\centering
		\includegraphics[width=\linewidth]{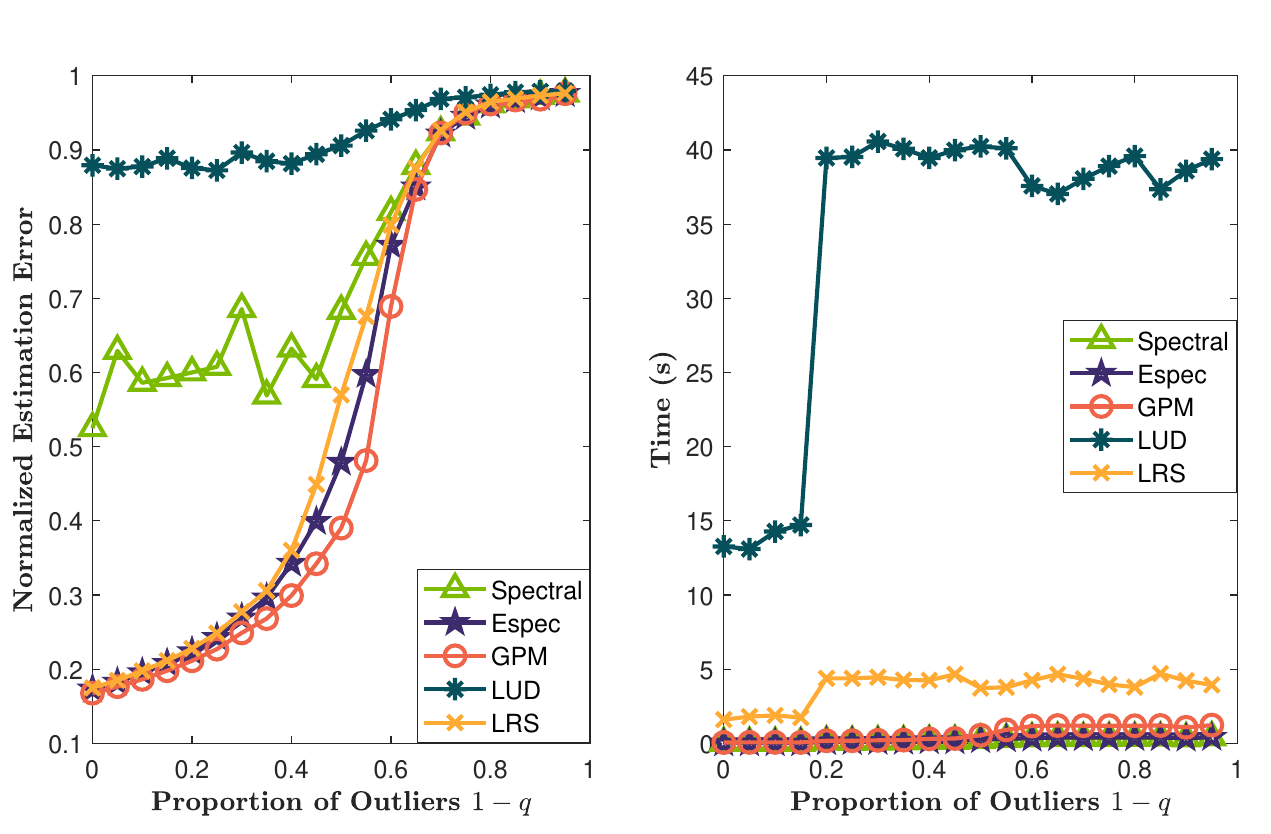}  
		\caption{$d = 3, n = 300, \gamma = 0.4, p = 1$}
	\end{subfigure}
	
	\caption{Estimation error and computational time of the standard spectral estimator (labeled as Spectral), the entropic spectral estimator (labeled as Espec), the non-convex approach initialized by the entropic spectral estimator (labeled as GPM), the least unsquared deviation approach (labeled as LUD), and the low-rank-sparse decomposition approach (labeled as LRS) under the setting $d=3$, $n=300$, $\gamma = 0.4$, and $p \in \{0.2, 0.5, 0.8, 1\}$.}
	\label{fig:SO(3)-high_noise}
\end{figure}

\subsubsection{Results}
We compare our proposed entropic spectral estimator for $\mathcal{SO}(d)$-\sync and the GPM-based non-convex approach with the least unsquared deviation approach in \cite{wang2013exact} and the low-rank-sparse decomposition approach in \cite{arrigoni2018robust}. We also include the standard spectral estimator in the comparison as baseline. 
The codes for the least unsquared deviation approach are provided by the authors of~\cite{wang2013exact}, while those for the low-rank-sparse decomposition approach are available online.\footnote{\url{https://fusiello.github.io/demo/gmf/index.html}} In our experiments, we use the default choice for all the parameters in their codes. Since the two competing methods are known to perform well against outlier noise, we mainly study the recovery performance and computational time by varying the proportion of outliers $1 - q$. For ease of comparison, we measure the recovery performance using the \emph{normalized estimation error} $\frac{\epsilon (G)}{\sqrt{2nd}}$, whose value always lies between 0 and 1. 
Figures~\ref{fig:SO(3)-low_noise} and~\ref{fig:SO(3)-high_noise} show the experiment results in the low noise ($\gamma = 1$) and high noise ($\gamma = 0.4$) regimes, respectively.
The reported time for our non-convex approach includes the time for computing the entropic spectral estimator, as the former is initialized by the latter.
All the points in the figures are obtained by averaging over 30 independent random instances.

From Figure~\ref{fig:SO(3)-low_noise}, we see that the estimation performance of the proposed entropic spectral estimator is significantly better than that of the standard spectral estimator. Moreover, when GPM is initialized by the entropic spectral estimator, it yields an estimator whose performance further improves upon that of the entropic spectral estimator.
We also note that in the low noise regime, the non-convex approach performs generally on par with the low-rank-sparse decomposition approach in terms of estimation error. This suggests that our approach is fairly robust to multiplicative and outlier noise, given that the low-rank-sparse decomposition approach was shown empirically to possess such a desirable property~\cite{arrigoni2018robust}. As for the computation time, both our entropic spectral estimator and the GPM-based non-convex approach are considerably faster than the low-rank-sparse decomposition approach. 
We also notice that the curves associated with the standard spectral estimator are less smooth and have large variance.
This could be explained as follows. Recall that the block matrix $V_C$ is formed by the eigenvectors of $C$ (see Section~\ref{subsec:ese}) and contains much useful information for our estimation problem. If a certain block of $V_C$ lies close to $\mathcal{SO}(d)$, then the standard spectral estimator directly projects it onto $\mathcal{SO}(d)$. In this case, the projection retains the useful information and hence serves as a good estimation of the corresponding block of the ground truth. But if this is not the case (\ie, the block is close to the opposite disconnected component associated with $-1$ determinant), then the direct projection onto $\mathcal{SO}(d)$ would be a bad estimation. By the symmetry in our random instances, there is a half chance that the block would lie close to $\mathcal{SO}(d)$. This introduces extra variance to the standard spectral estimator. We should also point out that for $\mathcal{SO}(d)$-\sync, the proposed entropic spectral estimator has a different projection mechanism for the blocks. Roughly speaking, it first projects the block to the closest disconnected component, irrespective of the determinant. Then, if the projection has determinant $-1$, it further ``flips'' it to the counterpart element on $\mathcal{SO}(d)$.
As another observation, quite surprisingly, the least unsquared deviation approach performs worse than the spectral estimator in terms of estimation error. The optimization problem associated with the least unsquared deviation approach is a nonlinear convex semidefinite program obtained via the relaxation technique and solved by the alternating direction augmented Lagrangian method~\cite{wen2010alternating}. We suspect that the unsatisfactory performance of the least unsquared deviation approach might be due to the fact that the alternating direction augmented Lagrangian method can only achieve low to medium accuracy and/or is sensitive to the choice of the penalty parameter in the augmented Lagrangian term.

The experiment results in the high noise regime are shown in Figure~\ref{fig:SO(3)-high_noise}. The behavior of the algorithms is mostly similar to that in the low noise case, except that the estimation performance of our GPM-based non-convex approach is fairly better than that of the low-rank-sparse decomposition approach.

\subsection{Permutation Synchronization}
Next, we present numerical results on $\mathcal{P}(d)$-\sync. 

\subsubsection{Setting}\label{sec:P(d)-setting}
Again, we take an Erd{\H{o}}s-R{\'e}nyi random graph with observation rate $p\in (0,1)$ as the measurement graph $([n], E)$. We consider an adversarial measurement model that contains both additive and multiplicative noise. Specifically, the observations are given by 
	\begin{equation*}
		C_{ij} = \Pi_{\mathcal{P}(d)}(G_i^* {G_j^*}^\top\Xi_{ij}^{\text{out}} + \sigma W_{ij}), \quad (i,j)\in E ,
	\end{equation*}
	where $\Xi_{ij}^{\text{out}}$ is the outlier noise defined by
	\begin{equation*}
		\Xi_{ij}^{\text{out}} =
		\begin{cases}
			\hfill I_d, \hfill & \text{ with probability } q, \\
			\hfill P_{ij}\sim \text{Uniform}(\mathcal{P}(d)), \hfill & \text{ with probability } 1-q
		\end{cases}
	\end{equation*}
	with $q\in (0,1]$ being the non-corruption rate, $\text{Uniform}(\mathcal{P}(d))$ being the uniform distribution on $\mathcal{P}(d)$,
	$\{ W_{ij} : (i,j) \in E \}$ being independent random matrices with i.i.d. standard Gaussian entries, and $\sigma \ge 0$ being a parameter controlling the magnitude of the additive noise. Due to the discrete nature of the signals (permutation matrices), instead of the estimation error $\varepsilon(\,\cdot\,)$, we quantify the estimation performance by the \emph{recovery rate}, which is defined as 
\begin{equation}\label{eq:recovery_rate}
	\text{Recovery Rate} = \frac{\text{number of correctly recovered group elements}}{n}.
\end{equation}

\subsubsection{Results}

\begin{figure}[t!]
	\begin{subfigure}{.49\textwidth}
		\centering
		\includegraphics[width=1\linewidth]{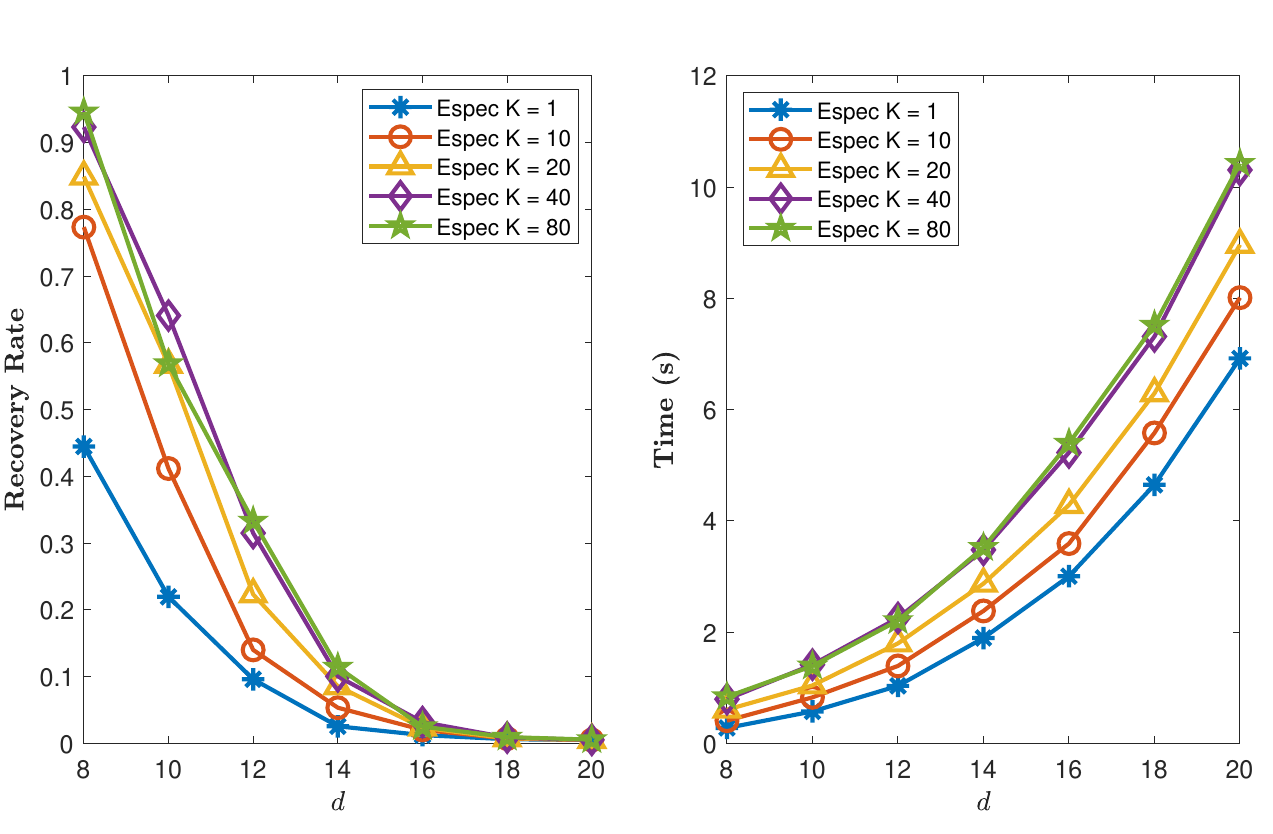}  
		\caption{Entropic Spectral Estimator}
	\end{subfigure}
	\hfill
	\begin{subfigure}{.49\textwidth}
		\centering
		\includegraphics[width=1.02\linewidth]{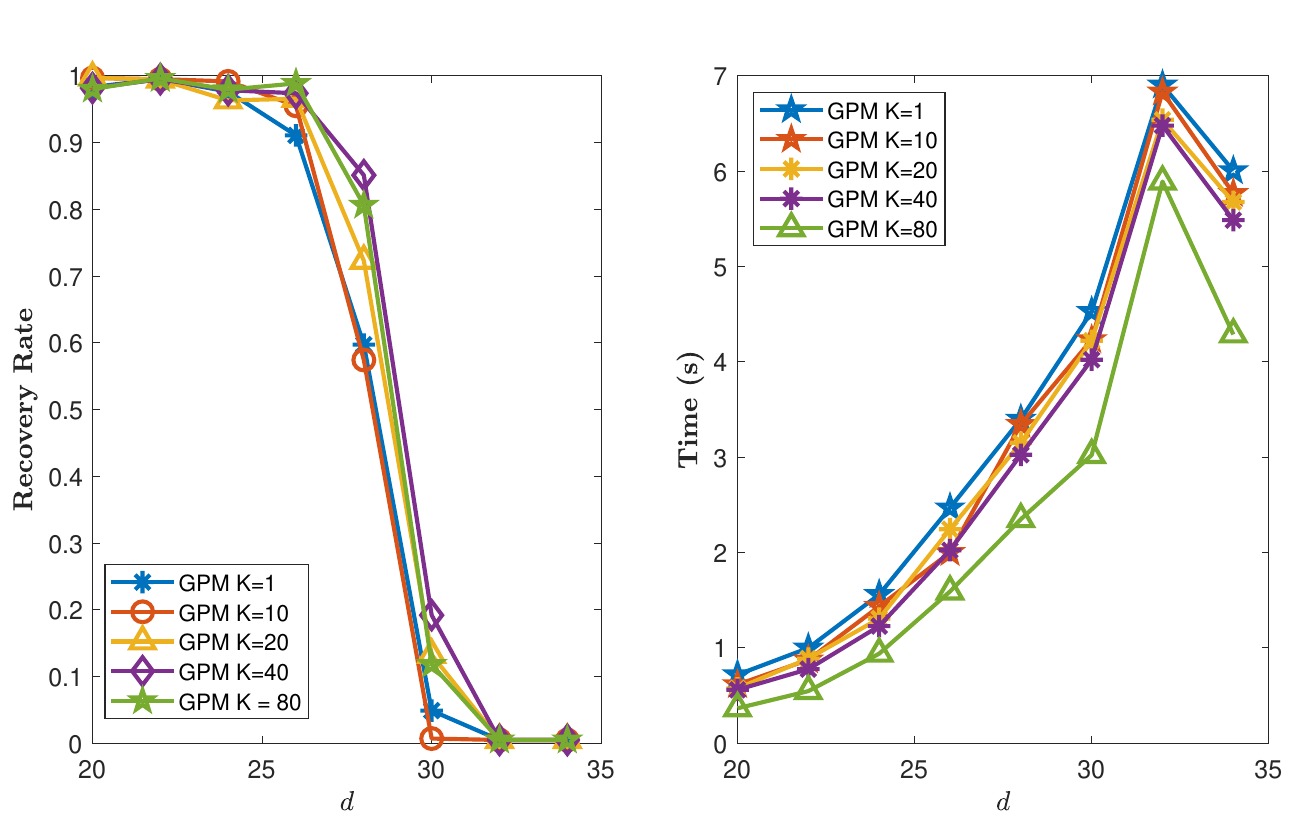}  
		\caption{GPM}
	\end{subfigure}
	\caption{Recovery rate and computational time of the proposed entropic spectral estimator (labeled as Espec) and the non-convex approach initialized by the entropic spectral estimator (labeled as GPM) under the setting  $n = 200$, $p = 0.5$, $q = 0.8$, $\sigma = 1$, and $K \in \{1, 10, 20, 40, 80\}$.}
	\label{fig:perm_exp_number}
\end{figure}

Recall that the entropic spectral estimator for $\mathcal{P}(d)$-\sync requires as input a finite subset $\mathcal{Q}\subseteq \mathcal{O}(d)/\mathcal{P}(d)$ (see Algorithm~\ref{alg:spec}). Following the approach discussed after Algorithm~\ref{alg:spec}, such a subset can be found with high probability by generating $K$ independent, uniformly distributed random orthogonal matrices, where $K$ is sufficiently large (see~\eqref{eq:K}). In our first experiment, we investigate the performance of the entropic spectral estimator and the GPM-based non-convex approach as $K$ varies.
%
Figure~\ref{fig:perm_exp_number} shows the results for $K \in \{1, 10, 20, 40, 80\}$. The algorithm we used to generate uniformly distributed random orthogonal matrices is based on the paper~\cite{diaconis1987subgroup}, see also~\cite{mezzadri2006generate} for more details. Note that when $K=1$, the entropic spectral estimator reduces to the standard spectral estimator. From the figure, we see that the recovery performance of both the entropic spectral estimator and the non-convex approach improves as $K$ increases, but the marginal benefit becomes smaller and smaller. Furthermore, we see that the computational time does not increase significantly as $K$ increases. The abrupt drop in the computational time of GPM after $d\approx 32$ is due the fact that GPM stops making progress and activates one of the stopping conditions in our implementation quite early. Therefore, for those values of $d$, the curves are not informative.

\begin{figure}[t!]
	\centering
	\begin{subfigure}{.49\textwidth}
		\centering
		\includegraphics[width=\linewidth]{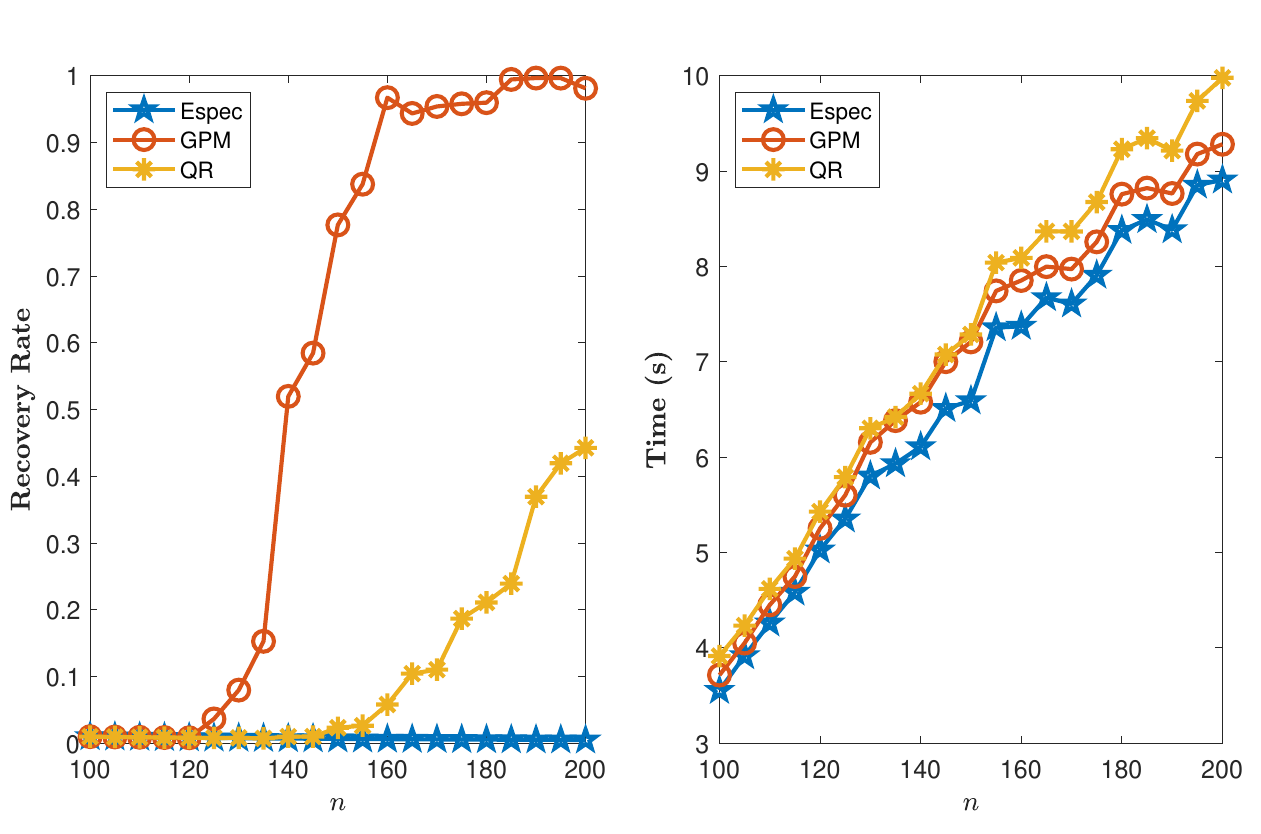}
	\caption{$d = 20, p = 0.5, q = 0.8, \sigma = 1$}
\label{fig:perm_exp_1_1} 
\end{subfigure}
	\hfill
	\begin{subfigure}{.49\textwidth}
		\centering
		\includegraphics[width=\linewidth]{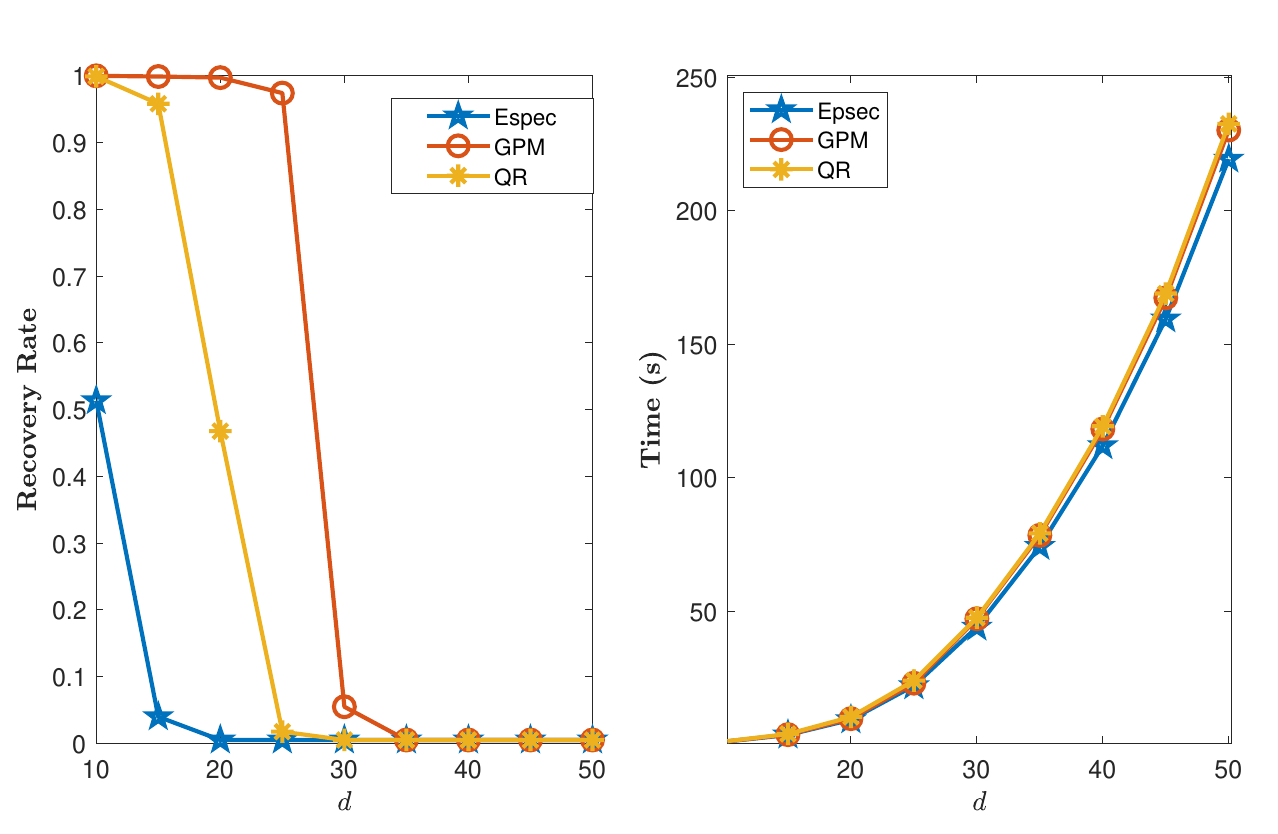} 
		\caption{$n = 200, p = 0.5, q = 0.8, \sigma = 1$} \label{fig:perm_exp_1_2}
	\end{subfigure}	
	\caption{Recovery rate and computational time of the entropic spectral estimator with $K=40$ (labeled as Espec), the non-convex approach initialized by the entropic spectral estimator (labeled as GPM), and the QR factorization-based approach initialized by the entropic spectral estimator (labeled as QR) as the parameters $n$ and $d$ vary.}
	\label{fig:perm_exp_1}
\end{figure}

In our second experiment, we compare the performance of our entropic spectral estimator (with $K = 40$) and GPM-based non-convex approach 
with that of the QR factorization-based iterative approach developed in \cite{shen2016normalized} as the parameters $n$ and $d$ vary. The results are summarized in Figure~\ref{fig:perm_exp_1}. All the points in the figure are obtained by averaging over 30 independent random instances. 
As we can see from Figure~\ref{fig:perm_exp_1}, the recovery performance of our GPM-based non-convex approach is significantly better than that of the entropic spectral estimator and the QR factorization-based approach.  
Furthermore, we see that our GPM-based non-convex approach is slightly faster than the QR factorization-based approach.
Of course, the computational time of the entropic spectral estimator is the shortest among the three tested methods, since both the GPM-based non-convex approach and the QR factorization-based approach are initialized by the entropic spectral estimator.

\subsection{Cyclic Synchronization (Joint Alignment Problem)}
Finally, we present our numerical results on $\mathcal{Z}_m$-\sync. We remark that $\mathcal{Z}_m$-\sync is equivalent to the joint alignment problem considered in~\cite{chen2018projected}.

\subsubsection{Setting}
We adopt the same experiment setting as in \cite{chen2018projected}. Specifically, we take an Erd{\H{o}}s-R{\'e}nyi random graph with observation rate $p\in (0,1)$ as the measurement graph $([n], E)$. We consider a multiplicative noise model, in which the observations are given by
\begin{equation*}
	C_{ij} = G_i^* {G_j^*}^\top \Theta_{ij}^{\text{out}}, \quad (i,j)\in E.
\end{equation*}
Here, 
\begin{equation*}
	\Theta_{ij}^{\text{out}} =
	\begin{cases}
		\hfill I_2,  \hfill & \text{ with probability } q, \\
		\hfill Q_k, \hfill & \text{ with probability } 1-q
	\end{cases}
\end{equation*}
with $q \in (0,1]$ being the non-corruption rate, $Q_k$ being defined in \eqref{eq:6}, and $k \sim \text{Uniform}([m])$ being uniformly distributed on $[m]$. To quantify the estimation performance, we again use the recovery rate\footnote{In~\cite{chen2018projected}, the \emph{misclassification rate} is used to quantify the estimation performance, which is equal to $1 - \mbox{Recovery Rate}$.} defined in~\eqref{eq:recovery_rate}.

\subsubsection{Results}
\begin{figure}[t!]
	\begin{subfigure}{.49\textwidth}
		\centering
		\includegraphics[width=1.02\linewidth]{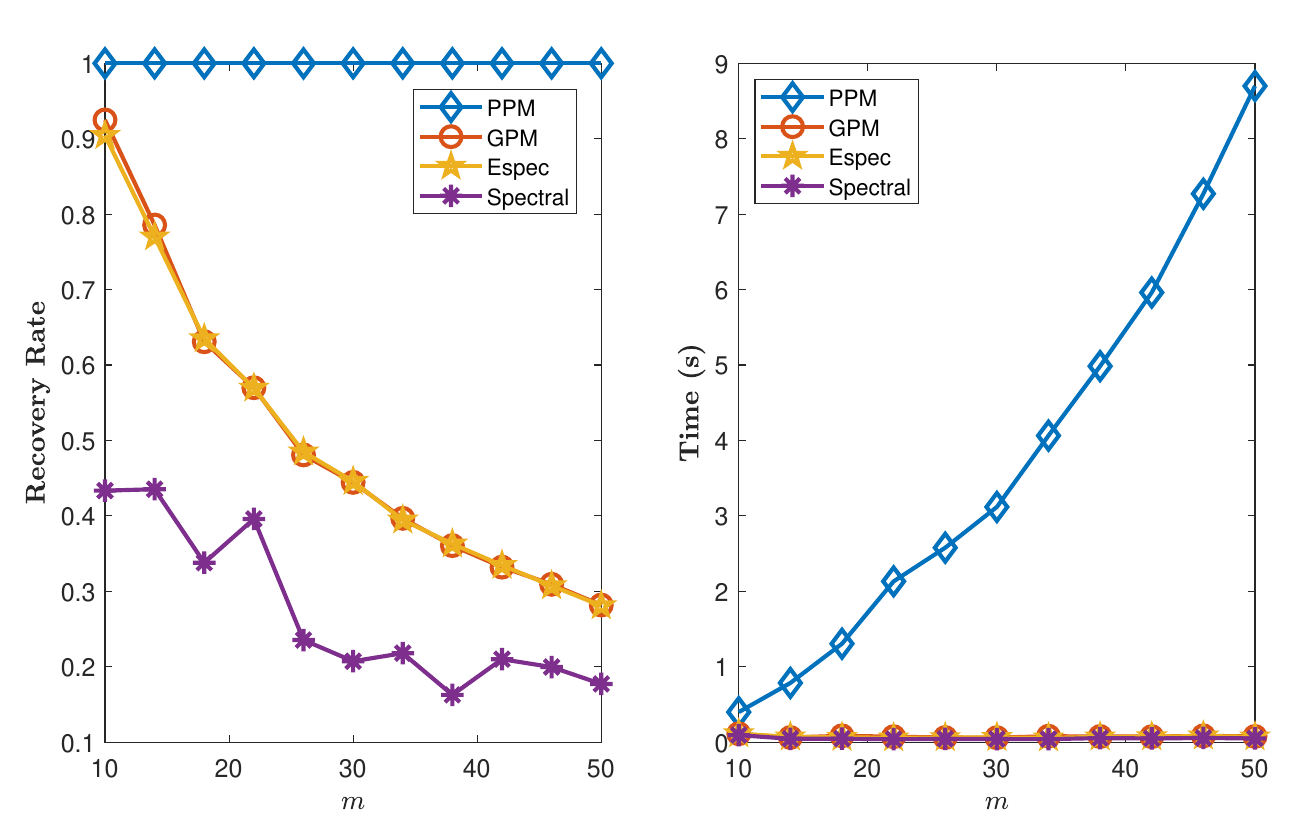}  
		\caption{$n = 500, p = 0.3, q = 0.3$}
		\label{fig:cyclic_sync_55}
	\end{subfigure}
	\hfill
	\begin{subfigure}{.49\textwidth}
		\centering
		\includegraphics[width=1.0\linewidth]{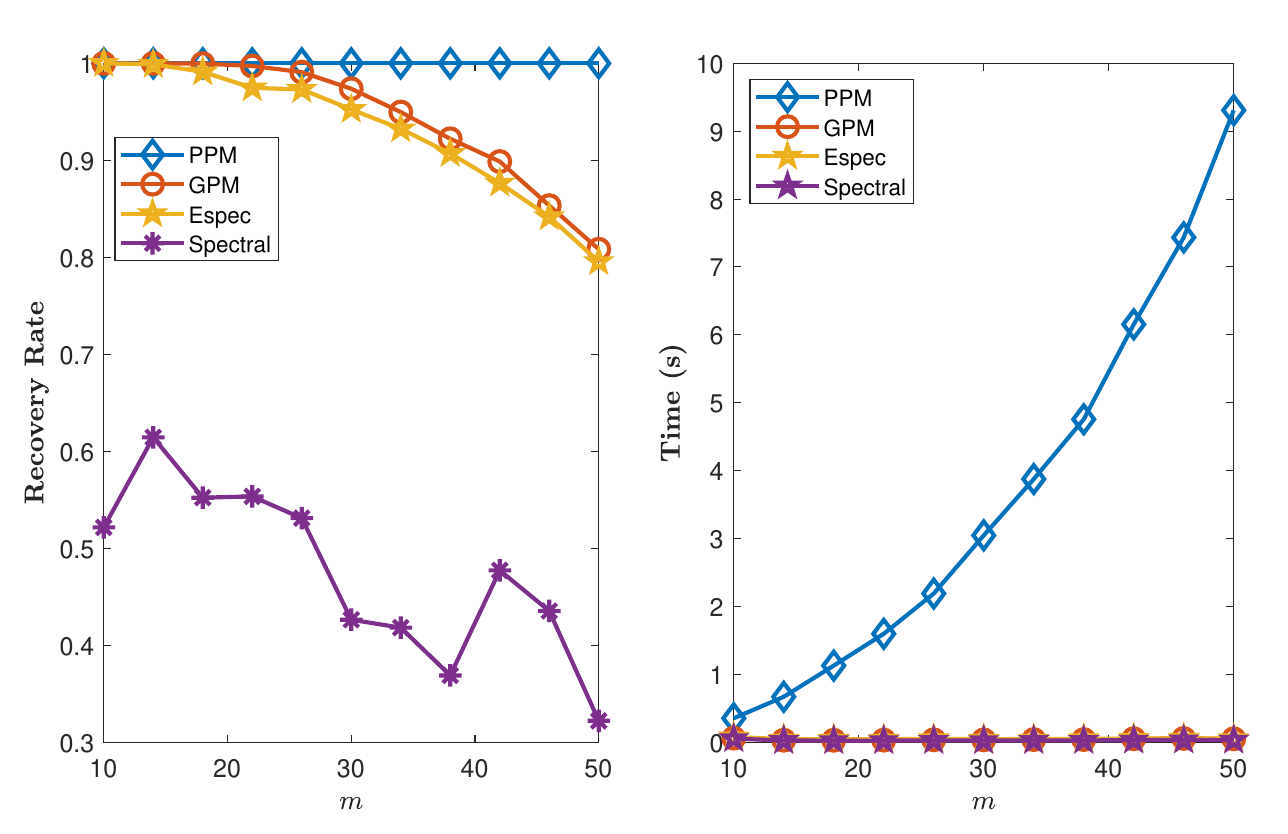}  
		\caption{$n = 500, p = 0.3, q = 0.7$}
		\label{fig:cyclic_sync_58}
	\end{subfigure}
	
	\begin{subfigure}{.49\textwidth}
		\centering
		\includegraphics[width=1.0\linewidth]{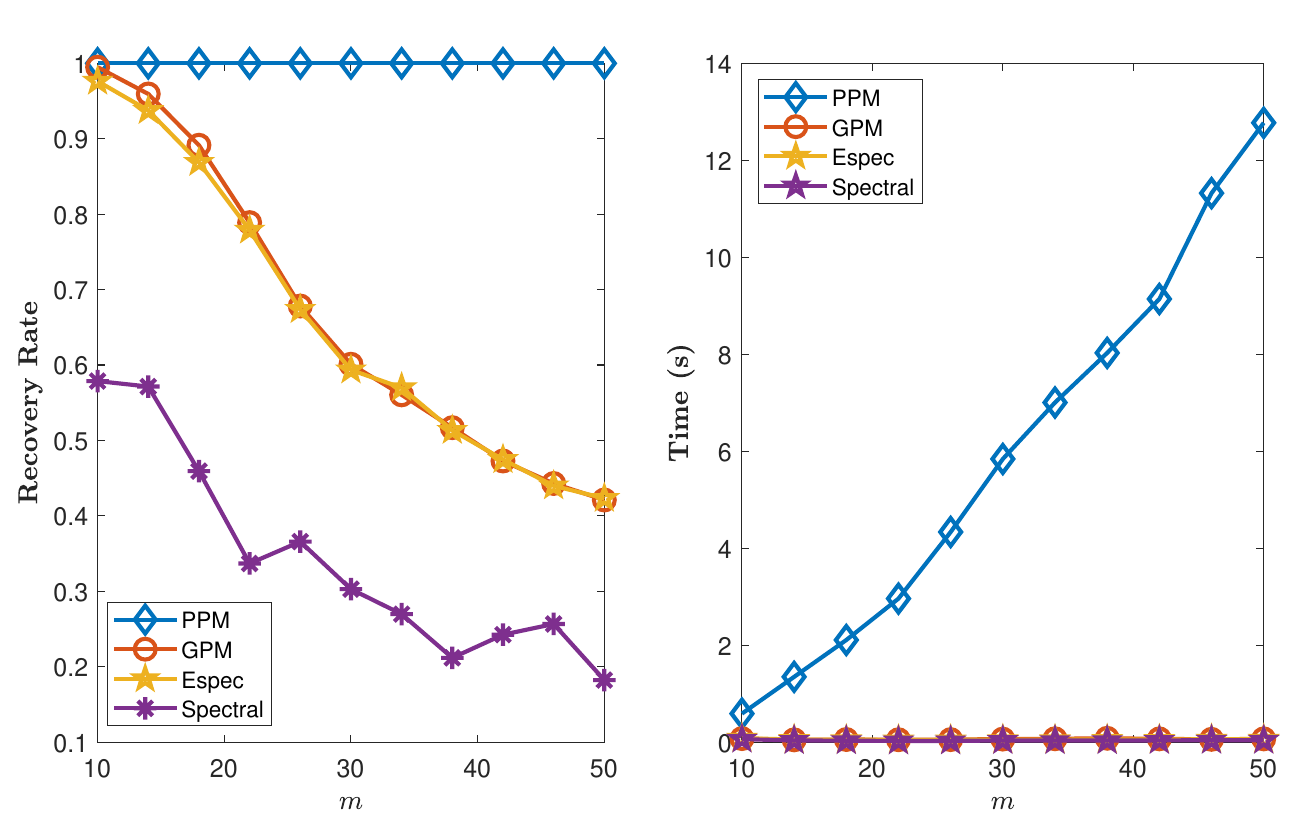}  
		\caption{$n = 500, p = 0.7, q = 0.3$}
		\label{fig:cyclic_sync_85}
	\end{subfigure}
	\hfill
	\begin{subfigure}{.49\textwidth}
		\centering
		\includegraphics[width=1.0\linewidth]{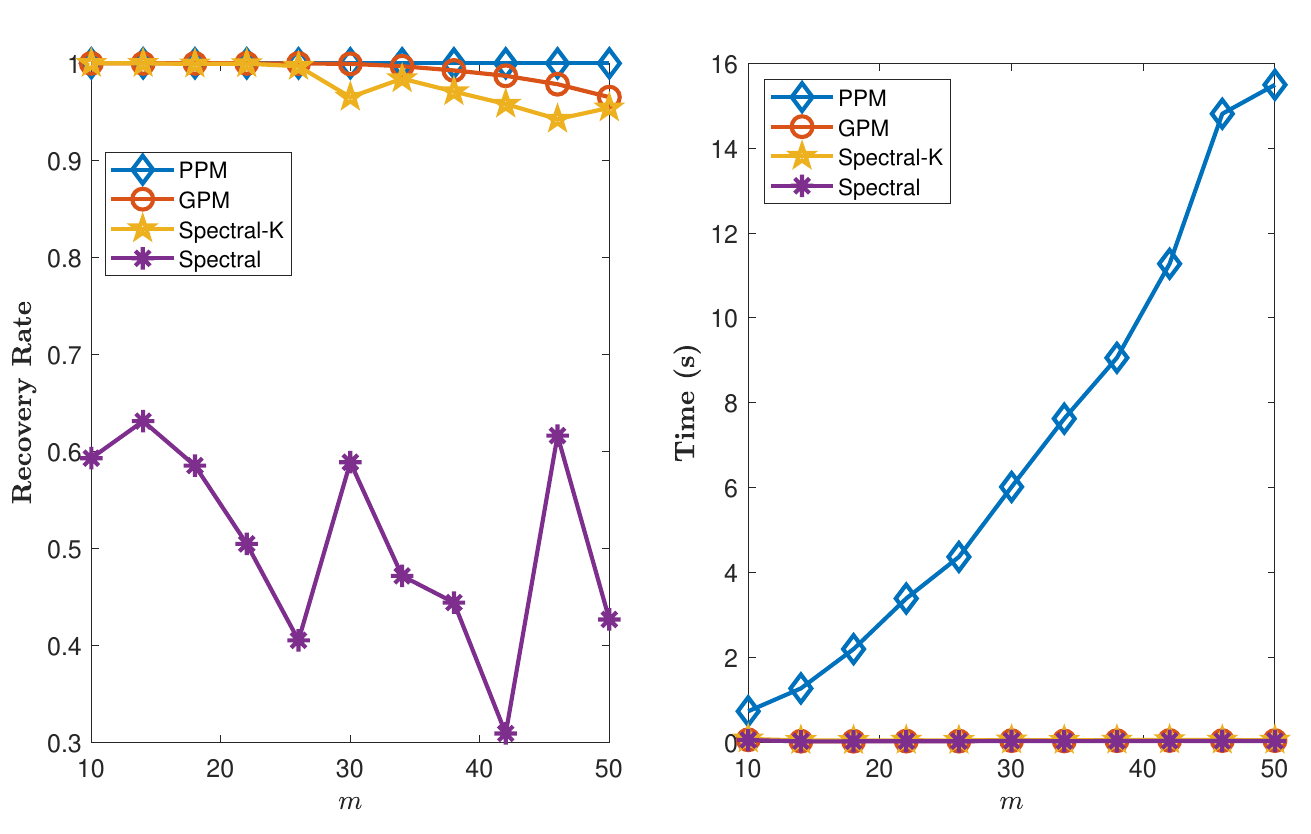}  
		\caption{$n = 500, p = 0.7, q = 0.7$}
		\label{fig:cyclic_sync_88}
	\end{subfigure}
	
	\caption{Recovery rate and computational time of the standard spectral estimator (labeled as Spectral), the proposed entropic spectral estimator with $K=10$ (labeled as Espec), the non-convex approach initialized by the entropic spectral estimator (labeled as GPM),  and the projected power method (labeled as PPM) under the setting $p \in \{0.3, 0.7\}$ and $q \in \{0.3, 0.7\}$.}
	\label{fig:cyclic_sync}
\end{figure} 

We compare the performance of the proposed entropic spectral estimator and GPM-based non-convex approach with that of another non-convex approach named \emph{projected power method}, which is developed in~\cite{chen2018projected}. The standard spectral estimator for $\mathcal{Z}_m$-\sync is once again included in the experiments as a baseline. We focus on how the recovery rate and computational time of these approaches depend on the order $m$ of the cyclic group. The results are plotted in Figure~\ref{fig:cyclic_sync}. All the points in the figure are obtained by averaging over 30 independent random instances.

As Figure \ref{fig:cyclic_sync} shows, our proposed entropic spectral estimator with $K = 10$ significantly outperforms the standard spectral estimator, and the non-convex approach can further improve the recovery rate, albeit by a small margin. The projected power method has the best recovery rate, especially when the group order $m$ is large.
Nevertheless, in terms of computational time, the proposed GPM-based non-convex approach is substantially faster than the projected power method. This is because each step of the projected power method relies on the projection onto an $m$-dimensional simplex, which is essentially an $m$-dimensional linear programming problem, but our method relies on the closed-form projection formula in Proposition~\ref{prop:cyclic_proj}, whose computational cost is independent of $m$.
Therefore, for cyclic synchronization problems, the projected power method is the way to go if recovery performance is the main concern. However, if speed and scalability are of great concern, then our results suggest that the proposed GPM-based non-convex approach would be a better alternative.

\subsection{Necessity and Tightness of Initial Estimation Error Bound}
From Theorem~\ref{thm:master}, a requirement for GPM to enjoy the theoretical guarantee on the estimation error is that the initial estimation error has to be bounded by $\tfrac{\sqrt{n}}{8\alpha}$. We now study the necessity of such a requirement through numerical experiments.

We consider $\mathcal{SO}(d)$-\sync under the same setting as that in Section~\ref{sec:SO(d)-setting} and use the family of initial points $\{G(r)\}_{r\in [0,1]}$ defined by 
\begin{equation*}
[G(r)]_i = \begin{cases}
G^*_i, \hfill & \text{with probability } 1 - r,\\
Q \sim \text{Uniform}(\mathcal{SO}(3)), \hfill & \text{with probability } r
\end{cases}
\end{equation*}
for $i\in[n]$. For simplicity, we introduce the following notation. We denote by $\varepsilon^{r,\infty}$ the normalized estimation error $\frac{\varepsilon (G^\infty) }{\sqrt{nd}}$ of the GPM initialized by $G(r)$. 
In particular, $\varepsilon^{0,\infty}$ is the normalized estimation error $\frac{\varepsilon (G^\infty) }{\sqrt{nd}}$ of the GPM initialized by the ground truth $G^*$. We also denote by $\varepsilon^{r,0}$ the normalized estimation error of the random initialization $G(r)$, \ie, $\varepsilon^{r,0} = \frac{\varepsilon (G(r))}{\sqrt{nd}}$.
Note that the initial error $\varepsilon^{r,0}$ increases as $r$ increases. Therefore, we are interested in how $\varepsilon^{r,\infty}$ scales with $r$ (green line). The results for the cases $(n, d, p, q, \gamma) = (400, 3, 0.4, 0.4, 0.4)$ and $(n, d, p, q, \gamma) = (1000, 3, 0.16, 0.4, 0.4) $ are plotted in Figure~\ref{fig:est_err_1}. To aid intuition, we include two other quantities: The initial error $\varepsilon^{r,0}$ (yellow line) and the error $\varepsilon^{0,\infty}$ of GPM initialized by the ground truth (red line). From Figure~\ref{fig:est_err_1}, we can see that $\varepsilon^{r,\infty}$ coincides with $\varepsilon^{0,\infty}$ for small $r$ and starts to deviate at about $r = 0.5$, which corresponds to an initial error $\varepsilon^{r,0}$ of roughly 0.7. The GPM error $\varepsilon^{r,\infty}$ grows sharply for $r>0.5$. The results suggest that a bound on the initial estimation error is necessary for GPM to enjoy a theoretical guarantee on the final estimation error.

\begin{figure}[t!]
	\centering
	\begin{subfigure}{.49\textwidth}
		\centering
		\includegraphics[width=\linewidth]{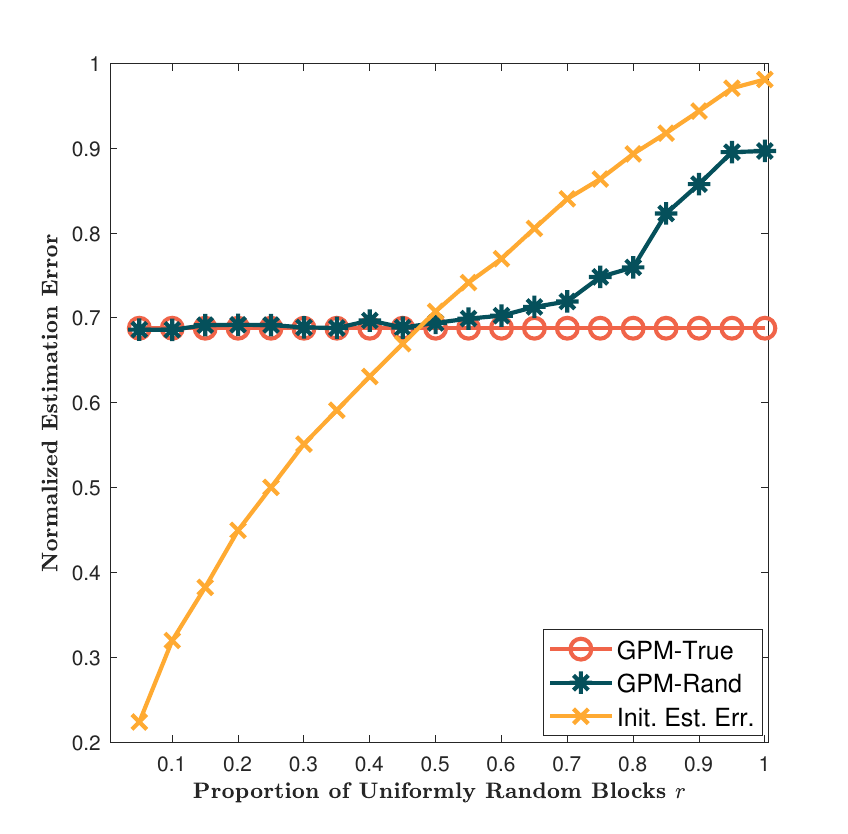}
	\caption{$n= 400, d = 3, p = 0.4, q = 0.4, \gamma = 0.4$}
\label{fig:est_err_1_1} 
\end{subfigure}
	\begin{subfigure}{.49\textwidth}
		\centering		\includegraphics[width=\linewidth]{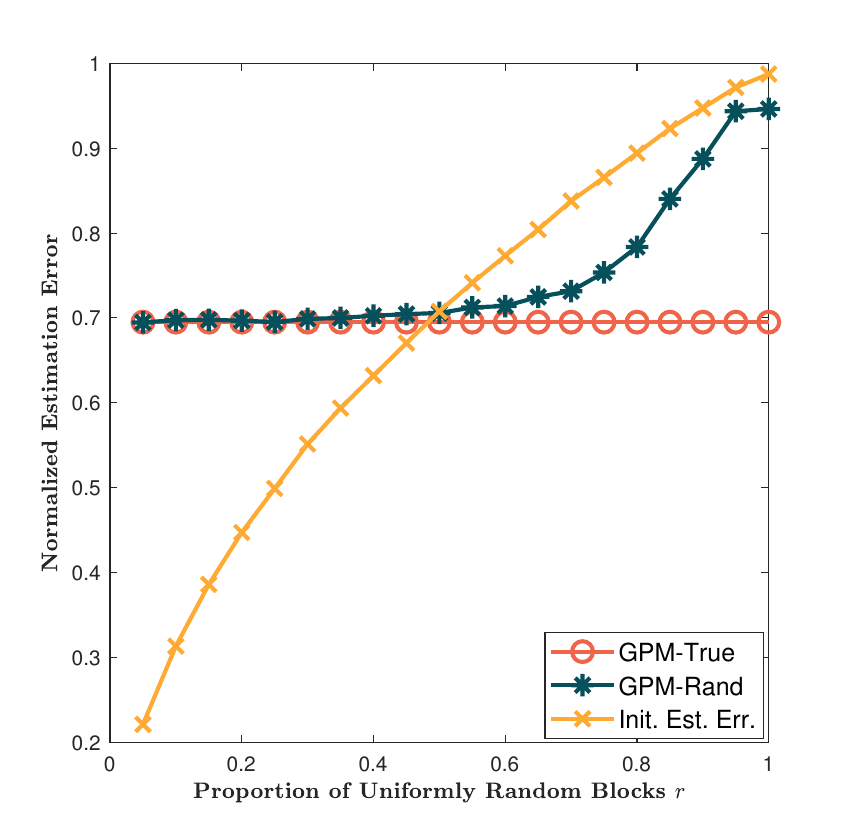} 
		\caption{$n= 1000, d = 3, p = 0.16, q = 0.4, \gamma = 0.4$} \label{fig:est_err_1_2}
	\end{subfigure}	
	\caption{Normalized estimation error $\varepsilon^{r,\infty}$ of GPM initialized by $G(r)$ (labeled as GPM-Rand), normalized estimation error $\varepsilon^{0,\infty}$ of GPM initialized by the ground truth (labeled as GPM-True), and normalized estimation error $\varepsilon^{r,0}$ of random initialization $G(r)$ (labeled as Init. Est. Err.).}
	\label{fig:est_err_1}
\end{figure}

To investigate how tight our requirement $\varepsilon (G^0) \le \frac{\sqrt{n}}{8\alpha}$ is, we repeat the above experiment for different $n$ (but keep the value $pn$ constant) and record the initial error $\varepsilon^{r,0}$ when $\varepsilon^{r,\infty}$ starts to deviate from $\varepsilon^{0,\infty}$. More precisely, we choose $n = 300, 350, \dots, 1000$, $p = \frac{160}{n}$ and record the smallest initial error $\varepsilon^{r,0}$ such that $ \varepsilon^{r, \infty} \ge 1.02\, \varepsilon^{0,\infty} $. The results are plotted in Figure~\ref{fig:init_err_1}, which show that $\varepsilon^{r,0} \approx 0.75$ stays roughly constant across different values of $n$. This empirically confirms the optimality of the requirement $\varepsilon (G^0) \le \frac{\sqrt{n}}{8\alpha}$.

\begin{figure}[t!]
		\centering		\includegraphics[width=0.95\linewidth]{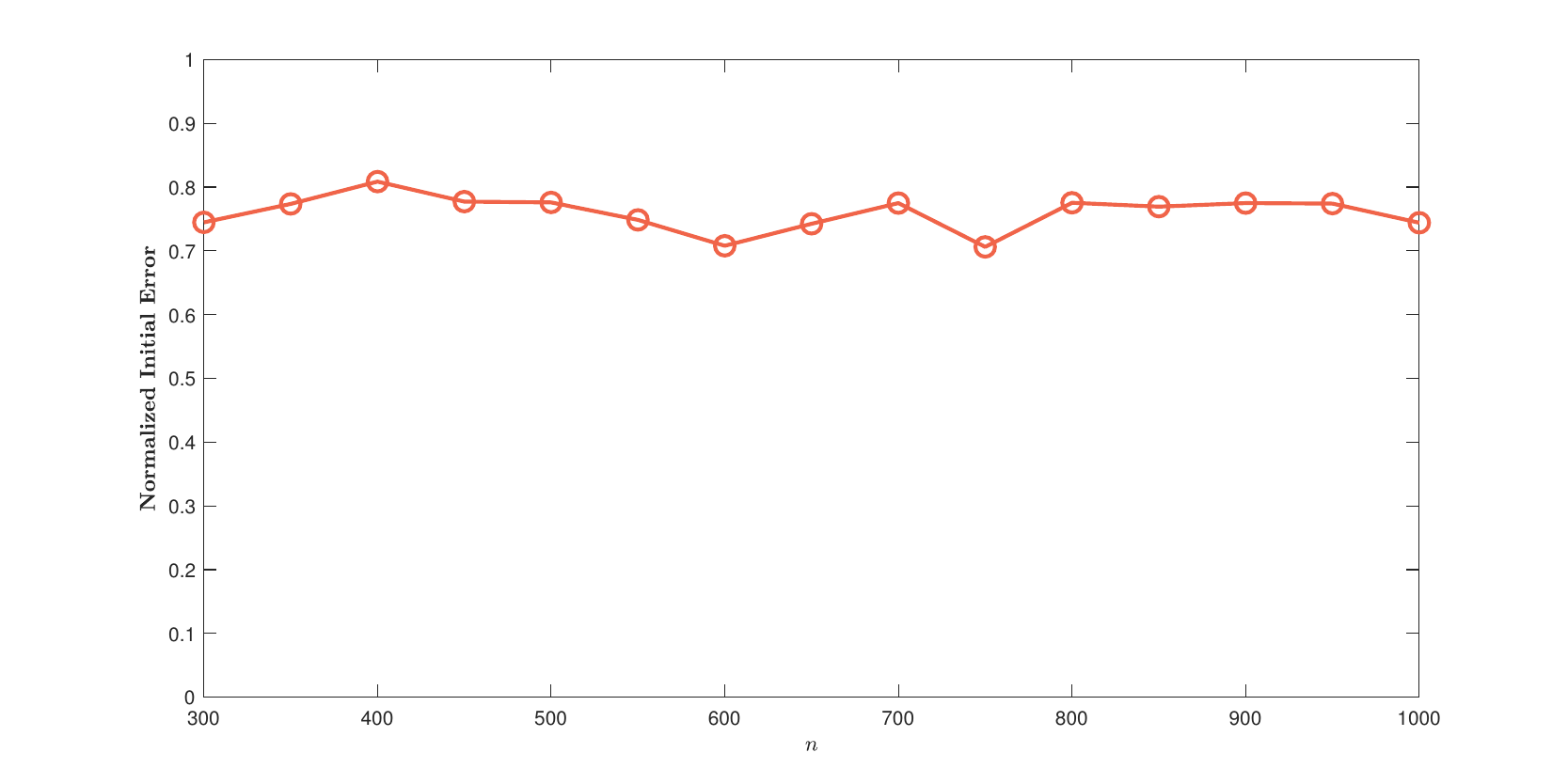} 
	\caption{The normalized initial error $\varepsilon^{r, 0}$ when $ \varepsilon^{r, \infty} \ge 1.02 \varepsilon^{0,\infty} $ happens.}
	\label{fig:init_err_1}
\end{figure}

\subsection{Estimation Error of GPM Initialized by Entropic Spectral Estimator}
Lastly, we empirically study how tightly the noise term $\nm{ \Pi^n \left( G^* + 2 D^{-1} \Delta G^* \right) - G^* }_F$ in Theorem~\ref{thm:master} characterizes the estimation error of GPM. 

Consider $\mathcal{SO}(3)$ under the same setting as that in Section~\ref{sec:SO(d)-setting}. We investigate how the estimation error of GPM (initialized by the entropic spectral estimator) relates to the term $\nm{ \Pi^n \left( G^* + 2 D^{-1} \Delta G^* \right) - G^* }_F$ by varying the noise parameter $\gamma$. Here, we recall that Section~\ref{sec:SO(d)-setting} makes use of the Langevin noise, which is parametrized by $\gamma$. 
The results are plotted in Figure~\ref{fig:est_err_tight_1}. From the figure, we see that the normalized estimation error of GPM coincides with $\nm{ \Pi^n \left( G^* + 2 D^{-1} \Delta G^* \right) - G^* }_F$ when the noise level is low, which corroborates Theorem~\ref{thm:master}. The former starts to deviate from the latter when $\gamma^{-1}$ becomes larger, which corresponds to a higher noise level. A natural guess for the cause of the deviation is that the requirement on $\nm{D^{-1} \Delta}$ in Theorem~\ref{thm:master} is more likely to be violated if $\gamma^{-1}$ becomes larger. Therefore, we also plotted this quantity in Figure~\ref{fig:est_err_tight_1}. If the guess is correct, according to Figure~\ref{fig:est_err_tight_1}, the bound on the term $\nm{D^{-1} \Delta}$ should be approximately $ 0.8$, instead of $\tfrac{1}{32}$ as in Theorem~\ref{thm:master}.
\begin{figure}[t!]
	\centering
	\begin{subfigure}{.48\textwidth}
		\centering
		\includegraphics[width=\linewidth]{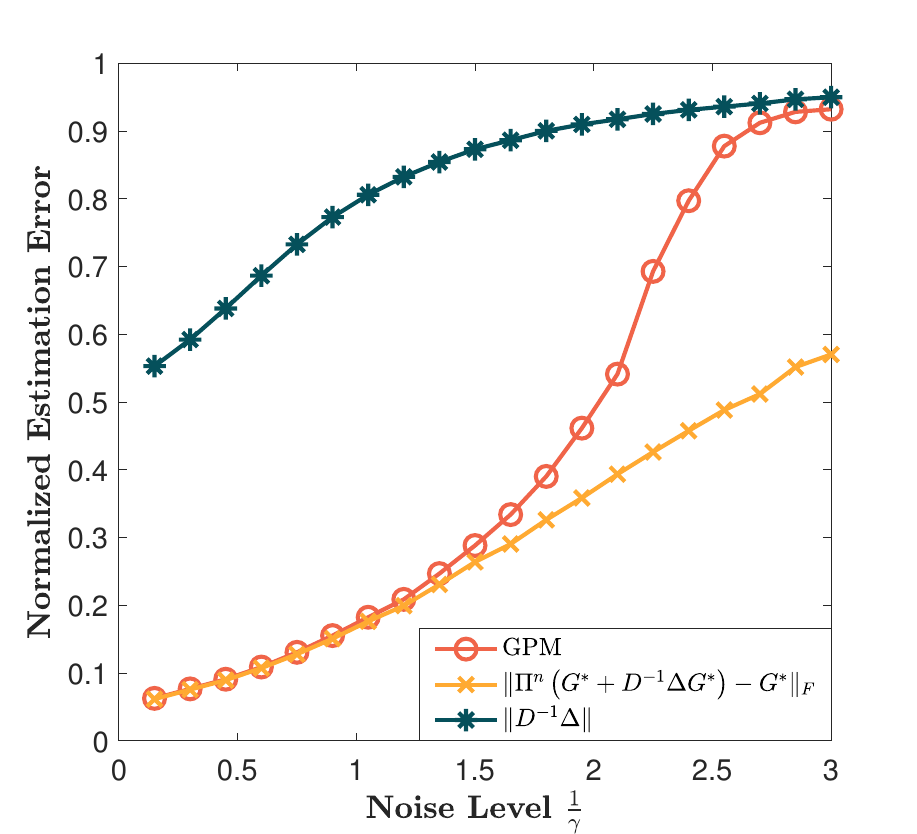}
	\caption{$n= 300, d = 3, p = 0.4, q = 0.5$}
\label{fig:est_err_tight_1_1} 
\end{subfigure}
	\begin{subfigure}{.48\textwidth}
		\centering		\includegraphics[width=\linewidth]{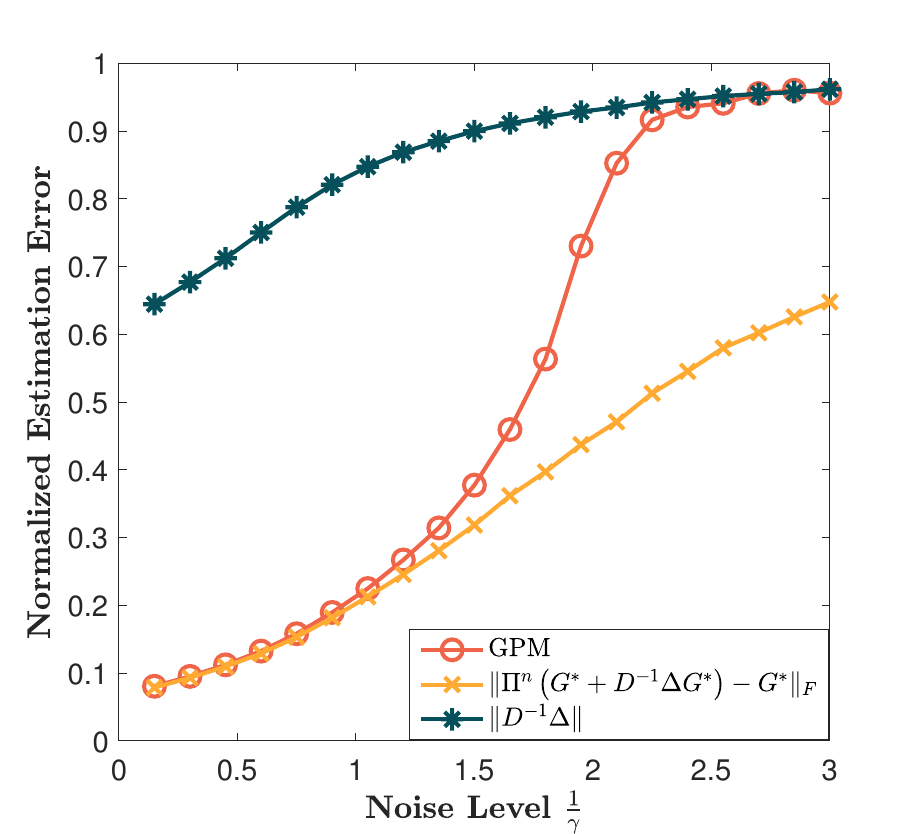} 
		\caption{$n= 400, d = 3, p = 0.4, q = 0.4$} \label{fig:est_err_tight_1_2}
	\end{subfigure}
	\caption{Normalized estimation error of GPM initialized by the entropic spectral estimator (labeled as GPM) and the normalized noise term $\frac{\nm{ \Pi^n \left( G^* + D^{-1} \Delta G^* \right) - G^* }_F }{ \sqrt{2nd}}$.}
	\label{fig:est_err_tight_1}
\end{figure}

Next, we conduct a similar experiment for $\mathcal{P}(d)$-\sync under the same setting as that in Section~\ref{sec:P(d)-setting}. We vary the standard deviation $\sigma$ of the additive Gaussian noise. Also, for the $y$-axis, we use the recovery rate as defined in~\eqref{eq:recovery_rate}. The results are plotted in Figure~\ref{fig:noise}. We observe a similar phenomenon as that in the experiment for $\mathcal{SO}(3)$. 

\begin{figure}[t!]
	\centering
	\begin{subfigure}{.49\textwidth}
		\centering
		\includegraphics[width=\linewidth]{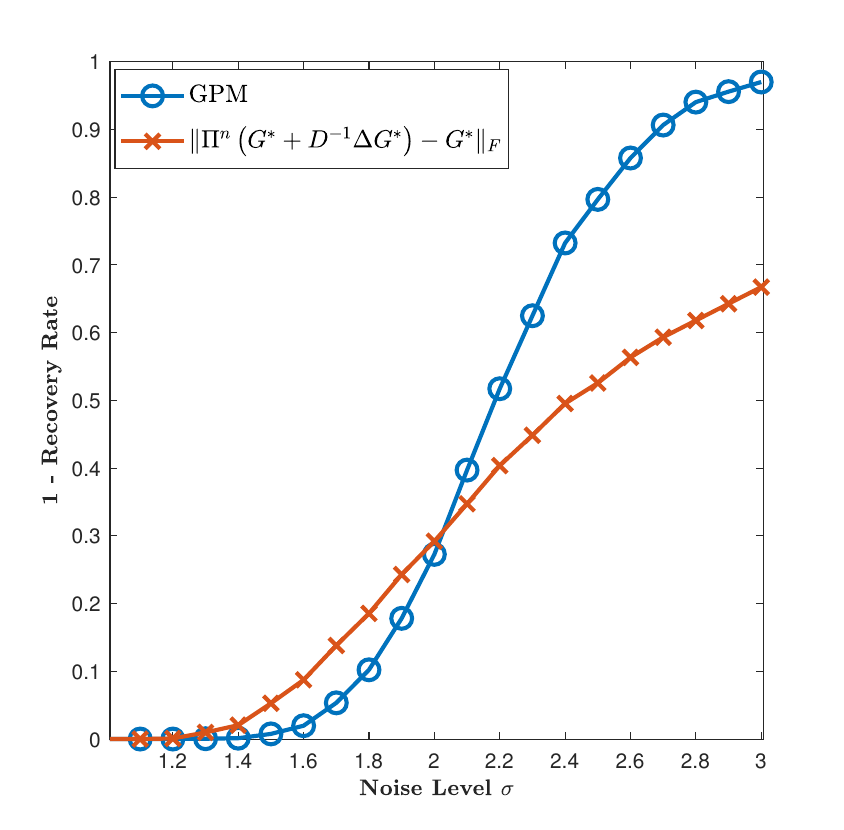}
	\caption{$n= 300, d = 12, p = 0.5, q = 0.7$}
\label{fig:noise_1} 
\end{subfigure}
	\hfill
	\begin{subfigure}{.48\textwidth}
		\centering		\includegraphics[width=\linewidth]{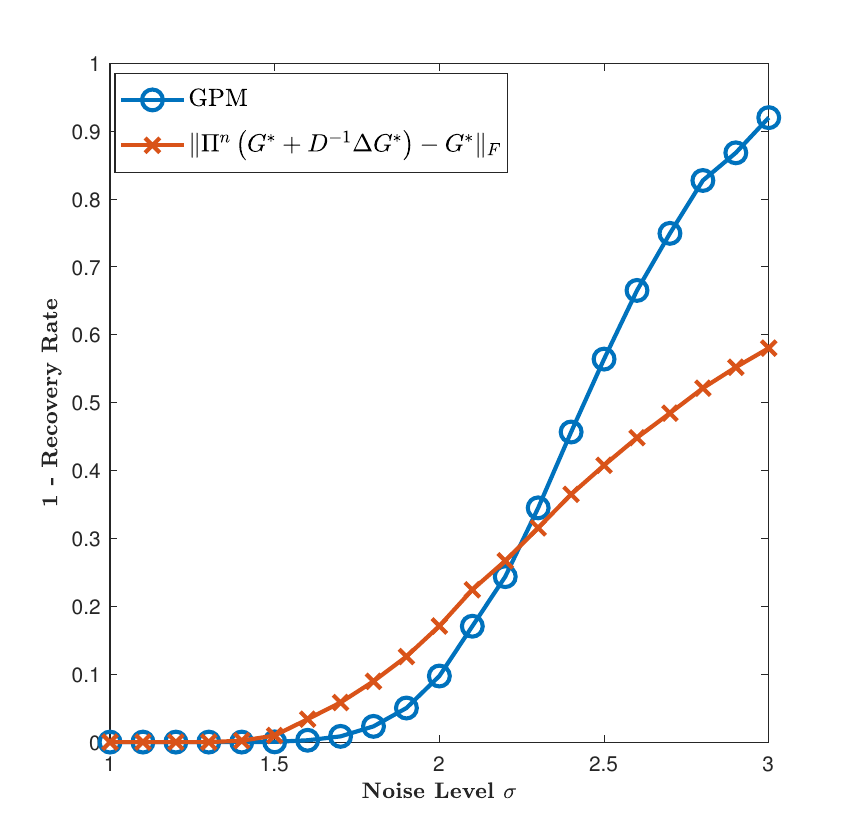} 
		\caption{$n= 400, d = 12, p = 0.5, q = 0.7$} \label{fig:noise_2}
	\end{subfigure}	
	\caption{Recovery error rate of GPM initialized by the entropic spectral estimator (labeled as GPM) and the normalized noise term $\frac{\nm{ \Pi^n \left( G^* + D^{-1} \Delta G^* \right) - G^* }_F }{ \sqrt{2nd}}$.}
	\label{fig:noise}
\end{figure}

The findings in the above two experiments suggest that the estimation error of GPM is fairly accurately predicted by Theorem~\ref{thm:master}, at least before the noise level reaches a certain threshold. 

\section{Conclusion}\label{sec:conclusion}
In this paper, we proposed a unified approach for tackling a class of synchronization problems over closed subgroups of the orthogonal group. The approach consists of a suitable initialization step and an iterative refinement step based on GPM. 
We then proved a master theorem, which shows that the estimation error of the iterates produced by GPM decreases geometrically under certain assumptions on the subgroup, measurement graph, noise, and initialization. We verified these assumptions for various practically relevant subgroups under standard random measurement graph and noise models. In the process, we formulated two conditions concerning the geometry of subgroups of the orthogonal group and developed a novel spectral-type estimator called the entropic spectral estimator based on the notion of metric entropy. These can be of independent interest. Our experiment results showed that the proposed approach outperforms existing approaches in terms of computational speed, scalability, and/or estimation error.

Besides Conjecture~\ref{conj:group}, there are two other research questions concerning non-convex approaches for solving group synchronization problems that are worth investigating. First, although the theory in Section~\ref{sec:noise} covers only additive noise models, our experiments showed that the proposed approach is also effective under various multiplicative noise models. Thus, it would be interesting to see whether our analysis can be extended to cover more general noise models. Second, an important example of group synchronization problems is $SE(d)$-\sync~\cite{rosen2016se}, where $SE(d)$ is the group of $d$-dimensional Euclidean motions. Such a problem arises in areas such as robotics and computer vision. The group $SE(d)$ is not a subgroup of the orthogonal group. It would be interesting to develop a non-convex approach similar to ours for solving $SE(d)$-\sync. 

\section*{Acknowledgment}
We thank Mihai Cucuringu, Yin-Tat Lee and Michael Kwok-Po Ng for helpful discussions and Amit Singer for kindly sharing with us their codes for the experiments. Man-Chung Yue is supported by the Hong Kong Research Grants Council (RGC) under the Early Career Scheme (ECS) project~25302420. Anthony Man-Cho So is supported in part by the Hong Kong Research Grants Council (RGC) General Research Fund (GRF) Project CUHK 14205421.

\bibliographystyle{abbrv}
\bibliography{refs}
\end{document}